\long\def\unmarkedfootnote#1{{\long\def\@makefntext##1{##1}\footnotetext{#1}}}
\theoremstyle{plain}
\newtheorem{theorem}{Theorem}[section]
\newtheorem{lemma}[theorem]{Lemma}
\newtheorem{prop}[theorem]{Proposition}
\newtheorem{remark}[theorem]{Remark}
\def\rn{\mathbb R\sp n}
\def\Rn{\mathbb R\sp n}
\def\R{\mathbb R}
\def\N{\mathbb N}
\def\M{\mathcal M}
\def\o{\Omega}
\def\Mpl{\mathcal M_+}
\def\lt{\left}
\def\rt{\right}
\newtoks\by
\newtoks\paper
\newtoks\book
\newtoks\jour
\newtoks\yr
\newtoks\pages
\newtoks\vol
\newtoks\publ
\newtoks\eds
\newtoks\proc
\newtoks\no
\def\ota{{\hbox{???}}}
\def\cLear{\by=\ota\paper=\ota\book=\ota\jour=\ota\yr=\ota
\pages=\ota\vol=\ota\publ=\ota}
\def\endpaper{\the\by, \textit{\the\paper},
{\the\jour} \textbf{\the\vol} (\the\yr), \the\pages.\cLear}
\def\endbook{\the\by, \textit{\the\book}, \the\publ.\cLear}
\def\endprep{\the\by, \textit{\the\paper}, \the\jour.\cLear}
\def\endproc{\the\by, \textit{\the\paper}, \the\publ, \the\pages.\cLear}
\def\name#1#2{#1 #2}
\def\et{ and }
\numberwithin{equation}{section}
\newcommand{\norm}[1]{{\left\vert\kern-0.25ex\left\vert\kern-0.25ex\left\vert #1
    \right\vert\kern-0.25ex\right\vert\kern-0.25ex\right\vert}}
\title[Sobolev embeddings, rearrangement-invariant spaces and Frostman measures]{Sobolev embeddings, rearrangement-invariant spaces \\ and Frostman measures
} \frenchspacing
\numberwithin{equation}{section}
\author{
  Andrea Cianchi,  Lubo\v s Pick, Lenka Slav\'ikov\'a}
\address{
 \it Dipartimento di Matematica e Informatica \lq\lq U. Dini", Universit\`a di Firenze, \it Viale Morgagni 67/A, 50134 Firenze, Italy, e-mail: cianchi@unifi.it}
\address{\it  Department of Mathematical Analysis, Faculty of Mathematics and
Physics,  Charles University, Sokolovsk\'a~83,
186~75 Praha~8, Czech Republic, email: pick@karlin.mff.cuni.cz}
\address{\it Department of Mathematics,
University of Missouri, Columbia, MO 65211,
USA, and Department of Mathematical Analysis, Faculty of Mathematics and
Physics,  Charles University, Sokolovsk\'a~83,
186~75 Praha~8, Czech Republic, \newline
email: slavikova@karlin.mff.cuni.cz}
\date{}
\begin{document}

%
%

\maketitle

\begin{equation*}
\hbox{{\bf Abstract}} \end{equation*}
Sobolev embeddings, of arbitrary order, are considered into function spaces on domains of $\rn$ endowed with measures whose  decay on balls  is dominated by a  power $d$ of their radius. Norms in arbitrary rearrangement-invariant spaces are contemplated. A comprehensive approach is proposed based on the reduction of the relevant $n$-dimensional embeddings to one-dimensional Hardy-type inequalities. Interestingly, the latter inequalities  depend on the involved measure only through the power $d$.
Our results allow for the detection of the optimal  target space in  Sobolev embeddings, for  broad  families of norms, in situations where  customary techniques do not apply. In particular, new embeddings, with augmented  target spaces,  are deduced even for  standard Sobolev spaces.

\medskip
\par\noindent
\begin{equation*}
\hbox{{\bf R\'esum\'e}} \end{equation*}
On consid\`ere des immersions de Sobolev d'ordre quelconque dans des espaces de fonctions sur des domaines de $\rn$ munis des mesures avec une tendance dans les boules qui est domin\'ee par une puissance $d$ du rayon. Des normes dans les espaces arbitraires invariants par r\'earrangements sont permises.
Nous proposons une approche g\'en\'eral bas\'ee sur la r\'eduction des immersions  en dimension $n$  \`a des in\'egalit\'es du type Hardy en dimension un. On souligne que ces in\'egalit\'es d\'ependent de la mesure consid\'er\'ee seulement par le degr\'e de puissance $d$.
Notre r\'esultat permets de d\'etecter l'espace cible optimal dans les immersions de Sobolev, pour une large famille de normes dans des cas o\`u le techniques habituelles ne s'appliquent pas.
En particulier on d\'eduit des nouvelles immersions avec espaces cible augment\'es m\^eme dans le cas d'espace de Sobolev standard.

\unmarkedfootnote {
\par\noindent {\it Mathematics Subject
Classification:} 46E35, 46E30.
\par\noindent {\it Keywords:}  Sobolev inequalities, Frostman measures, Ahlfors regular measures, rearrangement-invariant spaces, Lorentz spaces.
%
%
}

\section{Introduction}\label{intro}

The purpose of the present paper is to offer new reduction principles for an extensive class of Sobolev-type inequalities. The relevant principles enable us to derive   Sobolev inequalities on  $n$-dimensional domains from considerably simpler one-dimensional inequalities for Hardy-type operators. The idea of deducing higher-dimensional  from one-dimensional inequalities, via some sort of symmetrization or rearrangement, is a classical one and has been implemented in a number of questions in diverse fields, such as partial differential equations, theory of function spaces, harmonic analysis. None of the available results is however apt at proving sharp Sobolev embeddings  involving  general norms and measures, also known as trace embeddings in the literature, as those considered by our approach. It can be exploited to  establish original Sobolev inequalities, with optimal (i.e. strongest) target norms, in   diverse classes of function spaces, including Lorentz and Orlicz spaces. In order to maintain our discussion  within a reasonable length, here we limit ourselves to applying our comprehensive results to trace  embeddings for the  standard Sobolev spaces. As a consequence,  the  optimal targets in these embeddings among all rearrangement-invariant spaces with respect to  Ahlfors regular measures are exhibited. This improves classical results by Adams
and by Maz'ya,
 and provides us with a version with measures of  the Br\'ezis-Wainger inequality
  for the limiting case.
 The spin-off for specific Sobolev embeddings embracing other families of function norms will be presented in \cite{CPS_appl}.
\par
The main advances of our contribution are highlighted below. A preliminary  brief review of the status of the art in the area is provided to help grasp their novelty.

\medskip

\par\noindent {\bf First-order Sobolev inequalities with vanishing boundary conditions}. \,
A powerful idea in the proof of a variety of first-order Sobolev inequalities for functions vanishing on the boundary of their domain $\o \subset \rn$, with $n \geq 2$, is that they can be deduced from  their analogues for radially symmetric functions on  balls, via symmetrization. As a consequence, the original inequalities are converted into  parallel  one-dimensional Hardy-type inequalities. This approach has its modern roots in the work of  Moser \cite{Moser}, Aubin \cite{Aubin} and Talenti \cite{Ta}, and has been successfully exploited  in a number of subsequent  contributions. The underlying symmetrization technique rests upon the fact that Lebesgue and, more generally, rearrangement-invariant norms of a function vanishing on the boundary of its domain are invariant under replacement with its radially decreasing symmetral, and that gradient norms from the same class do not increase under the same replacement. The latter property is known as P\'olya-Szeg\"o inequality, and is, in its turn, a consequence of the classical isoperimetric inequality in $\rn$. The close connection between isoperimetric and Sobolev inequalities in quite general settings was discovered in the early sixties of the last century in the seminal researches by Maz'ya \cite{Ma1960}, who also elucidated the equivalence between isocapacitary and Sobolev inequalites \cite{Ma1961}. Special cases were also pointed out by Federer-Fleming \cite{FF}.
Related reduction principles can be found in \cite{CR-1, CR-2}.
\par
An inclusive version of this reduction principle can be stated as follows. Assume that $\o$ has  finite   Lebesgue measure and that $X(\o)$ and $Y(\o)$ are any Banach function spaces endowed with rearrangement-invariant norms, briefly called
  rearrangement-invariant spaces -  Lebesgue, Orlicz or Lorentz spaces, for instance.
 Then
 the Sobolev inequality
\begin{equation}
\label{classical0}
\|u\|_{Y(\o)} \leq C_1 \|\nabla u\|_{X(\o)}
\end{equation}
holds for some constant $C_1$ and for every  function $u \in W^1_0X(\o)$, provided that the  Hardy-type inequality
\begin{equation}
\label{hardy0}
\bigg\|\int _t^1f(s) s^{-1+\frac 1n}\, ds\bigg\|_{Y(0,1)} \leq C_2 \|f\|_{X(0,1)}
\end{equation}
holds for some constant $C_2$ and for every function $f \in X(0,1)$. Here, $W^1_0X(\o)$
denotes
the Sobolev space of those weakly differentiable functions in $\o$, vanishing in a suitable sense on $\partial \o$, such that $|\nabla u|\in X(\o)$, and $X(0,1)$ and $Y(0,1)$ stand for the one-dimensional representation spaces of $X(\o)$ and $Y(\o)$.
 The converse is also true, and hence  inequalities \eqref{classical0} and \eqref{hardy0} are actually equivalent.  The constants $C_1$ and $C_2$ appearing  in these inequalities only depend on each other and on $n$ and $|\o|$, the Lebesgue measure of $\o$. This reduction principle is a key step in the characterization of the optimal target space $Y(\o)$ for a prescribed space $X(\o)$ in inequality \eqref{classical0} \cite{CPS, KP}.

\medskip

\par\noindent {\bf First-order Sobolev inequalities without boundary   conditions}. \,
Assume that $\Omega$ is bounded. The P\'olya-Szeg\"o inequality fails for functions which need not vanish on $\partial \o$. Hence, symmetrization cannot be applied to derive Sobolev-type inequalities for functions, with arbitrary boundary values, that are just required to belong to $W^1X(\o)$, the Sobolev space   of those weakly differentiable functions in $\o$ that, together with their gradient, belong to $X(\o)$. However, inequality \eqref{classical0}, with the norm $\|\nabla u\|_{X(\o)}$ replaced with the full norm $\|u\|_{W^1X(\o)}$, and inequality \eqref{hardy0} are easily seen to be still equivalent, if the domain $\o$ is regular enough, just by an extension argument. Interestingly, a version of this reduction principle holds in more general domains $\o$, with the power weight $r^{-1+\frac 1n}$ in \eqref{hardy0} replaced with the reciprocal of another function, depending on $\o$, and called the isoperimetric function, or isoperimetric profile, of $\Omega$ in the literature. This function, denoted by $I_\o$, appears in the relative isoperimetric inequality in $\o$.  Loosely speaking, $I_\o : (0,1) \to [0, \infty)$ is an increasing function, whose value at $s \in (0, \tfrac 12)$ agrees with the infimum of the perimeter relative to $\o$ among all   subsets of $\o$  whose measure lies in $[s|\o|, \tfrac 12 |\o|]$.
Under some mild assumptions on the decay of $I_\o$ near $0$,  the Sobolev inequality
\begin{equation}
\label{classical}
\|u\|_{Y(\o)} \leq C_1 \|u\|_{W^1X(\o)}
\end{equation}
holds for some constant $C_1$ and for every  function $u \in W^1X(\o)$ if  the  Hardy-type inequality
\begin{equation}
\label{hardyI}
\bigg\|\int _t^1f(s) \frac{ds}{I_\o(s)}\, \bigg\|_{Y(0,1)} \leq C_2 \|f\|_{X(0,1)}
\end{equation}
holds for some constant $C_2$ and for every function $f \in X(0,1)$ \cite{CPS}. The  converse implication depends on the geometry of $\o$, and, loosely speaking, it holds when $\o$ is not too  irregular, a property that is reflected in a sufficiently mild decay of $I_\o$ at $0$.

\medskip


\par\noindent {\bf Higher-order Sobolev inequalities}. \,
The situation is even more delicate when higher-order Sobolev inequalities are in question, namely when we are dealing with  functions endowed with weak derivatives up to some order $m \in \N$ which belong to some rearrangement-invariant space $X(\o)$, a space of functions that will be denoted by
$W^mX(\o)$.  Indeed, if $m>1$, symmetrization methods fail yet in the subspace $W^m_0X(\o)$ of $W^mX(\o)$ of those   functions vanishing, together with their derivatives up to the order $m-1$, on $\partial \o$ (see \cite{Cianchi-Duke, Cianchi_AMPA} for  partial results in this connection when $m=2$).
\par
In \cite{CPS} a sharp iteration method has been developed to derive a  reduction principle for arbitrary-order Sobolev inequalities from the first-order one mentioned above. In case of inequalities involving functions with unrestricted boundary values, namely inequalities of the form
\begin{equation}
\label{classicalm}
\|u\|_{Y(\o)} \leq C_1\|u\|_{W^mX(\o)}
\end{equation}
for
$u \in W^mX(\o)$,  the  relevant reduction principle is driven by a Hardy-type operator which, in general,   involves  a kernel built upon $I_\o$, and not just   a weight function. Specifically, the  resultant Hardy inequality takes the form
\begin{equation}
\label{hardym-kernel}
\bigg\|\int_t\sp1\frac{f(s)}{I_\o(s)}\left(\int_t\sp
s\frac{dr}{I_\o(r)}\right)\sp{m-1}\,ds\bigg\|_{Y(0,1)} \leq
C_2\|f\|_{X(0,1)}\,
\end{equation}
for $f \in X(0,1)$.
In fact, the result of \cite{CPS} applies to a broad class of Sobolev-type inequalities, where the norms on both sides of \eqref{classicalm}  and the isoperimetric function $I_\o$ are taken with respect to measures with a density, i.e. measures which are absolutely continuous with respect to Lebesgue measure.
%
Under an additional assumption on the behavior of $I_\o$ near $0$, the kernel
in the integral operator on the left-hand side of \eqref{hardym-kernel}
 can be replaced by the function $\frac {s^{m-1}}{I_\o(s)^m}$
of the sole variable $s$.
In particular, when $\o$ is  regular enough -- a bounded Lipschitz domain, for instance --  and equipped with Lebesgue measure, one has that $I_\o(s)$ decays like $ s^{1- \frac{1}{n}}$ as $s\to 0^+$, and
 inequality  \eqref{classicalm}
holds for some constant $C_1$ and for every $u \in W^mX(\o)$, if (and only if) the  Hardy-type inequality
\begin{equation}
\label{hardym}
\bigg\|\int _t^1f(s) s^{-1+\frac mn}\, ds\bigg\|_{Y(0,1)} \leq C_2 \|f\|_{X(0,1)}
\end{equation}
holds for some constant $C_2$ and every $f \in X(0,1)$. The same characterization applies to any domain $\o$ with $|\o|<\infty$, provided that the space $W^mX(\o)$ is replaced with
$W^m_0X(\o)$. This special case was earlier obtained, via a different approach, in \cite{KP}.
\\ Reduction principles for  Sobolev type inequalities, for different kinds of norms or domains, are the subject of \cite{ACPS, GMNO, GNO, Holik}. Compactness of Sobolev embeddings is characterized via reduction principles in \cite{CM, Slav}.

\medskip

\par\noindent {\bf New results}. \,
In the present paper we abandon the point of view of linking Sobolev to isoperimetric inequalities, and  pursue the approach to a wider family of Sobolev inequalities, via reduction principles, from a different perspective. A basic version of the inequalities that will be considered, called Sobolev trace inequalities in a broad sense,  has the form
\begin{equation*}
\|u\|_{Y(\o, \mu)}\leq C \|\nabla ^mu\|_{X(\o)}\,
\end{equation*}
for some constant $C$ and every $u \in W^m_0X(\o)$. Here, $\Omega$ is an  open set in $\Rn$, with $|\o|<\infty$ and $Y(\o, \mu)$ and $X(\o)$ are rearrangement-invariant spaces. The crucial novelty now is that the norm in $Y(\o, \mu)$ is taken with respect to a finite Borel measure $\mu$ on $\o$ under the sole piece of information that
\begin{equation}\label{E:sup}
\sup_{x\in \Rn, r>0} \frac{\mu(B_r(x)\cap {\Omega})}{r^d} <\infty
\end{equation}
for some $d \in (0,n]$, where
$B_r(x)$ denotes the ball centered at $x$, with radius $r$. This classical class of measures, called $d$-Frostman measures or $d$-upper Ahlfors regular measures in the literature, enters in a number of questions in measure theory, harmonic analysis, theory of function spaces. This is in fact the class of measures for which standard Sobolev trace embedding theorems by Adams \cite{Adams2,Adams3} and Maz'ya \cite{Mazya543,Mazya548} are established.
Plainly, condition \eqref{E:sup} is fulfilled with $d=n$ if $\mu$ is Lebesgue measure.
\par
Inequalities for functions with unrestricted boundary values, namely inequalities of the form
$$\|u\|_{Y(\o, \mu)} \leq  C \|u\|_{W^mX(\o)}\,$$
for some constant $C$ and every $u \in W^mX(\o)$, are also considered under the same decay assumption on $\mu$ and under suitable regularity assumptions on a bounded domain $\o$.
\par
Our discussion  encompasses Sobolev-type inequalities up to the boundary that read
$$\|u\|_{Y(\overline \o, \mu)}\leq C  \|u\|_{W^mX(\o)}\,$$
for some constant $C$ and every $u \in W^mX(\o)$, and  measures $\mu$, whose support is contained in the whole of $\overline \o$,  satisfying the corresponding condition
\begin{equation}\label{E:supbar}
\sup_{x\in \Rn, r>0} \frac{\mu(B_r(x)\cap \overline{\Omega})}{r^d} <\infty
\end{equation}
for some $d \in (0,n]$. In the case when the measure $\mu$ is supported in $\partial \o$, these inequalities embrace, as special instances, standard boundary trace inequalities and weighted  inequalities on the boundary.
\par
An important trait of our results is that the one-dimensional operators coming into play in the pertaining reduction principles only depend on $n$, $m$ and $d$. In particular, when $\mu$ is Lebesgue measure (or just $d=n$),  the results recalled above are recovered. Trace inequalities over the intersection of $\o$ with $d$-dimensional affine subspaces of $\rn$, established in \cite{CP-Trans} for $d$ in a certain range, are  reproduced as well.
\par
The results to be presented are new even for first-order Sobolev spaces. However, even with the first-order case at disposal, an iteration process as in \cite{CP-Trans, CPS} would not apply to carry it over to the higher-order case. We have thus to resort to a strategy based upon sharp endpoint inequalities, coupled with  $K$-functional techniques from interpolation theory, which is reminiscent of arguments  from \cite{CKP, KP}. Additional difficulties yet arise in the present setting.
\par
To begin with,  whereas
a trace operator  with respect to the measure $\mu$  is well defined on any Sobolev space $W^mX(\o)$,  if $d \in [n-m, n]$, whatever $X(\o)$ is, this need not be the case when $d\in (0, n-m)$. For $d$ in the latter range, the norm in $X(\o)$ has to be strong enough, depending on   $n$, $m$ and $d$, for a trace to exist. This leads to  reduction principles with different features in the two cases, that we shall refer to as \lq\lq fast decaying measures" ($d \in [n-m, n]$) and \lq\lq slowly decaying measures" ($d\in (0, n-m)$).
\par
One of the endpoint estimates needed in
the fast decay regime was missing, and it is established in this paper as a result of independent interest. The corresponding  endpoint estimate in  the slow decay regime follows from a recent result from \cite{KK}, the special case of traces on subspaces having
 earlier been dispensed in  \cite{CP2010}. On the other hand, the piece of information that can be extracted from the interpolation method is less transparent in this case, and the identification of the correct class of admissible rearrangement-invariant norms in the inequalities requires an ad hoc construction.
\par
A further distinction between the two rates of decay of the measures arises in connection with the necessity of the one-dimensional inequalities playing a role in the reduction principles. The one-dimensional inequality to be exhibited for fast decaying measures is an exact extension of \eqref{hardym}, and  is always necessary, in analogy with the case of Sobolev inequalities with Lebesgue measure, provided that $\mu$ also admits a lower bound with the same exponent $d$ at least at one point.
The reduction principle for slowly decaying measures requires some additional one-dimensional inequality. Moreover, the pair of one-dimensional inequalities prescribed in this case cannot be necessary
in absence of extra information on the measure.
\par Indeed, a major outcome of our analysis is a discussion of the existence of a necessary and sufficient reduction principle enabling one to characterize the optimal target space $Y(\o, \mu)$, or $Y(\overline \o, \mu)$, in the inequalities displayed above. This characterization is actually provided for fast decaying mesures.
By contrast,  in the slow decaying regime  examples of Sobolev inequalties will be produced where the measures have the same exact power type decay   on balls, but the optimal target spaces are different.
\par Prototypical  examples of $d$-Frostman measures,  exhibiting extremal features with regard to   Sobolev trace embeddings, are the measure $\mu_1$, defined as
\begin{equation}\label{mu1}
d\mu_1(x)=\frac{dx}{|x-x_0|^{n-d}}
\end{equation}
for some $x_0 \in \o$, and, when
$d \in \N$, the measure $\mu_2$ given by
\begin{equation}\label{mu2} \mu_2= \mathcal H^{d}|_{\Omega_d},
\end{equation}
where $\Omega _d$ denotes the intersection of $\Omega$ with a $d$-dimensional affine subspace of $\mathbb R^n$.
\par The measure $\mu_2$ is, in view of Sobolev inequalities, the worst possible measure fulfilling condition \eqref{E:sup} for some given $d$. In fact,
Sobolev inequalities associated with $\mu_2$ typically admit the weakest target norms among all measures fulfilling condtion \eqref{E:sup}  for some integer $d\in (0, n-m)$. They correspond to traces in the classical sense of   \lq\lq restrictions" to hyperplanes. The measure $\mu_2$   will indeed be called into play to prove the sharpness of one of our endpoint results.
\par On the other hand, the measure $\mu_1$ is a distinguished member of the special class of $d$-Frostman measures that admit a radially decreasing density  with respect to Lebesgue measure. Measures from this class will be
 shown to allow
 for stronger optimal target norms than those associated with an arbitrary $d$-Frostman measure in certain Sobolev embeddings for the space $W^mX(\o)$, when $d \in (0,n-m)$.
 Sobolev inequalities with a weight as in \eqref{mu1} have been extensively studied in the literature. For instance, when $X(\o)=L^p(\o)$, $Y(\o,\mu)=L^q(\o, \mu_1)$ and $m=1$, the weighted Sobolev inequality in question coincides with a special instance of the Caffarelli-Kohn-Nirenberg inequality, that has attracted the interest of researchers over the years in connection with the existence and description of its extremals if $\o= \rn$ -- see e.g. \cite{ABCMP, CW, DEFT, DEL, FS}.
\par As mentioned above,  just to give the flavor of the conclusions that can be reached by our approach, it will be implemented   to exhibit the optimal rearrangement-invariant target norm $Y(\o, \mu)$, for an arbitrary $d$-Frostman measure $\mu$, in the inequality
\begin{equation}\label{sobintro}
\|u\|_{Y(\o, \mu)} \leq C \|\nabla\sp{m} u\|_{L^p(\o)}
\end{equation}
for $u \in W^{m,p}_0(\o)$. Parallel inequalities with $W^{m,p}_0(\o)$ replaced by  $W^{m,p}(\o)$, or $\o$ replaced by $\overline \o$, are considered as well.
Analogous problems, where the $L^p(\o)$ norm  on the right-hand side of inequality \eqref{sobintro} is replaced by a more general Orlicz or Lorentz norm, entail additional new tools in dealing with the associated one-dimensional inequalities, and  are treated  in the separate contribution  \cite{CPS_appl}.

%
%


\section{Background}\label{back}

Throughout the paper, the relation $\lq\lq \lesssim "$ between two positive expressions means that the former is bounded by the latter, up to a multiplicative constant depending on quantities to be specified.
The relation $\lq\lq \approx "$  between two expressions means that   they are bounded by each
other  up to multiplicative constants depending on quantities to be specified.
\par
Let $(\mathcal R,\nu)$ be a $\sigma$-finite non-atomic measure space. We denote by
$\M(\mathcal R , \nu)$
 the set of all $\nu$-measurable functions
on~$\mathcal R$ taking values in $[-\infty,\infty]$. We
also define $\Mpl(\mathcal R,\nu)=\{\phi \in\M(\mathcal R,\nu)\colon \phi \geq 0 \
\nu\textup{-a.e. on}\ \mathcal R\}$ and
$\M_0(\mathcal R,\nu)=\{\phi \in\M(\mathcal R,\nu)\colon \phi\ \textup{is finite}\
\nu\textup{-a.e. on}\ \mathcal R\}$. When $\mathcal R\subset \rn$ and $\nu$ is Lebesgue measure, we denote $\M(\mathcal R , \nu)$ simply by $\M(\mathcal R)$, and similarly for $\Mpl(\mathcal R,\nu)$ and $\M_0(\mathcal R,\nu)$.

Given a~function $\phi \in \M(\mathcal R , \nu)$, its \textit{non-increasing
rearrangement} $\phi\sp*_\nu : [0, \infty ) \to [0, \infty]$  is defined as
$$
\phi\sp*_\nu(t)=\inf\{{\varrho\in\mathbb R}:\,\nu\left(\{x\in \mathcal R
:\,|\phi(x)|>\varrho\}\right)\leq t\}\quad \text{for $t\in [0,\infty)$}.
$$
When $\nu$ is Lebesgue measure, we omit the subscript $\nu$, and just write $\phi^*$ instead of $\phi^*_\nu$.
The \textit{maximal non-increasing rearrangement} of $\phi$, denoted by $\phi^{**} _\nu : (0, \infty) \to [0, \infty ]$, is defined by
$$
\phi^{**}_\nu(t) = \frac{1}{t}\int _0^t \phi^*_\nu(s)\, ds \quad \text{for $t\in (0,\infty)$}.
$$
The function $\phi^{**}_\nu$ is also non-increasing and  $\phi^{*}_\nu(t) \leq \phi^{**}_\nu(t)$   for every $t\in (0, \infty)$. The operation $\phi\mapsto \phi\sp{**}_\nu$ is subadditive in the sense that
\begin{equation}\label{subadd}
\int_0\sp t(\phi + \psi)^{*}_\nu(s)\,ds \leq \int_0\sp t \phi^{*}_\nu(s)\,ds +
\int_0\sp t \psi ^{*}_\nu(s)\,ds\quad \text{for $t\in [0,\infty)$}
\end{equation}
for every $\phi,\psi\in\mathcal M_+(\mathcal R , \nu)$. Although the operation  $\phi\mapsto \phi\sp{*}$ is not  subadditive, yet
\begin{equation}\label{gen5}
(\phi + \psi)^{*}_\nu(t) \leq  \phi^{*}_\nu(t/2) +  \psi ^{*}_\nu(t/2)\quad \text{for $t\in [0,\infty)$}
\end{equation}
for every $\phi,\psi\in\mathcal M_+(\mathcal R , \nu)$.
 Two measurable functions $\phi$ and $\psi$ (possibly defined on   different measure spaces) are said to be \textit{equimeasurable} if
$\phi\sp*_\nu=\psi\sp*_\nu$.

A fundamental property of rearrangements is the \textit{Hardy-Littlewood inequality}, which asserts that
\begin{equation}\label{E:HL}
\int _{\mathcal R} |\phi(x) \psi(x)| d\nu (x) \leq \int _0\sp\infty  \phi^*_\nu(t) \psi^*_\nu(t)\, dt
\end{equation}
 for every $\phi, \psi \in\M(\mathcal R , \nu)$.

Assume that either $L=1$ or $L=\infty$. A functional
 $\|\cdot\|_{X(0,L)}{:\Mpl(0,L)\to[0,\infty]}$ is called a
\textit{function norm} if, for all $f$, $g \in \Mpl(0,L)$, all
$\{f_n\}_{n\in\N}\subset{\Mpl(0,L)}$, and every $\lambda {\in[0,\infty)}$:
\begin{itemize}
\item[(P1)]\quad $\|f\|_{X(0,L)}=0$ if and only if $f=0$ a.e.;
$\|\lambda f\|_{X(0,L)}= \lambda \|f\|_{X(0,L)}$; \par\noindent \quad
$\|f+g\|_{X(0,L)}\leq \|f\|_{X(0,L)}+ \|g\|_{X(0,L)}$;
\item[(P2)]\quad $ f \le g$ a.e.\  implies $\|f\|_{X(0,L)}
\le \|g\|_{X(0,L)}$;
\item[(P3)]\quad $f_n \nearrow f$ a.e.\
implies $\|f_n\|_{X(0,L)} \nearrow \|f\|_{X(0,L)}$;
\item[(P4)]\quad $\|\chi _E\|_{X(0,L)}<\infty$ if $|E| < \infty$;
\item[(P5)]\quad if $|E|< \infty$, there exists a constant
 $C$ depending on $E$  such that $\int_E f(t)\,dt \le C
\|f\|_{X(0,L)}$.
\end{itemize}
Here, $E$ denotes a measurable set in $(0,L)$, and  $\chi_E$ stands for its characteristic function.
If, in addition,
\begin{itemize}
\item[(P6)]\quad $\|f\|_{X(0,L)} = \|g\|_{X(0,L)}$ whenever $f\sp* = g\sp *$,
\end{itemize}
we say that $\|\cdot\|_{X(0,L)}$ is a
\textit{rearrangement-invariant function norm}.

The \textit{associate function norm}  $\|\cdot\|_{X'(0,L)}$ of a function norm $\|\cdot\|_{X(0,L)}$ is  defined as
$$
\|f\|_{X'(0,L)}=\sup_{\begin{tiny}
                        \begin{array}{c}
                       {g\in{\Mpl(0,L)}}\\
                        \|g\|_{X(0,L)}\leq 1
                        \end{array}
                      \end{tiny}}
\int_0\sp{L}f(t)g(t)\,dt
$$
for $ f\in\Mpl(0,L)$.
Note that
\begin{equation}\label{X''}
\|\cdot \|_{(X')'(0,L)} = \|\cdot \|_{X(0,L)}.
\end{equation}

Given $\alpha\in[1,\infty)$ and a rearrangement-invariant function norm $\|\cdot \|_{X(0,L)}$, the functional $\|\cdot\|_{X^{\{\alpha\}}(0,L)}$, defined as
\begin{equation}\label{Zp}
\|f\|_{X^{\{\alpha\}}(0,L)}=
\|f^{\alpha} \|_{X(0,L)} ^{\frac 1{\alpha}}
\end{equation}
for $f \in \Mpl(0,L)$, is also a rearrangement-invariant function norm (see e.g.~\cite{MaPe}). The same is true for the functional  $\|\, \cdot \, \|_{X^{\langle\alpha\rangle}(0,1)}$ defined as
\begin{equation}\label{Yalpha}
\|f\|_{X^{\langle\alpha\rangle}(0,L)}=\big\|\left(\left(f\sp{\alpha}\right)\sp{**}\right)\sp{\frac{1}{\alpha}}\big\|_{{X}(0,L)}
\end{equation}
for   $f \in \Mpl(0,L)$ (see e.g.~\cite{hanicka}). One can show that, if $L< \infty$, then there exists a constant $C$ depending on $\alpha$, $L$ and $\|\cdot \|_{X(0,L)}$   such that
\begin{equation}\label{gen35}
\|f \|_{X(0,L)} \leq C \|f\|_{X^{\{\alpha\}}(0,L)}
\end{equation}
for $f \in \Mpl(0,L)$. Moreover,
\begin{equation}\label{gen36}
\|f \|_{X(0,L)} \leq  \|f\|_{X^{\langle\alpha\rangle}(0,L)}
\end{equation}
for $f \in \Mpl(0,L)$.
\par
Let
 $\|\cdot\|_{X(0,L)}$ be   a rearrangement-invariant function norm and let $(\mathcal R, \nu)$ be a measure space as above. Assume that either $\nu (\mathcal R)=\infty$ and $L=\infty$, or  $\nu (\mathcal R)<\infty$ and $L=1$. Then the space $X(\mathcal R , \nu)$ is
defined as the collection of all  functions  $\phi \in\M(\mathcal R , \nu)$
such that the quantity
\begin{equation}\label{norm}
\|\phi\|_{X(\mathcal R,\nu)}=\begin{cases}  \|\phi\sp*_\nu(t)\|_{X(0,\infty)} & \quad \hbox{if $\nu (\mathcal R)= \infty$}
\\
\|\phi\sp*_\nu(\nu(\mathcal R)t)\|_{X(0,1)} & \quad \hbox{if $\nu (\mathcal R)< \infty$,}
\end{cases}
\end{equation}
is finite. The space $X(\mathcal R , \nu)$ is a Banach space, endowed
with the norm given by \eqref{norm}. If
$\mathcal R \subset \rn$ and $\nu$ is Lebesgue measure, we denote
$X(\mathcal R,\nu)$ simply by $X(\mathcal R)$.
 The space $X(0,L)$ is called the
\textit{representation space} of $X(\mathcal R , \nu)$.

The \textit{associate
space}  $X'(\mathcal R , \nu)$
of   a~rearrangement-invariant~space $X(\mathcal R , \nu)$ is
 the
rearrangement-invariant space   built upon the
function norm $\|\cdot\|_{X'(0,L)}$. By \eqref{X''},
$X''(\mathcal R , \nu)=X(\mathcal R , \nu)$.  Hence, any rearrangement-invariant
space $X(\mathcal R , \nu)$ is always the associate space of another
rearrangement-invariant space, namely $X'(\mathcal R , \nu)$. Furthermore,
the \textit{H\"older inequality}
\[
\int_{\mathcal R}|\phi(x)\psi(x)|\,d\nu(x)\leq\|\phi\|_{X(\mathcal R ,
\nu)}\|\psi\|_{X'(\mathcal R , \nu)}
\]
holds  for every $\phi$ and $\psi$ in $\M(\mathcal R , \nu)$.

Let $X(\mathcal R , \nu)$ and $Y(\mathcal R , \nu)$ be rearrangement-invariant\
spaces. We write $X(\mathcal R , \nu) \to Y(\mathcal R , \nu)$ to denote that
$X(\mathcal R , \nu)$ is continuously embedded into $Y(\mathcal R , \nu)$, in the sense that
there exists a  constant $C$ such that
$\|\phi\|_{Y(\mathcal R , \nu)}\leq C\|\phi\|_{ X(\mathcal R , \nu)}$ for every $\phi\in\M(\mathcal R, \nu)$.
Note that the  embedding $X(\mathcal R , \nu) \to Y(\mathcal R , \nu)$ holds if
and only if there exists a  constant $C$ such that
$\|f\|_{Y(0,L)}\leq C\|f\|_{X(0,L)}$ for every $f\in\Mpl(0,L)$.
A property of function norms ensures that
$$
\hbox{$X(\mathcal R , \nu) \subset Y(\mathcal R , \nu)$ \qquad if and only if
\qquad $X(\mathcal R , \nu) \to Y(\mathcal R , \nu)$}.
$$
Moreover,  for any rearrangement-invariant spaces $X(\mathcal R , \nu)$ and
$Y(\mathcal R , \nu)$,
\begin{equation}\label{emb}
\hbox{ $X(\mathcal R , \nu) \rightarrow Y(\mathcal R , \nu)$ \quad  if and only if \quad
$Y'(\mathcal R , \nu) \rightarrow X'(\mathcal R , \nu)$,}
\end{equation}
with the same embedding constants.

Given any $s>0$, let $E_s$ be the  \textit{dilation operator} defined at $f \in \M(0, \infty)$ as
$$
  (E_sf)(t)=
  f(t/s) \quad \text{for $t\in (0,\infty)$,}
$$
and at
$f\in \M(0,1)$ by
$$
  (E_sf)(t)=\begin{cases}
  f(t/s)\quad&\textup{if}\ 0<t\leq s,\\
  0&\textup{if}\ s<t<1.
  \end{cases}
$$
The operator $E_s$
is bounded on any rearrangement-invariant~space $X(0,L)$, with norm
not exceeding $\max\{1, s\}$.

\textit{Hardy's lemma} tells us that if $f_1, f_2 \in \M_+(0,L)$,
and
\begin{equation}\label{E:hardy-lemma}
\int _0^t f_1(s) ds \leq \int _0^t f_2(s) ds \quad \text{for $t\in (0,L)$}, \quad \text{then} \quad
\int _0^L f_1(t) g(t)\, dt \leq \int _0^L f_2(t)g(t)\, dt
\end{equation}
for every non-increasing function $g : (0,L) \rightarrow [0, \infty
]$. A consequence of this result is the \textit{Hardy--Littlewood--P\'olya
principle} which asserts that, if   $f,g\in\Mpl(0,L)$
and
\begin{equation}\label{E:HLP}
\int_0\sp tf\sp*(s)\,ds\leq \int_0\sp tg\sp*(s)\,ds \quad \text{for $t\in (0,L)$},
\quad
\text{then} \quad
\|f\|_{X(0,L)}\leq \|g\|_{X(0,L)}
\end{equation}
for every rearrangement-invariant function norm $\|\cdot\|_{X(0,L)}$.
\\
If  $\nu (\mathcal R ) < \infty$, then
\begin{equation}\label{l1linf}
L^\infty (\mathcal R , \nu ) \to X(\mathcal R , \nu) \to L^1(\mathcal R , \nu )
\end{equation}
for every rearrangement-invariant space
$X(\mathcal R , \nu)$.

We say that an operator $T$ defined on $\Mpl(0,L)$ and taking values in $\Mpl(0,L)$ is bounded
between  two rearrangement-invariant spaces $X(0,L)$ and $Y(0,L)$,
and we  write
\begin{equation}\label{Tembed}
T:X(0,L)\to Y(0,L),
\end{equation}
if the quantity
$$
\|T\|=\sup\left\{\|Tf\|_{Y(0,L)}:\ f\in X(0,L)\cap\Mpl(0,L),\
\|f\|_{X(0,L)}\leq 1\right\}
$$
is finite. Such a quantity will be called the norm of $T$. The space
$Y(0,L)$  will be called optimal, within a certain class, in
\eqref{Tembed} if, whenever $Z(0,L)$ is another
rearrangement-invariant space, from the same class, such that
$T:X(0,L)\to Z(0,L)$, then $Y(0,L)\to Z(0,L)$. Equivalently,
the corresponding function norm $\| \cdot \|_{Y(0,L)}$ will be said
to be optimal in \eqref{Tembed} in the relevant class.

Assume that $T, T'$ are operators defined on $\Mpl(0,L)$ and taking values in $\Mpl(0,L)$ such that
\begin{equation}\label{E:duo}
\int_0\sp L Tf(t)g(t)\,dt=\int_0\sp Lf(t)T'g(t)\,dt
\end{equation}
for every $f,g\in\M_+(0,L)$. Let $X(0,L)$ and $Y(0,L)$ be
rearrangement-invariant spaces. A simple argument involving Fubini's
theorem and the definition of the associate norm shows that
\begin{equation}\label{E:novabis}
T:X(0,L)\to Y(0,L)\quad \textup{if and only if} \quad T':Y'(0,L)\to
X'(0,L)
\end{equation}
and
$
 \|T\|=\|T'\|
$,
see e.g. \cite[Lemma~8.1]{CPS}.

Consider a~pair $(X_0,X_1)$ of Banach spaces of real-valued functions defined on a  measure space $(\mathcal R,\nu)$ as above.
Their
\textit{$K$--functional} is defined, for each $\phi \in X_0+X_1$, by
\begin{equation}\label{gen1}
  K(t,\phi;X_0,X_1)=\inf_{\phi=\phi_1+\phi_2}\left(\|\phi_1\|_{X_0}+t\|\phi_2\|_{X_1}\right)
\quad \text{for $t\in (0,\infty)$}.
\end{equation}
The   following result  is an easy consequence of
definition \eqref{gen1}. Let $(X_0,X_1)$ and
$(Y_0,Y_1)$  be two   pairs of Banach spaces of real-valued functions defined on  measure spaces $(\mathcal R,\nu)$ and $(\mathcal S,\mu)$, respectively. Let $T$ be a~sublinear operator defined on $X_0+X_1$ and taking values in $\M(\mathcal S,\mu)$. This means  that  there exists a constant $C$ such that $|T(\phi + \psi)|\leq C(|T \phi|+|T\psi|)$ and $|T(\lambda \phi)|= |\lambda| |T\phi|$ for every $\phi, \psi \in X_0+X_1$ and  $\lambda\in\R$. Assume that   $T:X_0\to Y_0$ and $T:X_1\to Y_1$.
Then there exists a constant $C'$, depending only on $C$ and on the
norms of $T$ between $X_0$ and $Y_0$ and between $X_1$ and $Y_1$,
such that
\begin{equation}\label{K}
  K(t,T\phi;Y_0,Y_1)\leq C' K(C't,\phi;X_0,X_1)
  \quad \text{for every $\phi\in X_0+X_1$ and $t\in(0,\infty)$}.
\end{equation}
A classical interpolation theorem by
Calder\'on \cite[Chapter~3, Theorem~2.12]{BS}  asserts that, if $(\mathcal R , \nu)$ is a  measure space as above and $T$ is a
sublinear operator such that $T: L^1(\mathcal R, \nu )\to L^1(\mathcal R , \nu)$ and $T:
L^\infty(\mathcal R, \nu )\to L^\infty(\mathcal R, \nu )$, then
\begin{equation}\label{gen2} T: X(\mathcal R, \nu )\to X(\mathcal R, \nu )
\end{equation}
for every rearrangement-invariant space $X(\mathcal R, \nu )$. Moreover, the  norm of $T$ in \eqref{gen2} depends only on the
norms of $T$ in $L^1(\mathcal R, \nu )$ and in $L^\infty(\mathcal R, \nu )$, and on the constant $C$ appearing in the definition of sublinear operator.
\par
A basic example of a~function norm is the \textit{Lebesgue norm}
$\|\cdot\|_{L\sp p(0,L)}$, defined as usual for $p\in[1,\infty]$.

An important generalization of Lebesgue norms is constituted by the two-parameter Lorentz norms. Assume that $0<p,q\le\infty$. We define the functional
$\|\cdot\|_{L\sp{p,q}(0,L)}$  by
$$
\|f\|_{L\sp{p,q}(0,L)}=
\left\|t\sp{\frac{1}{p}-\frac{1}{q}}f^*(t)\right\|_{L\sp q(0,L)}
$$
for  $f \in {\Mpl(0,L)}$. Here, and in what follows,  we use the convention that $\frac1{\infty}=0$.
If either $1<p<\infty$
and $1\leq q\leq\infty$, or $p=q=1$, or $p=q=\infty$,
 then $\|\cdot\|_{L\sp{p,q}(0,L)}$ is equivalent to a~rearrangement-invariant function norm, and
\begin{equation}\label{E:lorentz-assoc}
(L\sp{p,q})'(0,L)=L\sp{p',q'}(0,L).
\end{equation}
We further define the functional $\|\cdot\|_{L\sp{(p,q)}(0,L)}$ as
$$
\|f\|_{L\sp{(p,q)}(0,L)}=
\left\|t\sp{\frac{1}{p}-\frac{1}{q}}f^{**}(t)\right\|_{L\sp q(0,L)}
$$
for  $f \in {\Mpl(0,L)}$. If either $0< p<\infty$
and $1\leq q\leq\infty$, or $p=q=\infty$, then
$\|\cdot\|_{L\sp{(p,q)}(0,L)}$ is a~rearrangement-invariant function
norm (see e.g.~\cite[Theorem~9.7.5]{PKJF}). The norms
$\|\cdot\|_{L\sp{p,q}(0,L)}$ and  $\|\cdot\|_{L\sp{(p,q)}(0,L)}$ are
called \textit{Lorentz function norms}, and the corresponding spaces
$L\sp{p,q}(\mathcal R,\nu)$ and $L\sp{(p,q)}(\mathcal R,\nu)$ are called
\textit{Lorentz spaces}.

The following inclusion relations between Lorentz spaces hold:
\begin{equation}\label{i}
L\sp{p,p}(0,L)=L\sp p(0,L)\quad \text{for $p\in[1,\infty]$;}
\end{equation}
\begin{equation}\label{ii}
L\sp{p,q}(0,L)\to L\sp {p,r}(0,L)\quad \text{for $p\in[1,\infty]$ and $1\leq q\leq r\leq
\infty$;}
\end{equation}
\begin{equation}\label{iii}
L\sp{(p,q)}(0,L)\to L\sp{p,q}(0,L)\quad \text{for $p,q\in[1,\infty]$;}
\end{equation}
\begin{align}
&\textup{if either}\ p\in(1,\infty)\ \textup{and}\ 1\leq q\leq\infty,\
\textup{or}\ p=q=\infty,\label{iv}\\
&\textup{then}\
L\sp{(p,q)}(0,L)=L\sp{p,q}(0,L)\ \textup{up to equivalent norms. \notag}
\end{align}

Suppose now that $0<p,q\le\infty$ and $\alpha \in\R$. We define the
functional $\|\cdot\|_{L\sp{p,q;\alpha}(0,1)}$ by
\begin{equation}\label{E:1.18}
\|f\|_{L\sp{p,q;\alpha}(0,1)}=
\left\|t\sp{\frac{1}{p}-\frac{1}{q}}\log \sp
\alpha\lt(\tfrac{e}{t}\rt) f^*(t)\right\|_{L\sp q(0,1)}
\end{equation}
for  $f \in {\Mpl(0,1)}$. For suitable choices of the parameters  $p,q, \alpha$, the functional
$\|\cdot\|_{L\sp{p,q;\alpha}(0,1)}$ is equivalent to a~function
norm. If this is the  case, $\|\cdot\|_{L\sp{p,q;\alpha}(0,1)}$  is
called  \textit{Lorentz--Zygmund function norm}, and the
corresponding space $L\sp{p,q;\alpha}(R,\nu)$ is called a
\textit{Lorentz--Zygmund space}. A detailed study of
Lorentz-Zygmund spaces can be found in~\cite{glz}
or~\cite{EOP} -- see also \cite[Chapter~9]{PKJF}. It follows from~\cite[Theorem~4]{Sa} that
\begin{equation}\label{v}
(L\sp{\infty,q;-1})'(0,1)=L\sp{(1,q')}(0,1)\qquad \hbox{for $q\in(1,\infty)$.}
\end{equation}

The $K$--functional for pairs of Lorentz spaces $L\sp{p,q}(\mathcal R, \nu)$
is given, up to equivalence, by the \textit{Holmstedt's
formulas} \cite[Theorem~4.1]{H}. They assert  that, if either
$p_0=q_0=1$, or  $1< p_0<p_1<\infty$ and $1\leq q_0,q_1<\infty$, and $\alpha$ is given by
$\frac1\alpha=\frac{1}{p_0}-\frac{1}{p_1}$, then
\begin{equation}\label{E:fx2.12}
  K(t,\phi;L\sp{p_0,q_0}(\mathcal R, \nu),L\sp{p_1,q_1}(\mathcal R, \nu))\approx
  \left(\int_{0}\sp{t\sp\alpha}\left[s\sp{\frac1{p_0}-\frac1{q_0}}
  \phi\sp*_\nu(s)\right]\sp{q_0}\,ds\right)\sp{\frac1{q_0}}
  +
  t\left(\int_{t\sp\alpha}\sp{\infty}\left[s\sp{\frac1{p_1}-\frac1{q_1}}
  \phi\sp*_\nu(s)\right]\sp{q_1}\,ds\right)\sp{\frac1{q_1}}
\end{equation}
for $t\in (0, \infty)$, up to multiplicative constants depending on $p_0, p_1, q_0, q_1$. Furthermore, if either $p_0=q_0=1$, or $1<p_0<\infty$
and $1\leq q_0<\infty$, then
\begin{equation}\label{E:fx2.13}
  K(t,\phi;L\sp{p_0,q_0}(\mathcal R, \nu),L\sp{\infty}(\mathcal R, \nu))\approx
 \left(\int_{0}^{t\sp{p_0}}
 \left[s\sp{\frac{1}{p_0}-\frac{1}{q_0}}\phi\sp*_\nu(s)\right]\sp{q_0}\,ds\right)\sp{\frac1{q_0}}\quad \text{for $t\in (0,\infty)$},
\end{equation}
 up to multiplicative constants depending on $p_0$ and  $q_0$.
\par
An open set  $\Omega$ in $\rn$ is said to have the \textit{cone property} if
there exists a finite cone $\Lambda$ such that each point in $\Omega$ is the vertex of a finite cone contained in
$\Omega$ and congruent to $\Lambda$.
An open set  $\Omega$ is called a \textit{Lipschitz domain} if each point of $\partial \Omega$ has a neighborhood
$\mathcal U $ such that $\Omega \cap \mathcal U$ is  the
subgraph of a Lipschitz continuous function of $n-1$ variables.

 Let $ m \in \N$
and let $X(\Omega )$ be a rearrangement-invariant space. We define
the $m$-th order Sobolev type space
 $W^m X(\Omega )$ as
\begin{equation}\label{E:W}
W^m X(\Omega ) =   \big\{u: \hbox{$u$ is $m$-times weakly
differentiable in $\Omega$, and $|\nabla ^k u| \in X(\Omega )$ for
$k=0, \dots , m$}\big\},
\end{equation}
equipped with the norm
$$\|u\|_{W^m X(\Omega )} = \sum _{k=0}^{m} \|\nabla
^k u\|_{X(\Omega  )}.$$ Here, $\nabla ^k u$ denotes the vector of
all $k$-th order weak derivatives of $u$, and $\|\nabla
^k u\|_{X(\Omega  )}$ is an abridged notation for $\| |\nabla
^k u | \|_{X(\Omega  )}$. In particular, $\nabla ^0
u$ stands for $u$, and $\nabla ^1 u$ will also be simply denoted by
$\nabla u$.  We shall also denote by $D^mu$ the vector whose components are the union of the components of $\nabla ^k u$ with $k=0, \dots , m$.
\\ We define the subspace $W^m_0X(\o)$ of $W^mX(\o)$ as the collection of those functions from $W^mX(\o)$ whose continuation to $\rn$  by $0$ outside $\o$ is an $m$-times weakly differentiable function in $\rn$.
The spaces $W^m X(\Omega )$ and $W^m_0 X(\Omega )$ are  Banach spaces. If $|\Omega|<\infty$, then, thanks to a general form of the Poincar\'e  inequality for rearrangement-invariant spaces   \cite[Lemma 4.2]{CP1998}, the space $W^m_0X(\o)$ can also be equivalently normed  by the functional $ \|\nabla\sp m u\|_{X(\Omega)}$ for $u\in W^m_0 X(\Omega )$.
Thus,
\begin{equation}\label{gen21}
\|u\|_{W^m X(\Omega )} \approx  \|D
^m u\|_{X(\Omega  )} \approx  \|\nabla
^m u\|_{X(\Omega  )}
\end{equation}
for $u \in W^m_0 X(\Omega )$.
Observe that, still under the assumption that $|\Omega|<\infty$ and thanks to the second embedding in \eqref{l1linf}, the continuation to $\rn$  by $0$ outside $\o$  of any function in $W^m_0 X(\Omega )$ belongs, in particular, to $W^{m,1}(\rn)$.
\\
By $C^\infty_0(\Omega)$  we denote the space of all infinitely times differentiable functions with compact support in $\Omega$, and by $C(\Omega)$ and $C(\overline \Omega)$ the spaces of continuous functions in $\Omega$ and $\overline \Omega$, respectively. Observe that the closure of $C^\infty_0(\Omega)$ in $W^m X(\Omega )$ is contained in $W^m_0 X(\Omega )$. The reverse inclusion holds under suitable  assumptions on the domain $\Omega$ and on the   norm in  $X(\o)$.
\par
As far as the K-functional for pairs of Sobolev spaces is
concerned,   if $\Omega$ is a bounded Lipschitz domain, then by
the result of \cite{DS},
the reiteration
theorem~\cite[p.~311]{BS} and Holmstedt's formulas one has what follows.  If either
$p_0=q_0=1$, or  $1< p_0<p_1<\infty$ and $1\leq q_0,q_1<\infty$,
then
\begin{align}\label{E:fx2.14}
  K(t,u;W\sp{m}L\sp{p_0,q_0}(\Omega ),W\sp{m}L\sp{p_1,q_1}(\Omega ))
  \approx
  &\left(\int_{0}\sp{t\sp\alpha}\left[s\sp{\frac1{p_0}-\frac1{q_0}}
  |D\sp mu|\sp*(s)\right]\sp{q_0}\,ds\right)\sp{\frac1{q_0}}\\
  &+
  t\left(\int_{t\sp\alpha}\sp{\infty}\left[s\sp{\frac1{p_1}-\frac1{q_1}}
  |D\sp mu|\sp*(s)\right]\sp{q_1}\,ds\right)\sp{\frac1{q_1}}\quad \text{for $t\in (0,\infty)$},\nonumber
\end{align}
 up to multiplicative constants depending on $\Omega$ and on $p_0, p_1, q_0, q_1$,
where
$
\frac1{\alpha}=\frac{1}{p_0}-\frac{1}{p_1}$.
Furthermore, if either $p_0=q_0=1$, or $1<p_0<\infty$ and $1\leq
q_0<\infty$, then
\begin{equation}\label{gen41}
  K(t,u;W\sp{m}L\sp{p_0,q_0}(\Omega ),W\sp{m}L\sp{\infty}(\Omega ))
  \approx
  \left(\int_{0}^{t\sp{p_0}}
 \left[s\sp{\frac{1}{p_0}-\frac{1}{q_0}}|D\sp mu|\sp*(s)\right]\sp{q_0}\,ds\right)\sp{\frac1{q_0}}\quad \text{for $t\in (0,\infty)$,}
\end{equation}
 up to multiplicative constants depending on $p_0$ and  $q_0$.
\\ Equations \eqref{E:fx2.14}  and \eqref{gen41} continue to hold in any open set $\o \subset \rn$, with $|\o|< \infty$, provided that $W^m $ is replaced by  $\ W^m_0$ in all occurences on their left-hand sides.

\section{Endpoint embeddings}\label{boder}

This section is devoted to Sobolev embeddings with measures
in borderline situations. These correspond to the weakest possible domain Sobolev norms for trace operators, with respect to the involved measures, to exist. In their turn, the endpoint  embeddings that will be established provide us with the strongest possible target norms. Apart from their own interest, they are pivotal for the development of our method.
\par
Assume that $\mu$ is a $d$-Frostman measure on $\o$, namely a finite Borel measure satisfying condition \eqref{E:sup} for some $d\in (0, n]$.
Let  $\|\cdot\|_{X(0,1)}$ and $\|\cdot\|_{Y(0,1)}$ be rearrangement-invariant function norms, and let $m \in \mathbb N$. We say that  $$T_\mu : W^mX(\o) \to Y(\o , \mu)$$ is a trace operator if it is a linear bounded operator such that $T_\mu u = u$ in $\o$ whenever $u\in W^mX(\o) \cap C(\o)$. An analogous definition applies if $W^mX(\o)$ is replaced by $W^m_0X(\o)$.
\\ Similarly, if $\mu$ is a $d$-Frostman measure on $\overline \o$, namely   a Borel measure satisfying condition \eqref{E:supbar},   we say that  $$T_\mu : W^m X(\o) \to Y(\overline \o , \mu)$$ is a trace operator if it is a linear bounded operator such that $T_\mu u = u$ in $\overline \o$ whenever $u\in W^mX(\o) \cap C(\overline \o)$.
\par In what follows,  a trace embedding
\begin{equation}\label{gen20}
W^mX(\o) \to Y(\o , \mu),
\end{equation}
or, equivalently, a trace inequality
\begin{equation}\label{gen8}\|u\|_{Y(\o , \mu)}\leq C \|u\|_{W^mX(\o)}
\end{equation}
has to be interpreted in the sense that there exists a trace operator $T_\mu : W^mX(\o) \to Y(\o , \mu)$ as defined above, and that inequality \eqref{gen8} holds, where  $u$ is a simplified notation for $T_\mu u$ on the  left-hand side. Embeddings and inequalities involving $W^m_0X(\o)$ or $Y(\overline \o , \mu)$ have to be understood analogously. The space $Y(\o , \mu)$ is said to be optimal in embedding \eqref{gen20} within a certain family of spaces if, whenever embedding \eqref{gen20} holds with $Y(\o , \mu)$ replaced by another space $Z(\o , \mu)$ from the same family, then $Y(\o , \mu) \to Z(\o , \mu)$. Equivalently, the norm $\|\cdot \|_{Y(\o , \mu)}$, or the function norm $\|\cdot \|_{Y(0, L)}$,  is said to be optimal in inequality \eqref{gen8}.

\medskip
\par
We begin our discussion by noticing that the only non-trivial case in view of our purposes is when
\begin{equation}\label{m<n}
m<n.
\end{equation}
 Indeed, if $m \geq n$, then classically $W^{m,1}_0(\Omega) \to C(\Omega)$. Hence, if $\Omega$ has finite Lebesgue measure, then, by \eqref{l1linf},  $W^m_0X(\o) \to W^{m,1}_0(\Omega) \to Y(\o , \mu)$   for every rearrangement-invariant norms $\|\cdot\|_{X(0,1)}$ and $\|\cdot\|_{Y(0,1)}$ and any Frostman measure $\mu$. The situation is analogous, under the  regularity assumptions to be imposed on $\o$, if
 trace embeddings involving $W^mX(\o)$ or  $Y({\overline \Omega},\mu)$ are considered.  We shall thus  restrict our attention to the case when \eqref{m<n} is in force.

As already mentioned in Section \ref{intro},  the  threshold $n-m$ for the exponent $d$ naturally arises when dealing with $m$-th order Sobolev trace embeddings for $d$-Frostman measures.  The corresponding theory  indeed exhibits different features depending on whether $d$ exceeds this threshold or not. Our analysis of the pertaining borderline inequalities is accordingly split in two subsections.

\subsection{Case $n-m \leq d \leq n$}\label{borderfast}

 Here, we assume that
\begin{equation}\label{super}
d \in [n-m,  n]\,.
\end{equation}
Under  condition \eqref{super}, a trace operator is classically well defined on  $W^{m,1}_0(\Omega)$ if $|\Omega|<\infty$.
Hence, it is well defined in any  Sobolev space $W^m_0X(\Omega)$, whatever the rearrangement-invariant space $X(\Omega)$ is, since, by \eqref{l1linf}, $W^m_0X(\Omega) \to W^{m,1}_0(\Omega)$.
The situation is similar, under the regularity assumptions that will be imposed  on $\o$, if
 trace embeddings of $W^mX(\o)$ on $\o$ or $\overline \o$ are in question.

Our first result provides us with an optimal rearrangement-invariant target space  for trace embeddings of $W^{m,1}_0(\Omega)$ or $W^{m,1}(\Omega)$.

\begin{theorem}{\rm{\bf [Case $n-m \leq d \leq  n$: optimal trace embedding for $W^{m,1}(\Omega)$]}}\label{P:endpoint}
Let $\Omega$ be an open set with finite Lebesgue measure in $\rn$, $n\geq 2$, let $m\in \N$, with $m<n$, and let $d \in [n-m, n]$.
\\
\textup{(}i\textup{)} Let
$\mu$ be a finite Borel measure on $\Omega$ fulfilling \eqref{E:sup}. Then
\begin{equation}\label{E:endpoint_estimate0}
\|u\|_{L^{\frac{d}{n-m},1}(\Omega,\mu)} \leq C \|\nabla ^m u\|_{L^1(\Omega)}
\end{equation}
for some constant $C$ and every $u\in W^{m,1}_0(\Omega)$.
\\
\textup{(}ii\textup{)}  Assume, in addition, that  $\Omega$ is  bounded  and has the cone property. Let
$\mu$ be a Borel measure on $\o$ fulfilling \eqref{E:sup}. Then
\begin{equation}\label{E:endpoint_estimate}
\|u\|_{L^{\frac{d}{n-m},1}(\Omega,\mu)} \leq C \|u\|_{W^{m,1}(\Omega)}
\end{equation}
for some constant $C$ and every $u\in W^{m,1}(\Omega)$.
\\
\textup{(}iii\textup{)} Assume  that $\Omega$ is a bounded Lipschitz domain.  Let
$\mu$ be a Borel measure on $\overline \o$ fulfilling \eqref{E:supbar}. Then
\begin{equation}\label{E:endpoint_estimatebar}
\|u\|_{L^{\frac{d}{n-m},1}(\overline{\Omega},\mu)} \leq C \|u\|_{W^{m,1}(\Omega)}
\end{equation}
for some constant $C$ and every $u\in W^{m,1}(\Omega)$.
\end{theorem}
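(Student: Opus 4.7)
The natural plan is to prove part (i) first and then reduce (ii) and (iii) to it via extension arguments. For (i), the starting observation is that, by the remark at the end of Section~\ref{back}, the zero-extension of $u\in W^{m,1}_0(\Omega)$ belongs to $W^{m,1}(\rn)$, so the classical Riesz potential representation
$$|u(x)| \leq c_{n,m}\int_{\rn}\frac{|\nabla^m u(y)|}{|x-y|^{n-m}}\,dy$$
reduces the desired inequality to the boundedness of the Riesz potential $I_m$ from $L^1(\rn)$ into $L^{d/(n-m),1}(\rn,\mu)$ for every $d$-Frostman measure $\mu$ with $d\in[n-m,n]$.

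To prove this mapping property I would combine the layer-cake identity
$$\|f\|_{L^{d/(n-m),1}(\mu)} \approx \int_0^\infty \mu(\{|f|>t\})^{(n-m)/d}\,dt$$
with a Maz'ya-type capacitary truncation. Write the dyadic truncations $u_k=\min\!\bigl((|u|-2^k)_+,\,2^k\bigr)$, each of which lies in $W^{m,1}_0$ and satisfies $\{|u|>2^{k+1}\}\subset\{u_k>2^k\}$, then pass from the level-set integral to a series $\sum_k 2^k\mu(\{|u|>2^{k+1}\})^{(n-m)/d}$. The main ingredient needed to control this series by $\|\nabla^m u\|_{L^1(\Omega)}$ is a capacity--measure inequality of the form
$$\mu(K)^{(n-m)/d} \leq C\,\mathrm{cap}_{m,1}(K)$$
for every compact $K\subset\rn$, where $\mathrm{cap}_{m,1}$ denotes the $L^1$-Sobolev capacity of order $m$. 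For $d>n-m$ such an estimate follows from a Whitney-type dyadic covering of $K$, together with the Frostman bound $\mu(B_r(x))\leq A r^d$ and the standard estimate $\mathrm{cap}_{m,1}(B_r)\gtrsim r^{n-m}$. The borderline $d=n-m$ is more delicate: for $m=1$ it amounts to the inequality $\mu(E)\leq C\,P(E)$ for sets of finite perimeter, which follows from the Frostman condition via the coarea formula, and it is then pushed to general $m$ by iteration through the first-order inequality applied to $\nabla^{m-1}u$. Summing the resulting capacitary bounds and exploiting the essentially disjoint supports of $\nabla^m u_k$ collapses the series into $\|\nabla^m u\|_{L^1(\Omega)}$, and the density of $C_c^\infty(\Omega)$ in $W^{m,1}_0(\Omega)$ completes (i).

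For (ii), the cone property gives Sobolev's integral representation
$$|u(x)| \leq C\sum_{k=0}^m \int_{\Omega\cap\Lambda(x)}\frac{|\nabla^k u(y)|}{|x-y|^{n-k}}\,dy\quad\text{for $x\in\Omega$,}$$
where $\Lambda(x)$ is a finite cone with vertex at $x$; each summand is dominated by a Riesz potential on $\rn$ of a function compactly supported in $\Omega$, so applying (i) order by order and summing yields \eqref{E:endpoint_estimate}. For (iii), the bounded Lipschitz regularity of $\Omega$ provides a bounded linear extension $E\colon W^{m,1}(\Omega)\to W^{m,1}(\rn)$ (Stein), with $Eu=u$ on $\overline{\Omega}$. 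Viewing $\mu$ as a $d$-Frostman measure on $\rn$ supported in $\overline{\Omega}$, the $\rn$-analogue of (i) applied to $Eu$ produces
$$\|u\|_{L^{d/(n-m),1}(\overline{\Omega},\mu)} = \|Eu\|_{L^{d/(n-m),1}(\rn,\mu)} \leq C\|\nabla^m Eu\|_{L^1(\rn)} \leq C'\|u\|_{W^{m,1}(\Omega)},$$
which is \eqref{E:endpoint_estimatebar}.

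The main obstacle is the capacity--measure inequality at the borderline $d=n-m$: the dyadic summation that succeeds for $d>n-m$ diverges at the endpoint, and a separate, more structural argument is required. A related technical point, peculiar to the endpoint $L^{d/(n-m),1}$ target, is that Maz'ya's truncation step must be carried out so as to preserve a \emph{sum}, rather than a supremum, of the capacitary controls $\|\nabla^m u_k\|_{L^1}$, and this sum has to be telescoped to $\|\nabla^m u\|_{L^1}$ by exploiting the disjointness of the supports of the $\nabla^m u_k$'s; any slack at this stage would only give the weaker Lebesgue $L^{d/(n-m)}$ target.
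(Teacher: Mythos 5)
Your plan for $m=1$ is essentially sound and close in spirit to the paper (which proves the weak-type $L^{d/(n-1),\infty}$ estimate from Maz'ya's $L^{d/(n-1)}$ embedding and then upgrades to $L^{d/(n-1),1}$ by precisely the truncation argument you sketch). The extension arguments for parts (ii) and (iii) are also along the right lines, although your use of a cone-type Sobolev representation for (ii) is a genuinely different route from the paper's, which instead decomposes $\Omega$ into finitely many bounded Lipschitz subdomains and reduces to part (i) via extension operators on each piece; the decomposition approach avoids having to track the lower-order Riesz kernels $|x-y|^{k-n}$ (which would naturally produce a Lorentz target $L^{d/(n-k),1}(\mu)$ that is \emph{weaker} than $L^{d/(n-m),1}(\mu)$ when $k<m$, and hence does not immediately sum to the claimed inequality).

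The serious gap is in your treatment of $m\geq 2$ in part (i). The Maz'ya truncations $u_k=\min\bigl((|u|-2^k)_+,2^k\bigr)$ are Lipschitz images of $u$ and therefore stay in $W^{1,1}$, but they do \emph{not} belong to $W^{m,1}$ for $m\geq 2$: the kinks at the levels $2^k$ and $2^{k+1}$ create jumps in the first derivative whose higher-order distributional derivatives are singular measures, so $\nabla^m u_k$ is not an $L^1$ function and the capacitary bound $\mu(\{u_k>2^k\})^{(n-m)/d}\lesssim\operatorname{cap}_{m,1}(\{u_k>2^k\})\lesssim\|\nabla^m u_k\|_{L^1}$ has no meaning. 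Thus the telescoping sum over $k$ that you rely on never gets off the ground when $m\geq 2$. You signal awareness of an ``obstacle at the borderline $d=n-m$'' and of the need to preserve a sum rather than a supremum in the truncation step, but the obstacle is actually present for every $d$ as soon as $m\geq 2$. The paper circumvents this by confining the truncation to the case $m=1$ and handling $m\geq 2$ through a duality argument: writing $|u|\lesssim I_{m-1}(|\nabla^{m-1}u|)$, testing the Lorentz norm $\|u\|_{L^{d/(n-m),1}(\mu)}$ against $v\in L^{d/(d-n+m),\infty}(\mu)$, pulling the Riesz kernel across via Fubini to form the auxiliary weight $w(y)=\int|v(x)|\,|x-y|^{m-n-1}\,d\mu(x)$, verifying that $w\,dy$ is an $(n-1)$-Frostman measure, and invoking the already-proved $m=1$ result for that measure. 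Your proposal as written would need either this dualization or some substitute (e.g.\ a genuinely $m$-th-order capacitary strong-type inequality, which is much deeper and not available at the $L^1$ endpoint in this generality) to close the gap.
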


The result of Theorem \ref{P:endpoint} is sharp. Indeed, assume that  the measure $\mu$ decays exactly like $r^d$ on balls of radius $r$, at least around one point of $\o$, in the sense that there exist $x_0\in \o$  and $R>0$ such that
\begin{equation}\label{E:inf}
\inf_{r\in (0,R]} \frac{\mu(B_r(x_0)\cap {\Omega})}{r^d} >0\,.
\end{equation}
Then
the Lorentz norm $\|\cdot\|_{L^{\frac{d}{n-m},1}(\Omega,\mu)}$ on the left-hand sides of inequalities \eqref{E:endpoint_estimate0}--\eqref{E:endpoint_estimate} is the strongest admissible among all rearrangement-invariant norms. The same conclusion holds with regard to the norm $\|\cdot\|_{L^{\frac{d}{n-m},1}(\overline \Omega,\mu)}$ in inequality \eqref{E:endpoint_estimatebar}, provided that
\begin{equation}\label{E:infbar}
\inf_{r\in (0,R]} \frac{\mu(B_r(x_0)\cap {\overline \Omega})}{r^d} >0\,
\end{equation}
for some $x_0\in \overline \o$  and $R>0$. This is the content of the next proposition.

\begin{prop}\label{sharpness fast}{\rm{\bf [Sharpness of Theorem \ref{P:endpoint}]}}
Let $\Omega$ be an open set  in $\rn$, with $n\geq 2$, as in the respective parts \textup{(}i\textup{)}, \textup{(}ii\textup{)} and \textup{(}iii\textup{)} of Theorem~\ref{P:endpoint}. Let  $m  \in \N$ be such that $m<n$, and let $d \in [n-m, n]$.
\\
\textup{(}i\textup{)} Let $\mu $ be a finite Borel measure on $\Omega$ fulfilling conditions \eqref{E:sup} and \eqref{E:inf}. Then the target norm $\|\cdot\|_{L^{\frac{d}{n-m},1}(\Omega,\mu)}$ is optimal in  inequalities \eqref{E:endpoint_estimate0} and \eqref{E:endpoint_estimate}.
\\
\textup{(}ii\textup{)}
 Let $\mu $ be a finite Borel measure on $\overline \Omega$ fulfilling conditions \eqref{E:supbar} and \eqref{E:infbar}. Then the target norm $\|\cdot\|_{L^{\frac{d}{n-m},1}(\overline \Omega,\mu)}$ is optimal in  inequality   \eqref{E:endpoint_estimatebar}.
\end{prop}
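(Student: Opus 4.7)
The plan is to reduce the sharpness claim to a fundamental-function estimate for any admissible rearrangement-invariant target $Z$, obtained by testing the hypothesized inequality against a one-parameter family of rescaled bumps concentrated at $x_0$. Fix a smooth function $\eta:[0,\infty)\to[0,1]$ that equals $1$ on $[0,1/2]$ and vanishes on $[1,\infty)$. Let $c>0$ be the constant from \eqref{E:inf} (respectively \eqref{E:infbar}), so that $\mu(B_r(x_0)\cap\Omega)\ge c\,r^d$ for $r\in(0,R]$, and, after possibly shrinking $R$, assume additionally that $R\le 1$ and (in part (i)) that $B_R(x_0)\subset\Omega$. For $0<s\le s_0:=c(R/2)^d$ set $r_s=2(s/c)^{1/d}\le R$ and
\begin{equation*}
u_s(x)=\eta\!\left(|x-x_0|/r_s\right).
\end{equation*}
Then $u_s\in C^\infty_0(\Omega)\subset W^{m,1}_0(\Omega)$ in part (i), while in part (ii) the restriction of $u_s$ to $\Omega$ belongs to $W^{m,1}(\Omega)$, being the restriction to $\Omega$ of a smooth compactly supported function on $\rn$.

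A standard scaling argument yields $\|\nabla^k u_s\|_{L^1(\Omega)}\lesssim r_s^{n-k}$ for $k=0,\dots,m$, and consequently
\begin{equation*}
\|\nabla^m u_s\|_{L^1(\Omega)}+\|u_s\|_{W^{m,1}(\Omega)}\lesssim r_s^{n-m}\approx s^{(n-m)/d}.
\end{equation*}
On the other hand, $|u_s|\ge\chi_{B_{r_s/2}(x_0)\cap\Omega}$, so the lower bound on $\mu$ forces $\mu(\{|u_s|\ge 1\})\ge c(r_s/2)^d=s$ (with $\overline\Omega$ in place of $\Omega$ in part (ii)). If $Z$ denotes any rearrangement-invariant space for which the corresponding inequality of Theorem~\ref{P:endpoint} holds with $L^{d/(n-m),1}$ replaced by $Z$, and $\phi_Z(s)=\|\chi_E\|_Z$ (for $\mu(E)=s$) is its fundamental function, then applying the hypothesized inequality to $u_s$ gives
\begin{equation*}
\phi_Z(s)\le\|\chi_{\{|u_s|\ge 1\}}\|_Z\le\|u_s\|_Z\lesssim s^{(n-m)/d},\qquad s\in(0,s_0].
\end{equation*}
Since $\phi_Z$ is non-decreasing and $\phi_Z(\mu(\Omega))=\|\chi_\Omega\|_Z<\infty$, the same bound (with a larger constant) persists on $(0,\mu(\Omega)]$ (respectively on $(0,\mu(\overline\Omega)]$).

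The passage from this fundamental-function bound to the embedding $L^{d/(n-m),1}(\Omega,\mu)\to Z(\Omega,\mu)$ is a classical consequence of the layer-cake decomposition $f^*(t)=\int_0^\infty\chi_{(0,|\{f^*>\lambda\}|)}(t)\,d\lambda$ together with Minkowski's integral inequality applied to the function norm $\|\cdot\|_{Z(0,\mu(\Omega))}$: for every $f\ge 0$,
\begin{equation*}
\|f\|_Z=\|f^*\|_Z\le\int_0^\infty \phi_Z\!\left(|\{f^*>\lambda\}|\right)\,d\lambda\lesssim\int_0^\infty|\{f^*>\lambda\}|^{(n-m)/d}\,d\lambda\approx\|f\|_{L^{d/(n-m),1}},
\end{equation*}
where the last comparison is the elementary identity $\|f\|_{L^{p,1}}=p\int_0^\infty|\{f>\lambda\}|^{1/p}\,d\lambda$ with $p=d/(n-m)$. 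I do not anticipate any substantial obstacle; the sole subtlety lies in part (ii) with $x_0\in\partial\Omega$, where $u_s$ may fail to have compact support in $\Omega$, yet its restriction still belongs to $W^{m,1}(\Omega)$ with the same norm estimate and the super-level-set bound is furnished directly by \eqref{E:infbar}.
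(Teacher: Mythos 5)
Your proof is correct, and it takes a genuinely different and more elementary route than the paper. The paper does not prove Proposition~3.2 directly: it refers to Theorem~6.1, whose optimality part is in turn driven by Theorem~3.3. There, one takes an \emph{arbitrary} nonnegative non-increasing $f\in X(0,1)$, builds from it the radial test function $u(x)=g(\omega_n|x|^n)$ with $g$ an $m$-fold iterated integral of $f$ weighted by $r^{-m+m/n}$, and uses \eqref{E:inf}/\eqref{E:infbar} together with \cite[inequality (4.20)]{CP-Trans} to show that the full Hardy inequality \eqref{E:assumption} is necessary; optimality is then extracted by duality ($S_1\colon L^1\to Z$ iff $S_1'\colon Z'\to L^\infty$, combined with the explicit form of $S_1'f^*$). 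Your argument replaces all of this with a one-parameter family of scaled bumps, yielding only a bound on the fundamental function $\phi_Z(s)\lesssim s^{(n-m)/d}$, and then invokes the well-known minimality of $L^{p,1}$ among rearrangement-invariant spaces with that fundamental function, which you re-prove in-line via the layer-cake representation $f^*(t)=\int_0^\infty\chi_{(0,|\{f^*>\lambda\}|)}(t)\,d\lambda$ and Minkowski's integral inequality for the norm $\|\cdot\|_Z$ (valid for any Banach function norm by $Z=Z''$ and Fubini). Your extension of the bound from $(0,s_0]$ to all of $(0,\mu(\Omega)]$ using monotonicity of $\phi_Z$ and $\phi_Z(\mu(\Omega))<\infty$, and your handling of the case $x_0\in\partial\Omega$ by restricting a globally smooth bump to $\Omega$, are both correct. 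What the paper's approach buys over yours is generality: Theorem~3.3 establishes necessity of the Hardy inequality for \emph{any} domain norm $X$ and is used again in Theorems~4.3, 5.3, and 6.1, whereas your bump-function argument is specifically tuned to the case where the domain norm is $L^1$ and the target endpoint is a $\Lambda$-space characterized by its fundamental function. For Proposition~3.2 alone, your route is shorter and more transparent.
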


Proposition \ref{sharpness fast} is a special case of Theorem~\ref{T:main-example}, which is stated and proved in Section \ref{S:last}.

\begin{proof}[Proof of Theorem \ref{P:endpoint}]
Consider part~(i). Set
\begin{equation}\label{normmu}
\|\mu\|_d=\sup_{x\in \Rn, r>0} \frac{\mu(B_r(x)\cap {\Omega})}{r^d}.
\end{equation}
Let $\nu$ be the Borel measure on $\rn$ defined as
\begin{equation}
\label{extmeas}
\nu (E)= \mu (E \cap \Omega)
\end{equation}
for every Borel set $E\subset \rn$. We shall prove that
\begin{equation}\label{E:whole_rn}
\|u\|_{L^{\frac{d}{n-m},1}(\Rn,\nu)} \lesssim \|\mu\|_d^{\frac{n-m}{d}} \int_{\Rn} |\nabla^m u|\,dx
\end{equation}
for every function $u\in C^\infty_0(\Rn)$.  The constants in the relations ``$\lesssim$'' and ``$\approx$'' in this proof depend only on $m,n,d,\Omega$ and $\mu$.
\\ In order to establish inequality \eqref{E:whole_rn}, we distinguish between the cases when $m=1$ and $m>1$.
First, assume that $m=1$.  By \cite[Theorem 1.4.3]{Mabook},
$$
\|u\|_{L^{\frac{d}{n-1}}(\Rn,\nu)} \lesssim \|\mu\|_d^{\frac{n-1}{d}} \int_{\Rn} |\nabla u|\,dx
$$
for every function $u \in W^{1,1}(\rn)$.
Since $L^{\frac{d}{n-1}}(\Rn, \nu) \rightarrow L^{\frac{d}{n-1},\infty}(\Rn,\nu)$,
one as well has that
\begin{equation}\label{E:weak_estimate}
\|u\|_{L^{\frac{d}{n-1},\infty}(\Rn,\nu)} \lesssim \|\mu\|_d^{\frac{n-1}{d}} \int_{\Rn} |\nabla u|\,dx
\end{equation}
for every function $u \in W^{1,1}(\rn)$.  A truncation method introduced by Maz'ya (see also~\cite{MP} for applications to Lorentz-type norms) can be used to show that inequality \eqref{E:weak_estimate} implies, in fact, the stronger estimate
\begin{equation}\label{E:stronger_estimate}
\|u\|_{L^{\frac{d}{n-1},1}(\Rn,\nu)} \lesssim \|\mu\|_d^{\frac{n-1}{d}} \int_{\Rn} |\nabla u|\, dx
\end{equation}
for every function $u \in W^{1,1}(\rn)$. In order to prove inequality~\eqref{E:stronger_estimate}, note that inequality~\eqref{E:weak_estimate} can be rewritten in the form
\begin{equation}\label{E:weak_type_reformulated}
\sup_{\varrho >0}\varrho \,\nu(\{x\in \Rn: |u(x)|\geq \varrho\})^{\frac{n-1}{d}}
\lesssim \|\mu\|_d^{\frac{n-1}{d}} \int_{\Rn} |\nabla u|\,dx.
\end{equation}
Fix $u\in C^\infty_0(\Rn)$, and set
$$
t_k=2^{1-k}\nu (\operatorname{supp} u) \quad \hbox{and} \quad a_k=u^*_\nu(t_k) \quad \hbox{for $k\in \N$,}
$$
where $\lq\lq \operatorname{supp} u"$ stands for support of $u$.
Given $a$ and $b$, with $0<a<b<\infty$, let $\varphi_a^b\colon \R\to[0,\infty)$ be the function defined as
$$
\begin{cases}
\varphi_a^b(s)=0 &\hbox{if $s \leq a$,}\\
\varphi(s)=s-a  &\hbox{if $a < s < b$,}\\
\varphi_a^b(s)=b-a  &\hbox{if $ s\geq  b$.}
\end{cases}
$$
Inasmuch as $\varphi_a^b$ is Lipschitz continuous, standard properties of Sobolev functions ensure that $\varphi_a^b (|u|) \in W^{1,1}(\rn)$, and $|\nabla (\varphi_a^b (|u|) ) |= \chi _{\{ a<|u|<b\}} |\nabla u|$ a.e. in $\rn$.  Thus, an  application of inequality~\eqref{E:weak_type_reformulated} to the function $\varphi_a^b (|u|)$, with $\rho=b-a$, yields
\begin{multline}\label{E:truncation}
(b-a) \nu(\{x\in \Rn: |u(x)| \geq b\})^{\frac{n-1}{d}}
=(b-a) \nu(\{x\in \Rn: \varphi_a^b (|u(x)|)  \geq b-a\})^{\frac{n-1}{d}}
\\ \lesssim \|\mu\|_d^{\frac{n-1}{d}} \int_{\Rn} |\nabla (\varphi_a^b  |u|)|\,dx
\approx \|\mu\|_d^{\frac{n-1}{d}} \int_{\{x\in \Rn: a<|u(x)|<b\}} |\nabla u|\,dx.
\end{multline}
Since $t_k\leq \nu(\{x\in \Rn: |u(x)|\geq a_k\})$, inequality~\eqref{E:truncation}, applied with $a=a_k$ and $b=a_{k+1}$, in its turn tells us that
\begin{equation}\label{gen10}
t_{k+1}^{\frac{n-1}{d}} (a_{k+1}-a_k)
\lesssim \|\mu\|_d^{\frac{n-1}{d}} \int_{\{x\in \Rn: a_k<|u(x)|<a_{k+1}\}} |\nabla u|\,dx
\end{equation}
for $k \in \N$.
On adding inequalities \eqref{gen10} as $k$ ranges in $\N$, we obtain that
$$
\sum_{k=1}^\infty t_{k+1}^{\frac{n-1}{d}} (a_{k+1}-a_k)
\lesssim \|\mu\|_d^{\frac{n-1}{d}} \sum_{k=1}^\infty \int_{\{x\in \Rn: a_k<|u(x)|<a_{k+1}\}} |\nabla u|\, dx
\lesssim \|\mu\|_d^{\frac{n-1}{d}} \int_{\Rn} |\nabla u|\,dx.
$$
Since $a_1=0$, the latter inequality can be rewritten as
$$
\sum_{k=2}^\infty t_k^{\frac{n-1}{d}} a_k - 2^{\frac{1-n}{d}}\sum_{k=1}^\infty t_k^{\frac{n-1}{d}} a_k
=\left(1-2^{\frac{1-n}{d}}\right) \sum_{k=1}^\infty t_k^{\frac{n-1}{d}} a_k
\leq C \|\mu\|_d^{\frac{n-1}{d}} \int_{\Rn} |\nabla u|\,dx.
$$
Thus,
\begin{align*}
  \int_0^{\nu (\operatorname{supp} u)} u_\nu^{*}(t)t^{\frac{n-1}{d}-1}\,dt
  & = \sum_{k=1}^\infty \int_{t_{k+1}}^{t_{k}} u_\nu^{*}(t)t^{\frac{n-1}{d}-1}\,dt
          \leq \sum_{k=1}^\infty u_\nu^{*}(t_{k+1})\int_{t_{k+1}}^{t_{k}} t^{\frac{n-1}{d}-1}\,dt
           \\
  & = \frac{d}{n-1} \sum_{k=1}^\infty u_\nu^{*}(t_{k+1})\Big(t_k^{\frac{n-1}{d}}-t_{k+1}^{\frac{n-1}{d}}\Big)
          \leq \frac{d}{n-1} \sum_{k=1}^\infty u_\nu^{*}(t_{k+1})t_k^{\frac{n-1}{d}}
           \\
  & \leq \frac{d \, 2^{\frac{n-1}{d}}}{n-1} \sum_{k=1}^\infty u_\nu^{*}(t_{k+1})t_{k+1}^{\frac{n-1}{d}}
    \leq \frac{C\, d \, 2^{\frac{n-1}{d}}}{(n-1) (1-2^{\frac{1-n}{d}})} \|\mu\|_d^{\frac{n-1}{d}} \int_{\Rn} |\nabla u|\,dx,
\end{align*}
and~\eqref{E:stronger_estimate} follows.
\\
Let us next assume that $m>1$. For any function $u\in C^\infty_0(\Rn)$, one has that
\begin{equation}\label{gen60}
|u(x)|\lesssim \int_{\Rn} \frac{|\nabla^{m-1} u(y)|}{|x-y|^{n-m+1}}\,dy\quad \text{for a.e. $x\in \Rn$},
\end{equation}
see e.g. \cite[Theorem~1.1.10/2]{Mabook}.
By equation \eqref{E:lorentz-assoc} and Fubini's theorem,
\begin{align*}
\|u\|_{L^{\frac{d}{n-m},1}(\Rn,\nu)}
&\lesssim \left\|\int_{\Rn} \frac{|\nabla^{m-1} u(y)|}{|x-y|^{n-m+1}}\,dy \right\|_{L^{\frac{d}{n-m},1}(\Rn,\nu)}
\approx \sup_{v \not\equiv 0} \frac{\int_{\Rn} |v(x)| \int_{\Rn} \frac{|\nabla^{m-1} u(y)|}{|x-y|^{n-m+1}}\,dy \,d\nu(x)}{\|v\|_{L^{\frac{d}{d-n+m},\infty}(\Rn,\nu)}}\\
&= \sup_{v \not\equiv 0} \frac{\int_{\Rn} |\nabla^{m-1} u(y)| \int_{\Rn} \frac{|v(x)|}{|x-y|^{n-m+1}}\,d\nu(x) \,dy}{\|v\|_{L^{\frac{d}{d-n+m},\infty}(\Rn,\nu)}} \approx \sup_{v \not\equiv 0} \frac{\int_{\Rn} |\nabla^{m-1} u(y)| w(y)\,dy}{\|v\|_{L^{\frac{d}{d-n+m},\infty}(\Rn,\nu)}},
\end{align*}
\textit{\(\)}where $w : \rn \to [0, \infty)$ is the function defined as $w(y)=\int_{\Rn} \frac{|v(x)|}{|x-y|^{n-m+1}}\,d\nu(x)$ for $y \in \rn$. In order to conclude,  it remains to prove that
$$
\| \nabla^{m-1} u \|_{L^1(\Rn,w)}
\lesssim \|\mu\|_d^{\frac{n-m}{d}} \|v\|_{L^{\frac{d}{d-n+m},\infty}(\Rn,\nu)} \int_{\Rn} |\nabla^m u|\, dx
$$
for every function $v \in L^{\frac{d}{d-n+m},\infty}(\Rn,\nu)$.
By the result of the present  theorem for $m=1$, $d=n-1$ and $d\mu(x)=w(x)dx$, which has been proved above,   it suffices to show that
$$
\sup_{z\in \Rn, r>0} \frac{1}{r^{n-1}}\int_{B(z,r)}w(x)\,dx
\lesssim \|\mu\|_d^{\frac{n-m}{d}} \|v\|_{L^{\frac{d}{d-n+m},\infty}(\Rn,\nu)}
$$
for every function $v \in L^{\frac{d}{d-n+m},\infty}(\Rn,\nu)$. Here, $B(z,r)$ is an alternate notation for $B_r(z)$.
Fix $z\in \Rn$ and $r>0$.  By Fubini's theorem and equation \eqref{E:lorentz-assoc},
\begin{align*}
\int_{B(z,r)}w(x)\,dx
&=\int_{B(z,r)} \int_{\Rn} \frac{|v(x)|}{|x-y|^{n-m+1}}\,d\nu(x)\,dy
=\int_{\Rn} |v(x)| \int_{B(z,r)} \frac{\,dy}{|x-y|^{n-m+1}}\,d\nu(x)\\
&\leq \|v\|_{L^{\frac{d}{d-n+m},\infty}(\Rn,\nu)} \left\|\int_{B(z,r)} \frac{\,dy}{|x-y|^{n-m+1}} \right\|_{L^{\frac{d}{n-m},1}(\Rn,\nu)}.
\end{align*}
Now,
\begin{align*}
&\left\|\int_{B(z,r)} \frac{\,dy}{|x-y|^{n-m+1}} \right\|_{L^{\frac{d}{n-m},1}(\Rn,\nu)}\\
&\leq \left\|\chi_{\{x\in \Rn: |x-z|\leq 2r\}} \int_{B(z,r)} \frac{\,dy}{|x-y|^{n-m+1}} \right\|_{L^{\frac{d}{n-m},1}(\Rn,\nu)} +\left\|\chi_{\{x\in \Rn: |x-z|> 2r\}} \int_{B(z,r)} \frac{\,dy}{|x-y|^{n-m+1}} \right\|_{L^{\frac{d}{n-m},1}(\Rn,\nu)}\\
&\lesssim r^{m-1} \big\|\chi_{\{x\in \Rn: |x-z|\leq 2r\}}\big\|_{L^{\frac{d}{n-m},1}(\Rn,\nu)} + \left\|\chi_{\{x\in \Rn: |x-z|> 2r\}} \frac{r^n}{|x-z|^{n-m+1}} \right\|_{L^{\frac{d}{n-m},1}(\Rn,\nu)}.
\end{align*}
Note that  we have made use of inequality \eqref{E:HL} in estimating the first norm in the last inequality, and of the fact that  $|x-y|\approx |x-z|$ if  $|x-z|>2r$ and  $y\in B(z,r)$
in estimating the second norm in the same inequality. One has that
$$
r^{m-1} \|\chi_{\{x\in \Rn: |x-z|\leq 2r\}}\|_{L^{\frac{d}{n-m},1}(\Rn,\nu)}\lesssim r^{m-1} \nu(B(z,2r))^{\frac{n-m}{d}}
\approx r^{m-1} (\mu(B(z,2r)\cap \Omega))^{\frac{n-m}{d}}
\lesssim r^{n-1} \|\mu\|_d^{\frac{n-m}{d}}.
$$
Moreover,
\begin{align*}
&\left\|\chi_{\{x\in \Rn: |x-z|> 2r\}} \frac{r^n}{|x-z|^{n-m+1}} \right\|_{L^{\frac{d}{n-m},1}(\Rn,\nu)}
\\
&\approx r^n \int_0^\infty \nu(\{x\in \Rn: 2r<|x-z|<\varrho^{-\frac{1}{n-m+1}}\})^{\frac{n-m}{d}}\,d\varrho\\
&\leq  r^n \int_0^{\frac{1}{(2r)^{n-m+1}}} \nu(B(z,\varrho^{-\frac{1}{n-m+1}}))^{\frac{n-m}{d}}\,d\varrho = r^n \int_0^{\frac{1}{(2r)^{n-m+1}}} \mu(B(z,\varrho^{-\frac{1}{n-m+1}})\cap \Omega)^{\frac{n-m}{d}}\,d\varrho\\
&\lesssim r^n \|\mu\|_d^{\frac{n-m}{d}} \int_0^{\frac{1}{(2r)^{n-m+1}}} \varrho^{-\frac{n-m}{n-m+1}}\,d\varrho\lesssim r^{n-1} \|\mu\|_d^{\frac{n-m}{d}}.
\end{align*}
Inequality~\eqref{E:whole_rn} is thus established for every function $u\in C^\infty_0(\rn)$.
Now, let  $u \in W^{m,1}_0(\o)$. As observed in Section \ref{back}, the continuation $\overline u$ of $u$ to $\rn$ by $0$ outside $\Omega$ belongs to $W^{m,1}(\rn)$. Thus, the function $\overline u$   can be approximated in $W^{m,1}(\rn)$ by a sequence
of functions $\{\overline u_k\} \subset C^\infty _0(\rn)$ in such a way that $\overline u_k \to \overline u$ at every Lebesgue point of $\overline u$. In particular, $\overline u_k \to u$ at every point where $u$ is continuous, and hence everywhere in $\o$ if $u$ is continuous in $\Omega$. An application of inequality \eqref{E:whole_rn}, with $u$ replaced by $\overline u_k-\overline u_m$,  ensures that $\{\overline u_k\}$ is a Cauchy sequence in
$L^{\frac{d}{n-m},1}(\Omega,\mu)$. One can define $T_\mu u$ as the limit of this sequence in $L^{\frac{d}{n-m},1}(\Omega,\mu)$, since such a limit is easily seen to be independent of the approximating sequence for $\overline u$. Also, $T_\mu$ turns out to be a bounded linear operator  on $W^{m,1}_0(\o)$, which agrees with the identity on $W^{m,1}_0(\o)\cap C(\o)$. The proof of part (i) is complete.
\\
Let us  now deal with part (ii). Any bounded domain $\o$ with the cone property can be decomposed into a finite union of bounded Lipschitz domains $\{\o_j\}$, with $j=1, \dots , J$ (see, e.g.,~\cite[Lemma~1.1.1/9]{Mabook}). Thus, each open set  $\o_j$ is an extension domain, in the sense of \cite[Chapter 6, Section~3.3]{stein}, from $W^{m,1}(\o_j)$ into $W^{m,1}_0(B_j)$ for a suitable open ball $B_j$ such that $\overline \o_j \subset B_j$ .
 This implies that, for each $j$, there exists a bounded linear operator $\mathcal E_j : W^{m,1}(\Omega_j) \to W^{m,1}_0(B_j)$ such that $\mathcal E_j u=u$ in $\Omega_j$ for every $u\in W^{m,1}(\Omega_j)$. By part (i), if $u\in W^{m,1}(\o)\cap C^\infty (\o)$, then
\begin{align}
\label{july1}
\|u\|_{L^{\frac{d}{n-m},1}(\Omega,\mu)}
    & \leq \sum _{j=1}^J\|u\|_{L^{\frac{d}{n-m},1}(\Omega_j,\mu)} = \sum _{j=1}^J \|T_{\nu}(\mathcal E_j u)\|_{L^{\frac{d}{n-m},1}(\Omega_j,\nu)} \leq \sum _{j=1}^J \|T_{\nu}(\mathcal E_j u)\|_{L^{\frac{d}{n-m},1}(B_j,\nu)}
            \\
    \nonumber & \lesssim  \sum _{j=1}^J\|\nabla ^m\mathcal E_j u\|_{L^{1}(B_j)}
\lesssim \sum _{j=1}^J\|u\|_{W^{m,1}(\o_j)} \lesssim\|u\|_{W^{m,1}(\o)}.
\end{align}
On the other hand, every function $u \in W^{m,1}(\o)$ can be approximated by a sequence $\{u_k\}\subset  W^{m,1}(\o)\cap C^\infty (\o)$ in such a way that, in addition, $u_k\to u$ at every Lebesgue point of $u$. The conclusion now follows as at the end of the proof of part (i).
\\
Finally, consider part (iii). If $\o$ is a bounded Lipschitz domain, then it is an extension domain in the sense specified above. Let $\mathcal E : W^{m,1}(\o) \to W^{m,1}_0(B)$ be an extension operator, for a suitable ball $B \supset \overline \o$. Denote by $\nu$ the Borel measure defined in $B$ as
$$\nu (E) = \mu (E\cap \overline \o)$$
for every Borel set $E\subset B$. The measure $\nu$ satisfies condition \eqref{E:supbar}. Suppose, for the time being,  that $u \in W^{m,1}(\Omega)\cap C(\overline \o)$. Inasmuch as $|\partial \Omega|=0$,   we may assume that $\mathcal Eu=u$ in $\overline \o$, whence, in particular, $\mathcal Eu \in C(\overline \o)$.
Therefore, by part (i),
\begin{align}
\label{july2}
\| u\|_{L^{\frac{d}{n-m},1}(\overline \o,\mu)}  = \| T_{\nu}(\mathcal E u)\|_{L^{\frac{d}{n-m},1}(\overline \o,\nu)} = \|T_{\nu}(\mathcal E u)\|_{L^{\frac{d}{n-m},1}(B,\nu)}  \lesssim\|\nabla ^m \mathcal E u\|_{L^1(B)} \lesssim \|u\|_{W^{m,1}(\o)}.
\end{align}
Next, since $\Omega$ is a bounded Lipschitz domain (a \lq\lq continuous domain" would in fact suffice), given any  function $u \in W^{m,1}(\o)$, by \cite[Theorem~1.1.6/2]{Mabook}, there exists a sequence of functions $\{u_k\}\subset C^\infty_0(\rn)$ such that $u_k \to u$ in $W^{m,1}(\o)$. An application of inequality \eqref{july2} with $u$ replaced by $u_k - u_m$, for $k, m \in \mathbb N$, tells us that $\{u_k\}$, restricted to $\overline \o$, is a Cauchy sequence in $L^{\frac{d}{n-m},1}(\overline \o,\mu)$. One can define $T_\mu u$ as the limit of this sequence, whence inequality \eqref{july2}, namely \eqref{E:endpoint_estimatebar}, holds with  $u$ replaced by $T_\mu u$.
It is easily verified, on making use of inequality \eqref{july2} again, that this limit is independent of the sequence $\{u_k\}$ approximating $u$. Moreover, the operator $T_\mu$ is linear and bounded. In particular, if $u \in C(\overline \o)$, then the sequence $\{u_k\}\subset C^\infty_0(\rn)$ that approximates $u$ in $W^{m,1}(\o)$ can be chosen in such a way that $u_k \to u$ pointwise in $\overline \o$ (see e.g. the proof of \cite[Theorem~1.1.6/2]{Mabook}). Hence, $T_\mu u = u$ in $\overline \Omega$ whenever $u \in  W^{m,1}(\o)\cap C(\overline \o)$.
\end{proof}

\subsection{Case $0 < d < n-m$}\label{borderslow}

In this subsection, we assume that
\begin{equation}\label{sub}
 d \in (0, n-m).
\end{equation}
The situation is now substantially different from that discussed in Subsection \ref{borderfast},
since, under \eqref{sub}, a trace operator with respect to a $d$-Frostman measure  need not even be defined in  $W^m_0X(\Omega)$ or  $W^mX(\Omega)$.
The existence of such an operator
is guaranteed if  the space $X(\Omega)$ is included in  $L^{\frac{n-d}m,1}(\Omega)$, and hence $W^m_0X(\Omega)\to W^m_0L^{\frac{n-d}m,1}(\Omega)$ and $W^mX(\Omega)\to W^mL^{\frac{n-d}m,1}(\Omega)$. The optimality of the space
$L^{\frac{n-d}m,1}(\Omega)$, among all rearrangement-invariant spaces, for a trace to be well defined, is shown in \cite{CP2010} when $d \in \N$ and the measure is $\mu_2$ as defined in \eqref{mu2}.
\par
A precise trace embedding for the space $W^m_0L^{\frac{n-d}m,1}(\Omega)$, or $W^mL^{\frac{n-d}m,1}(\Omega)$, follows from
a recent result of \cite{KK}   (which was earlier proved  in~\cite{CP-Trans} in the case of the measure $\mu_2$), and is stated in the next theorem.

\begin{theorem} {\rm{\bf [Case $0< d <n-m$: optimal trace embedding for $W\sp{m}L\sp{\frac{n-d}{m},1}(\Omega)$]}}  \label{L:endpointsub}
Let $\Omega$ be an open set with finite Lebesgue measure in $\rn$, $n\geq 2$, let $m\in \N$, with $m<n$, and let $d \in (0, n-m)$.
\\
\textup{(}i\textup{)} Let
$\mu$ be a finite Borel measure on $\Omega$ fulfilling \eqref{E:sup}. Then
\begin{equation}\label{E:endpointsub_estimate0}
\|u\|_{L\sp{\frac{n-d}{m}}(\Omega,\mu)}
\leq C
\|\nabla ^m u\|_{L\sp{\frac{n-d}{m},1}(\Omega)}
\end{equation}
for some constant $C$ and every $u\in W\sp{m}_0L\sp{\frac{n-d}{m},1}(\Omega)$.
\\
\textup{(}ii\textup{)} Assume, in addition,  that $\Omega$ is  bounded   and has  the cone property.  Let
$\mu$ be a Borel measure on $\o$ fulfilling \eqref{E:sup}. Then
\begin{equation}\label{E:endpointsub_estimate}
\|u\|_{L\sp{\frac{n-d}{m}}(\Omega,\mu)}
\leq C
\|u\|_{W\sp{m}L\sp{\frac{n-d}{m},1}(\Omega)}
\end{equation}
for some constant $C$ and every $u\in W\sp{m}L\sp{\frac{n-d}{m},1}(\Omega)$.
\\
\textup{(}iii\textup{)} Assume that $\Omega$ is  a bounded Lipschitz domain.
Let
$\mu$ be a Borel measure on $\overline \o$ fulfilling \eqref{E:supbar}. Then
\begin{equation}\label{E:endpointsub_estimatebar}
\|u\|_{L\sp{\frac{n-d}{m}}(\overline{\Omega},\mu)}
\leq C
\|u\|_{W\sp{m}L\sp{\frac{n-d}{m},1}(\Omega)}
\end{equation}
for some constant $C$ and every $u\in W\sp{m}L\sp{\frac{n-d}{m},1}(\Omega)$.
\end{theorem}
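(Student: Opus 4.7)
My plan is to mirror the three-step structure of the proof of Theorem~\ref{P:endpoint}: establish part~(i) directly by combining a Riesz-potential pointwise bound with a trace estimate at the critical exponent, and then bootstrap parts~(ii) and~(iii) by the same Stein-type extension and density arguments used there.

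\textbf{Part (i).} I would set $\|\mu\|_d$ as in \eqref{normmu} and extend $\mu$ to a Borel measure $\nu$ on $\Rn$ via $\nu(E)=\mu(E\cap\Omega)$; since $\nu$ has the same $d$-Frostman constant as $\mu$, I can work on all of $\Rn$. The first step is to prove
$$\|u\|_{L^{\frac{n-d}{m}}(\Rn,\nu)} \lesssim \|\mu\|_d^{m/(n-d)}\,\|\nabla^m u\|_{L^{\frac{n-d}{m},1}(\Rn)}, \qquad u\in C^\infty_0(\Rn),$$
by combining the classical pointwise Riesz representation
$$|u(x)| \lesssim \int_{\Rn}\frac{|\nabla^m u(y)|}{|x-y|^{n-m}}\,dy, \qquad x\in\Rn,$$
with the critical-exponent Riesz-potential trace inequality
$$\left\|\int_{\Rn}\frac{f(y)}{|x-y|^{n-m}}\,dy\right\|_{L^{\frac{n-d}{m}}(\Rn,\nu)} \lesssim \|\mu\|_d^{m/(n-d)}\,\|f\|_{L^{\frac{n-d}{m},1}(\Rn)},$$
which I would quote directly from \cite{KK}. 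Once this is in hand, I would pass to a general $u\in W^m_0 L^{\frac{n-d}{m},1}(\Omega)$ by extending it by zero to $\overline u\in W^{m,1}(\Rn)$ (as noted in Section~\ref{back}), approximating $\overline u$ by mollification with a sequence $\{\overline u_k\}\subset C^\infty_0(\Rn)$ that converges to $\overline u$ both in $W^m L^{\frac{n-d}{m},1}(\Rn)$ (using absolute continuity of the Lorentz norm, as $(n-d)/m<\infty$) and pointwise at every Lebesgue point of $\overline u$, applying the displayed inequality to the differences $\overline u_k-\overline u_j$ to see that $\{\overline u_k\}$ is Cauchy in $L^{\frac{n-d}{m}}(\Omega,\mu)$, and defining $T_\mu u$ to be the limit, with independence of the approximating sequence and the identity $T_\mu u=u$ on $C(\Omega)$ checked exactly as in the endgame of Theorem~\ref{P:endpoint}(i).

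\textbf{Parts (ii) and (iii).} These I would reduce to part~(i) by the very same extension-operator strategy employed in the analogous parts of Theorem~\ref{P:endpoint}, with $W^{m,1}$ replaced throughout by $W^m L^{\frac{n-d}{m},1}$ and the target space replaced by $L^{\frac{n-d}{m}}(\mu)$. For~(ii) I would decompose $\Omega$ into finitely many bounded Lipschitz subdomains via the cone property and invoke, on each piece, a Stein-type extension operator into $W^m_0 L^{\frac{n-d}{m},1}$ of a larger ball; its boundedness on this Lorentz-based Sobolev space follows by real interpolation from the boundedness of Stein's operator on every $W^{m,p}$. For~(iii) I would use a single global Stein extension into $W^m_0 L^{\frac{n-d}{m},1}(B)$ with $B\supset\overline\Omega$, set $\nu(E)=\mu(E\cap\overline\Omega)$, apply part~(i) on $B$ with $\nu$, and conclude by the standard density argument, using that a bounded Lipschitz domain is continuous and hence smooth functions on $\Rn$, restricted to $\overline\Omega$, are dense in $W^m L^{\frac{n-d}{m},1}(\Omega)$.

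\textbf{Where the difficulty sits.} The serious analytic content is entirely concentrated in the critical-exponent Riesz-potential trace inequality invoked from \cite{KK}: the exponent $p=(n-d)/m$ is precisely the value at which the classical Adams bound $\|I_m f\|_{L^{dp/(n-mp)}(\mu)}\lesssim\|f\|_{L^p}$ degenerates, and upgrading the source norm from $L^p$ to $L^{p,1}$ is both necessary and delicate. Everything else — the pointwise representation, the extension operators, and the density passage — is routine once that endpoint estimate is available.
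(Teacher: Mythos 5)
Your proposal follows essentially the same route as the paper: extend $\mu$ to a Borel measure $\nu$ on $\Rn$, combine the pointwise Riesz-potential representation $|u| \lesssim I_m(|\nabla^m u|)$ with the critical-exponent trace inequality for $I_m$ from \cite{KK}, then pass to general $u$ by extension-by-zero and approximation for part~(i), and reduce parts~(ii) and~(iii) to part~(i) via finite Lipschitz decomposition and extension operators plus density. The one genuine (if minor) deviation is your justification of the boundedness of the extension operator on $W^m L^{\frac{n-d}{m},1}$: you propose to obtain it by real interpolation from the $W^{m,p}$ scale, whereas the paper cites it directly from \cite[Theorem~4.1]{CR}. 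Your interpolation route is sound — since $(n-d)/m>1$ lies strictly between $1$ and $\infty$, one can interpolate between two classical Sobolev exponents, and the identification of the interpolation space as $W^m L^{p,1}$ is exactly the DeVore--Scherer result the paper already invokes for the $K$-functional formulas \eqref{E:fx2.14}--\eqref{gen41} — so this is a legitimate alternative to the citation; it buys self-containment at the cost of slightly more argument, and one should note that the interpolation also needs to respect the $W^m_0$ structure on the target side, which is fine because the extension operator already lands in compactly supported functions.
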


Our next aim is to show that the range spaces in Theorem~\ref{L:endpointsub} cannot be improved under the sole assumption \eqref{E:sup} or \eqref{E:supbar}.
 More precisely,  Proposition \ref{sharpness} below demonstrates that, at least when $d \in \N$, there exist  measures $\mu$  satisfying condition~\eqref{E:sup} for which  the target space $L\sp{\frac{n-d}{m}}(\Omega,\mu)$ 
 is optimal   among all rearrangement-invariant spaces. An example in this connection is provided by the measure   $\mu_2$   given by \eqref{mu2}.
Incidentally, let us point out    that  this conclusion is in sharp contrast with all other known results about optimal target spaces in Sobolev embeddings, where the optimal target space is always of Lorentz type, with second exponent different from the first one.

\begin{prop}\label{sharpness}{\rm{\bf [Sharpness of Theorem \ref{L:endpointsub}]}}
Let $\Omega$ be an open set  in $\rn$, $n\geq 2$, as in the respective parts \textup{(}i\textup{)} and \textup{(}ii\textup{)} of Theorem~\ref{L:endpointsub}.
Let  $m, d \in \N$ be such that $m<n$ and  $d \in (0, n-m)$. Let $\mu_2 $ be the measure defined by \eqref{mu2}. Then the target norm $\|\cdot\|_{L\sp{\frac{n-d}{m}}(\Omega,\mu_2)}$ in inequalities  \eqref{E:endpointsub_estimate0} and \eqref{E:endpointsub_estimate} is optimal among all rearrangement-invariant norms.
\end{prop}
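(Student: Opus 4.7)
The plan is to prove the optimality of $L^{\frac{n-d}{m}}(\Omega,\mu_2)$ by exhibiting a sharp family of test functions via an extension operator. By the representation-space formalism and property~(P6), it suffices to show: whenever a rearrangement-invariant function norm $\|\cdot\|_{Y(0,\mu_2(\Omega))}$ satisfies
\[
\|u\|_{Y(\Omega,\mu_2)}\leq C\,\|\nabla^m u\|_{L^{\frac{n-d}{m},1}(\Omega)}\qquad\text{for every $u\in W^m_0L^{\frac{n-d}{m},1}(\Omega)$}
\]
(and the analogous inequality for part (ii)), then $L^{\frac{n-d}{m}}(0,\mu_2(\Omega))\to Y(0,\mu_2(\Omega))$. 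The latter reduces to constructing, for every non-increasing $f\in L^{\frac{n-d}{m}}(0,\mu_2(\Omega))$, a test function $u$ with $u^*_{\mu_2}=f$ and $\|\nabla^m u\|_{L^{\frac{n-d}{m},1}(\Omega)}\lesssim\|f\|_{L^{\frac{n-d}{m}}(0,\mu_2(\Omega))}$.

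Fix $x_0\in\Omega_d$; after a rigid motion of $\rn$, take $x_0=0$ and $V=\R^d\times\{0\}$, with $B_{2r_0}(0)\subset\Omega$. Let $\phi:V\to[0,\infty)$ be the radial decreasing rearrangement of $f$ on $V$, truncated to have support in a small $d$-ball so that $\phi^*_d=f$. Take $u=(E\phi)\,\eta$, where $\eta\in C^\infty_c(B_{2r_0})$ equals $1$ on $B_{r_0}$, and $E$ is a smooth convolution-type extension operator realizing $\phi$ as the trace of $E\phi$ on $V$ (for instance, a Poisson-type extension of the form $E\phi(x',x'')=\int_{\R^d}\phi(y)K(x'-y,|x''|)\,dy$ with $K$ a smoothed approximate-identity kernel in $x'$ of scale $|x''|$). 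By construction $u\in W^m_0L^{\frac{n-d}{m},1}(\Omega)$, $u|_V=\phi$, and hence $u^*_{\mu_2}=f$. Then
\[
\|f\|_{Y(0,\mu_2(\Omega))}=\|u\|_{Y(\Omega,\mu_2)}\leq C\|\nabla^m u\|_{L^{\frac{n-d}{m},1}(\Omega)}\lesssim\|f\|_{L^{\frac{n-d}{m}}(0,\mu_2(\Omega))},
\]
as desired.

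The main obstacle is the last inequality, namely the sharp extension bound $\|\nabla^m E\phi\|_{L^{\frac{n-d}{m},1}(\R^n)}\lesssim\|\phi\|_{L^{\frac{n-d}{m}}(V)}$. Expanding $\nabla^m E\phi$ via the product and chain rules, every term is a convolution of $\phi$ with a Calder\'on--Zygmund-type kernel of weak-type $(L^{\frac{n-d}{m}}(V),L^{\frac{n-d}{m},\infty}(\R^n))$, with mass scaling as $|x''|^{-m}$ on the strip $|x'-y|\lesssim|x''|$. The weak endpoint has to be upgraded to the strong Lorentz-$1$ endpoint at the critical exponent where standard Marcinkiewicz interpolation falls short; this can be handled by a truncation argument in the spirit of the proof of Theorem~\ref{P:endpoint}(i) (replacing $\phi$ by its piecewise-linear chops $\varphi_a^b(\phi)$, applying the weak-type bound level by level, and summing over a geometric sequence of thresholds), or equivalently by routing the argument through the one-dimensional Hardy-type reduction principles developed in later sections of the paper, in parallel with the deduction of Proposition~\ref{sharpness fast} from Theorem~\ref{T:main-example}.
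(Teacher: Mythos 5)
The central step of your argument---realizing \emph{every} non-increasing $f\in L^{\frac{n-d}{m}}(0,\mu_2(\Omega))$ as the $\mu_2$-rearrangement of a test function $u$ with $\|\nabla^m u\|_{L^{\frac{n-d}{m},1}(\Omega)}\lesssim\|f\|_{L^{\frac{n-d}{m}}}$---amounts to asserting that your extension operator $E$ is bounded from $L^{\frac{n-d}{m}}(V)$ into $W^m L^{\frac{n-d}{m},1}(\rn)$, and this is false. Take for instance $n=3$, $m=1$, $d=1$, so $\tfrac{n-d}{m}=2$, and $\phi(y)=|y|^{-1/2}\log(e/|y|)^{-\beta}$ on $(-\tfrac12,\tfrac12)\subset V=\R$, with $\beta\in(\tfrac12,1]$. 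Then $\phi\in L^2(V)\setminus L^{2,1}(V)$. For any Poisson-type extension, near the origin in $\R^3$ one has $|\nabla E\phi|(x)\approx |x|^{-3/2}\log(e/|x|)^{-\beta}$, whence $|\nabla E\phi|^*(t)\approx t^{-1/2}\log(e/t)^{-\beta}$ and
\begin{equation*}
\|\nabla E\phi\|_{L^{2,1}(\R^3)}\approx\int_0^1 t^{-1}\log(e/t)^{-\beta}\,dt=\infty
\end{equation*}
for $\beta\leq1$, while $\|\phi\|_{L^2(V)}<\infty$. So the inequality you need in your last display simply does not hold, and no Maz'ya-type truncation can repair it: truncating $\phi$ does not relate $\nabla^m E(\varphi_a^b(\phi))$ to a truncation of $\nabla^m E\phi$ in the way that the truncation trick requires, and the divergence above is in any case unconditional.

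What this reflects is that the trace space $T_{\mu_2}\big(W^m L^{\frac{n-d}{m},1}(\Omega)\big)$ is a \emph{proper} subset of $L^{\frac{n-d}{m}}(\Omega,\mu_2)$: optimality of $L^{\frac{n-d}{m}}$ among rearrangement-invariant target spaces does not mean that every function in that space is a trace. This is exactly why the paper does not attempt to realize an arbitrary non-increasing profile. Instead it builds a distinguished family of test functions, indexed by $f\in\Lambda_{n,d}$, whose traces are (up to equimeasurability, after sending $\alpha\to0^+$) the functions $(Hf)^m$, and then appeals to the one-dimensional Lemma~\ref{L:operator}(iii): uniform control of $\{Hf:f\in\Lambda_{n,d}\}$ in a rearrangement-invariant norm $X(0,1)$ already forces $L^{n-d}(0,1)\to X(0,1)$. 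That lemma is what supplies the ``density'' of the constructed traces inside $L^{\frac{n-d}{m}}$ in the precise sense needed, and it is the missing ingredient that your surjectivity-type reduction cannot provide.
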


Unlike the conclusions for $d\in [n-m, n]$ of Theorem \ref{P:endpoint}, which
are optimal for any measure $\mu$ satisfying \eqref{E:sup} and \eqref{E:inf}, or \eqref{E:supbar} and \eqref{E:infbar}, the results  for $d\in (0, n-m)$ of Theorem \ref{L:endpointsub}
can yet be augmented if extra information
 on $\mu$, besides \eqref{E:sup} and  \eqref{E:inf}, or \eqref{E:supbar} and \eqref{E:infbar},
 is available.
%
%
This is the case, for instance, when the measure $\mu$
 is absolutely continuous with respect to Lebesgue measure, with a radially decreasing density with respect to some point $x_0 \in \Omega$. Namely, when
\begin{equation}\label{E:mu-radial}
d\mu (x) = g(|x-x_0|) dx
\end{equation}
for some non-increasing function $g \colon (0, \infty) \to [0, \infty)$. Indeed, the
space $L^{\frac{n-d}m}$ can be replaced by the strictly smaller space $L^{\frac{n-d}m, 1}$
in inequalities \eqref{E:endpointsub_estimate0}--\eqref{E:endpointsub_estimatebar}.

\begin{prop}{\rm{\bf [Improved trace embedding for $W\sp{m}L\sp{\frac{n-d}{m},1}(\Omega)$ for radially decreasing densities]}}\label{P:radial-improvement}
Let $\Omega$ be an open set  with finite Lebesgue measure in $\Rn$,  $n\geq 2$, let $m\in \N$ with $m<n$, and let $d \in (0, n-m)$.
\\
\textup{(}i\textup{)} Let $\mu$ be a finite Borel measure on $\Omega$ fulfilling \eqref{E:sup} and having the form \eqref{E:mu-radial} for  some  $x_0\in\Omega$. Then
\begin{equation}\label{E:radial-improvement-0}
\|u\|_{L\sp{\frac{n-d}{m},1}(\Omega,\mu)}
\leq C
\|\nabla ^m u\|_{L\sp{\frac{n-d}{m},1}(\Omega)}
\end{equation}
for some constant $C$ and every $u\in W\sp{m}_0L\sp{\frac{n-d}{m},1}(\Omega)$.
\\
\textup{(}ii\textup{)} Assume, in addition, that  $\Omega$ is bounded and has  the cone property. Let $\mu$ be a Borel measure on $\o$ fulfilling \eqref{E:sup}
and having the form \eqref{E:mu-radial} for  some  $x_0\in\Omega$.
 Then
\begin{equation}\label{E:radial-improvement}
\|u\|_{L\sp{\frac{n-d}{m},1}(\Omega,\mu)}
\leq C
\|u\|_{W\sp{m}L\sp{\frac{n-d}{m},1}(\Omega)}
\end{equation}
for some constant $C$ and every $u\in W\sp{m}L\sp{\frac{n-d}{m},1}(\Omega)$.
\\
\textup{(}iii\textup{)}  Assume that $\Omega$ is a bounded Lipschitz domain. Let $\mu$ be a Borel measure on $\overline \o$ fulfilling \eqref{E:sup}
and having the form \eqref{E:mu-radial} for  some  $x_0\in \overline \Omega$.  Then
\begin{equation}\label{E:radial-improvement-bar}
\|u\|_{L\sp{\frac{n-d}{m},1}(\overline{\Omega},\mu)}
\leq C
\|u\|_{W\sp{m}L\sp{\frac{n-d}{m},1}(\Omega)}
\end{equation}
for some constant $C$ and every $u\in W\sp{m}L\sp{\frac{n-d}{m},1}(\Omega)$.
\\ Moreover, if, in addition, $\mu$ satisfies condition \eqref{E:inf} in cases \textup{(}i\textup{)} and \textup{(}ii\textup{)}, or \eqref{E:infbar} in case \textup{(}iii\textup{)}, then the norm  $\|\,\cdot \,\|_{L\sp{\frac{n-d}{m},1}}$ is optimal  in the pertaining inequality.
\end{prop}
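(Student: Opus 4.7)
My strategy rests on the following pointwise comparison of rearrangements, valid for every $u$:
$$u^*_\mu(t) \leq u^*(c\, t^{n/d}) \quad \text{for }t \in (0, \mu(\Omega)),$$
where $c$ depends only on $n$, $d$ and the constant $\|\mu\|_d$ from \eqref{normmu}. Once this bound is in hand, a single change of variable coupled with the classical sharp Sobolev--Lorentz embedding on $\Omega$ (endowed with Lebesgue measure) will yield \eqref{E:radial-improvement-0}--\eqref{E:radial-improvement-bar}, as well as the optimality assertion.

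To obtain the rearrangement bound the key observation is that $g(|\cdot-x_0|)$ is already its own symmetric decreasing rearrangement about $x_0$. Hence the Hardy--Littlewood inequality \eqref{E:HL} applied to $\chi_E$ and $g(|\cdot-x_0|)$ gives $\mu(E) \leq \mu(B_r(x_0))$ for every measurable $E \subset \Omega$, with $r$ chosen so that $|B_r(x_0)| = |E|$. Combined with \eqref{E:sup} (or \eqref{E:supbar} in case (iii)) this yields the distribution-function estimate $\mu(E) \leq C\,|E|^{d/n}$; specializing to the super-level sets of $u$ and unwinding the definition of $u^*_\mu$ produces the claim, the argument being insensitive to whether $x_0 \in \Omega$ or $x_0 \in \overline\Omega$.

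Next, the substitution $s = c\, t^{n/d}$ transforms
$$\|u\|_{L^{\frac{n-d}{m},1}(\Omega,\mu)} = \int_0^{\mu(\Omega)} t^{\frac{m}{n-d}-1}u^*_\mu(t)\,dt \leq \int_0^{\mu(\Omega)}t^{\frac{m}{n-d}-1}u^*(c\, t^{n/d})\,dt \lesssim \|u\|_{L^{\frac{n(n-d)}{md},1}(\Omega)}.$$
Since $p := (n-d)/m > 1$ and $mp = n-d < n$, the classical sharp Sobolev--Lorentz embedding
$$W^m_0 L^{\frac{n-d}{m},1}(\Omega) \hookrightarrow L^{\frac{n(n-d)}{md},1}(\Omega)$$
(which follows, e.g., from O'Neil's bounds for Riesz potentials or from the reduction principle in \cite{CPS,KP}) then closes part (i). Parts (ii) and (iii) follow from the very same extension-to-$\rn$ or extension-to-a-ball devices used in the proof of Theorem~\ref{P:endpoint}, with the auxiliary measure $\nu(E) = \mu(E \cap \overline{\Omega})$ on a ball containing $\overline\Omega$ handling case (iii).

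For the optimality statement, assume \eqref{E:inf} (or \eqref{E:infbar} in part (iii)), so that $\mu(B_r(x_0)) \approx r^d$ for $r \in (0,R]$. Given any non-increasing $f \geq 0$ on $(0, \mu(\Omega))$, I would construct a radially decreasing test function $u$ centered at $x_0$ satisfying $u^*_\mu(t) \approx f(t)$ and $\|u\|_{W^m_0 L^{\frac{n-d}{m},1}(\Omega)} \lesssim \|f\|_{L^{\frac{n-d}{m},1}(0,\mu(\Omega))}$. Setting $\psi(s) := f(c_1 s^{d/n})$ with $c_1$ calibrated so that $\|\psi\|_{L^{\frac{n(n-d)}{md},1}(0,|\Omega|)} \approx \|f\|_{L^{\frac{n-d}{m},1}(0,\mu(\Omega))}$ (an immediate consequence of the same change of variable, now used in reverse), I would invoke the known optimality of $L^{\frac{n(n-d)}{md},1}$ as target of the Sobolev--Lorentz embedding on a ball to produce a radial $u$ supported near $x_0$ with $u^* \approx \psi$; a direct computation exploiting $\mu(B_r(x_0)) \gtrsim r^d$ then delivers $u^*_\mu \approx f$. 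Testing a hypothetical inequality $\|u\|_{Y(\Omega,\mu)} \leq C\|u\|_{W^m L^{\frac{n-d}{m},1}(\Omega)}$ with this $u$ finally yields $\|f\|_{Y(0,L)} \lesssim \|f\|_{L^{\frac{n-d}{m},1}(0,L)}$, forcing $L^{\frac{n-d}{m},1}$ to be optimal. The principal technical hurdle will be guaranteeing that the Sobolev extremal can be taken \emph{radial} centered at $x_0$: for $m=1$ this is handled by the Pólya--Szegő principle, but for higher-order $m$ one must either construct $u$ explicitly via an iterated Riesz-potential representation with radial datum, or assemble $u$ from a layer-cake decomposition of $f$ into smooth radial bumps whose $W^m L^{\frac{n-d}{m},1}$ norms sum correctly.
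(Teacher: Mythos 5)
Your overall route is the same as the paper's, merely specialized to the single Lorentz pair $X=Y=L^{\frac{n-d}{m},1}$: the paper proves Proposition~\ref{P:radial-improvement} by invoking the general Theorem~\ref{T:radial}, whose proof rests exactly on the rearrangement estimate $u^*_\mu(t)\leq u^*(ct^{n/d})$, a change of variables (packaged there as the function norm $\|f\|_{Z(0,1)}=\|f^*(t^{n/d})\|_{Y(0,1)}$), and the Lebesgue-measure Sobolev inequality; optimality is obtained from Theorem~\ref{T:necessity}, which constructs exactly the radial iterated-Hardy test functions you gesture at. So the ideas match and the computation $\frac{np}{d}=\frac{n(n-d)}{md}$ with $p=\frac{n-d}{m}$ is right.

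There is, however, a genuine gap in the way you justify the rearrangement estimate, and the sentence \lq\lq the argument being insensitive to whether $x_0\in\Omega$ or $x_0\in\overline\Omega$" masks it. Hardy--Littlewood applied to $\chi_E$ and $g(|\cdot-x_0|)$ gives
\[
\mu(E)=\int_E g(|x-x_0|)\,dx\;\leq\;\int_{B_r(x_0)} g(|x-x_0|)\,dx,
\]
an integral of the radial density over the \emph{entire} ball in $\rn$, not the quantity $\mu(B_r(x_0))=\mu(B_r(x_0)\cap\Omega)$ (respectively $\cap\,\overline\Omega$) that the hypothesis \eqref{E:sup}/\eqref{E:supbar} actually controls. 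To pass from one to the other you must show $\int_{B_r(x_0)}g\lesssim\int_{B_r(x_0)\cap\overline\Omega}g$, and that step uses the radial decrease of $g$ together with a quantitative interior-ball/annulus property of $\Omega$; the paper explicitly invokes the Lipschitz assumption at precisely this point in case (iii), and the paper labels the corresponding argument in cases (i)--(ii) \lq\lq analogous, and even simpler" rather than identical (there, one splits into small radii, where $B_r(x_0)\subset\Omega$ since $x_0\in\Omega$, and the finitely many larger radii $r\leq(|\Omega|/\omega_n)^{1/n}$, where the decrease of $g$ and $d\leq n$ are used). Without this bridge the chain $\mu(E)\lesssim |E|^{d/n}$ does not follow from what you wrote. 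Once that is patched, your change of variable, the classical sharp embedding $W^m_0L^{\frac{n-d}{m},1}\hookrightarrow L^{\frac{n(n-d)}{md},1}$, the extension operators for (ii)--(iii), and the radial test-function construction for optimality are all sound and align with the paper.
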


A proof of Proposition~\ref{P:radial-improvement} will be presented as an application    of Theorem~\ref{T:radial}, Section \ref{low}.

\smallskip

\par
As mentioned in Section \ref{intro}, the picture described above shows that,  when $d \in (0, n-m)$, the couple of conditions \eqref{E:sup} and \eqref{E:inf}, or \eqref{E:supbar} and \eqref{E:infbar}, is not sufficient to characterize the optimal target in Sobolev trace emebddings. For instance,  both the measure $\mu_1$, given by \eqref{mu1}, and the measure $\mu_2$, defined as in \eqref{mu2},
%
%
%
%
satisfy  \eqref{E:sup} and \eqref{E:inf}. However, by Proposition \ref{sharpness}, the space $L^{\frac{n-d}{m}}(\Omega,\mu_2)$ is optimal in the embedding
%
\begin{equation}
    \label{gen62}
W^mL^{\frac{n-d}{m},1}(\Omega) \to L^{\frac{n-d}{m}}(\Omega,\mu_2)
\end{equation}
for the measure $\mu_2$, whereas, by Proposition \ref{P:radial-improvement}, the stronger  embedding
%
\begin{equation}
    \label{gen63}
W^mL^{\frac{n-d}{m},1}(\Omega) \to L^{\frac{n-d}{m},1}(\Omega,\mu_1)
\end{equation}
holds for the measure $\mu_1$, the space  $L^{\frac{n-d}{m},1}(\Omega,\mu_1)$ being optimal in this case.

%
%

\begin{proof}[Proof of Theorem \ref{L:endpointsub}] Consider part (i). Let $\nu$ be the measure on $\rn$ defined as in \eqref{extmeas}. Analogously to inequality \eqref{gen60}, one has that
\begin{equation}\label{riesz}
|u(x)|
\lesssim I_m(|\nabla\sp m u|)(x)
\quad \text{for $x \in \rn$,}
\end{equation}
for every function $u \in C^\infty _0(\rn)$, where
 $I_m$ denotes the Riesz potential operator of order $m$ in $\rn$. The constants in the relation ``$\lesssim$''   in this proof depend only on $m,n,d,\Omega$ and $\mu$.
By inequality \eqref{riesz} and~\cite[Theorem~1.2]{KK},
\begin{equation}\label{nov1}
\|u\|_{L\sp{\frac{n-d}{m}}(\Rn, \nu)}
\lesssim
\|I_m(|\nabla\sp m u|)|\|_{L\sp{\frac{n-d}{m}}(\Rn, \nu)} \lesssim \|\nabla\sp m u\|_{L\sp{\frac{n-d}{m},1}(\Rn)}
\end{equation}
for every  $u \in C^\infty _0(\rn)$.
Since the space $C^\infty _0(\rn)$ is dense in $W\sp{m}L\sp{\frac{n-d}{m},1}(\Rn)$, an extension and approximation argument as in the proof of Theorem \ref{P:endpoint},  part (i), tells us that a trace operator $T_\mu$ is well defined, and that inequality \eqref{E:endpointsub_estimate0} holds for every $u \in W^m_0L\sp{\frac{n-d}{m},1}(\Omega)$.
\\ Consider next part (ii).
As recalled above, the assumption that $\Omega$ be a bounded domain with the cone property ensures that it can be decomposed into
into a~finite union of bounded Lipschitz -- and hence extension -- domains $\{\Omega _j\}$, with $j=1, 	\dots , J$.  By \cite[Theorem 4.1]{CR},  there exist balls $B_j \supset \overline{\Omega _j}$ and bounded extension operators  $\mathcal E_j : W\sp{m}L\sp{\frac{n-d}{m},1}(\Omega_j)\to W\sp{m}_0L\sp{\frac{n-d}{m},1}(B_j)$.
Hence, owing to part (i),
if $u \in W\sp{m}L\sp{\frac{n-d}{m},1}(\Omega) \cap C^\infty (\Omega)$, then
\begin{align}\label{nov3}
\|u\|_{{L\sp{\frac{n-d}{m}}}(\Omega,{\mu})}
\leq
\sum_{j=1}\sp{J}
\|u\|_{{L\sp{\frac{n-d}{m}}}(\Omega_j,{\mu})}
&=
\sum_{j=1}\sp{J}
\|T_{\nu}(\mathcal E _j u)\|_{L\sp{\frac{n-d}{m}}(\Omega_j,\nu)}
\leq
\sum_{j=1}\sp{J}
\|T_{\nu}(\mathcal E _j u)\|_{L\sp{\frac{n-d}{m}}(B_j,\nu)} \\ \nonumber
&\lesssim
\sum_{j=1}\sp{J}
\|\nabla\sp m \mathcal E _j u\|_{L\sp{\frac{n-d}{m},1}(B_j)}
\lesssim
\|u\|_{W\sp mL\sp{\frac{n-d}{m},1}(\Omega)}.
\end{align}
Any function in $W\sp mL\sp{\frac{n-d}{m},1}(\Omega)$ can be approximated in norm and a.e. in $\Omega$ by  a sequence of functions in $
W\sp{m}L\sp{\frac{n-d}{m},1}(\Omega) \cap C^\infty (\Omega)$, as shown by an adaptation of the classical result for standard Sobolev spaces. Hence, inequality \eqref{E:endpointsub_estimate} follows.
\\ As far as part (iii) is concerned, one can again make use of an extension operator  in order to exploit  part (i), and argue along the same lines as in the proof  of Theorem \ref{P:endpoint}, part (iii). Here, an argument that justifies the existence of a trace operator  makes use of approximation  of functions in $W\sp{m}L\sp{\frac{n-d}{m},1}(\Omega)$ by  the restriction to $\Omega$ of sequences of functions from $C^\infty _0(\rn)$.  Such an approximation is possible since $\Omega$ is a bounded Lipschitz domain, the proof being analogous to that for standard Sobolev spaces as in \cite[Theorem~1.1.6/2]{Mabook}.
\end{proof}

In the proof of Proposition~\ref{sharpness} we shall make use of  Lemma \ref{L:operator} below. Given $n\in \N$ and $d\in(0,n)$, let us set
\begin{equation}\label{E:C}
  \Lambda_{n,d}=\{f\in\M_+(0,1): \text{$f$ is non-increasing and $\|f\|_{L^{n-d}(0,1)}\leq 1$}\}.
\end{equation}

\begin{lemma}\label{L:operator}
Let $n$, $m$, $d\in \N$, with $n\geq 2$ and $d<n-m$. Assume that $R \in (0,1)$. Let $H$ be the operator which maps any function    $f\in \Lambda_{n,d}$ into the function  defined as
$$
Hf(t)= \begin{cases} \displaystyle \chi_{(0,R)}(t)\int_t^R \int_{t_1}^R \dots \int_{t_{m-1}}^R \frac{f(t_m)}{t_m^m}\,dt_m \dots dt_1 &  \quad \text{if $m\geq 2$,}
\\  \\  \displaystyle
\chi_{(0,R)}(t) \int_t^R\frac{f(t_1)}{t_1}\,dt_1 & \quad \text{if $m=1$,}
\end{cases}
$$
for  $t\in (0,1)$.
\\
\textup{(}i\textup{)}  The function $Hf$ is $m$-times weakly differentiable in $(0,1)$,
and there exists a constant $C=C(n)$ such that
\begin{equation}\label{E:derivative_estimate}
\big|(Hf)^{(i)}(t)\big|\leq C t^{-i-\frac{1}{n-d}} \quad \text{for a.e.  $t\in (0,1)$},
\end{equation}
for every $f\in \Lambda_{n,d}$, and
 $i=0,1,\dots,m$. Here,  $(Hf)^{(i)}$ stands for the $i$-th-order derivative of $Hf$, and $H^{(0)}f=Hf$.
\\
\textup{(}ii\textup{)} One has that
\begin{equation}\label{gen15}
\sup_{f\in \Lambda_{n,d}} \|Hf\|_{L^{n-d}(0,1)}<\infty.
\end{equation}
\\
\textup{(}iii\textup{)} If $\|\cdot\|_{X(0,1)}$ is a rearrangement-invariant function norm such that
\begin{equation}\label{gen17}
\sup_{f\in \Lambda_{n,d}} \|Hf\|_{X(0,1)}<\infty,
\end{equation}
then
\begin{equation}\label{gen16}
L^{n-d}(0,1) \to X(0,1).
\end{equation}
\end{lemma}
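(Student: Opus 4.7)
The plan is to first derive a single-integral closed form for $Hf$ and then use it systematically across all three parts. A Fubini-type rearrangement of the $m$-fold iterated integral yields
$$
Hf(t)=\frac{1}{(m-1)!}\int_t^R (s-t)^{m-1}\frac{f(s)}{s^m}\,ds,\qquad t\in(0,R),
$$
with $Hf\equiv 0$ on $(R,1)$. For part (i), differentiating under the integral gives $(Hf)^{(i)}(t)=\frac{(-1)^i}{(m-1-i)!}\int_t^R (s-t)^{m-1-i}\frac{f(s)}{s^m}\,ds$ for $0\le i\le m-1$ and $(Hf)^{(m)}(t)=(-1)^m f(t)/t^m$, so $Hf$ is $m$-times weakly differentiable on $(0,1)$. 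Since every $f\in\Lambda_{n,d}$ is non-increasing with unit $L^{n-d}$-norm, the elementary bound $f(s)\le s^{-1/(n-d)}$ holds; combining it with $(s-t)^{m-1-i}\le s^{m-1-i}$ reduces each pointwise estimate to a one-variable integral of $s^{-i-1-1/(n-d)}$, which is dominated by a multiple of $t^{-i-1/(n-d)}$, proving \eqref{E:derivative_estimate}.

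For part (ii), the closed form together with $(s-t)^{m-1}\le s^{m-1}$ gives $Hf(t)\le \frac{1}{(m-1)!}\int_t^R \frac{f(s)}{s}\,ds$. To estimate the $L^{n-d}$-norm of the right-hand side, the plan is to dualise against nonnegative $g$ in the unit ball of $L^{(n-d)'}(0,1)$, swap the order of integration, and recognise the resulting pairing as $\int_0^R f(s)\,g^{**}(s)\,ds$. Since $n-d\ge m+1\ge 2$ forces $(n-d)'>1$, the classical Hardy inequality $\|g^{**}\|_{L^{(n-d)'}(0,1)}\le (n-d)\|g\|_{L^{(n-d)'}(0,1)}$ applies, and H\"older's inequality then yields the uniform bound $\|Hf\|_{L^{n-d}(0,1)}\le \frac{n-d}{(m-1)!}$, settling \eqref{gen15}.

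Part (iii) is the crux. The plan is to produce a pointwise lower bound $H\phi(t)\gtrsim \phi(4t)$ for every non-increasing $\phi$ with $\|\phi\|_{L^{n-d}(0,1)}\le 1$ (so $\phi\in\Lambda_{n,d}$) and then transfer the hypothesised control on $\|H\phi\|_{X(0,1)}$ into a bound on $\|\phi\|_{X(0,1)}$ by exploiting the boundedness of the dilation operator. Restricting the integral defining $H\phi(t)$ to $s\in(2t,4t)$, using $(s-t)^{m-1}/s^m\ge 4^{-(m-1)}/s$ in this range, and invoking the monotonicity bound $\phi(s)\ge\phi(4t)$ for $s\le 4t$, I obtain $H\phi(t)\ge c_m\phi(4t)$ for $t\in(0,R/4)$, with $c_m=\log 2/((m-1)!\,4^{m-1})$. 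Setting $g(t)=\phi(4t)\chi_{(0,R/4)}(t)$, a direct substitution shows $(E_4 g)(t)=\phi(t)\chi_{(0,R)}(t)$, so the bound on the dilation operator $E_4$ on $X(0,1)$ (with norm at most $4$) promotes $\|g\|_{X(0,1)}\le c_m^{-1}\sup_{f\in\Lambda_{n,d}}\|Hf\|_{X(0,1)}$ to an estimate for $\|\phi\chi_{(0,R)}\|_{X(0,1)}$. The tail on $(R,1)$ is absorbed through $\phi\le\phi(R)\le R^{-1/(n-d)}$ together with property (P4), giving $\|\phi\|_{X(0,1)}\le C'$ uniformly over $\Lambda_{n,d}$; homogeneity and rearrangement invariance of $X(0,1)$, applied to $\phi^*/\|\phi\|_{L^{n-d}(0,1)}$, upgrade this to $\|\phi\|_{X(0,1)}\le C'\|\phi\|_{L^{n-d}(0,1)}$ for every $\phi\in L^{n-d}(0,1)$, which is exactly \eqref{gen16}. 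The main obstacle I expect is this final transfer step: the asymmetry between $H\phi$ "seeing" $\phi$ only at the dilated scale $4t$ makes it essential that $E_4$ is bounded on every rearrangement-invariant space, and the bookkeeping between the three regions $(0,R/4)$, $(R/4,R)$ and $(R,1)$ has to be carried out carefully.
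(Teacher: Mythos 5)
Your proposal is correct, and while parts~(i) and~(ii) follow essentially the same route as the paper (the paper keeps the iterated-integral form of $(Hf)^{(i)}$ and estimates it directly, whereas you pass to the Fubini closed form first, but both rest on $f(t)\le t^{-1/(n-d)}$ and the Hardy/H\"older pairing with $g^{**}$), part~(iii) is a genuinely different argument. The paper proceeds by duality: from the lower bound $Hf(t)\gtrsim\chi_{(0,R/2)}(t)\int_{2t}^R f(s)s^{-1}\,ds$ it passes to $\|\cdot\|_{(X')'}$, swaps the order of integration by Fubini, identifies the resulting pairing with $\|\chi_{(0,R)}(s)\,g^{**}(s/2)\|_{L^{(n-d)'}(0,1)}$, and then invokes the equivalence $\|g^{**}\|_{L^{(n-d)'}}\approx\|g\|_{L^{(n-d)'}}$ and the monotonicity of $g^{**}$ to conclude $X'(0,1)\to L^{(n-d)'}(0,1)$. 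Your argument instead establishes the pointwise lower bound $Hf(t)\ge c_m f(4t)$ for $t\in(0,R/4)$ (by restricting the defining integral to $s\in(2t,4t)$ and using monotonicity of $f$), and then exploits the boundedness of the dilation operator $E_4$ to recover a uniform bound $\|f\chi_{(0,R)}\|_{X(0,1)}\lesssim 1$ over $\Lambda_{n,d}$; the tail on $(R,1)$ is controlled by $f(R)\le R^{-1/(n-d)}$ and property~(P4), and the final scaling/rearrangement-invariance step turns the uniform bound into $L^{n-d}(0,1)\to X(0,1)$. This is more elementary in that it bypasses the $(X')'$-duality and the $g^{**}$-equivalence entirely, and the key observation that $H$ dominates a dilation of the identity on non-increasing inputs is clean and transparent; the paper's version, on the other hand, packages the conclusion directly as an associate-space embedding $X'\to L^{(n-d)'}$, which is perhaps more naturally aligned with the rest of the paper's machinery. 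Both reach the same statement with the same ingredients (boundedness of the dilation operator, the elementary bound $f(t)\le t^{-1/(n-d)}$, Hardy), just organized differently.
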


\begin{proof} Consider part \textup{(i)}.
One has that
$$
(Hf)^{(i)}(t)=(-1)\sp{i}\int_t^R \int_{t_{i+1}}^R \dots \int_{t_{m-1}}^R \frac{f(t_m)}{t_m^m}\,dt_m \dots dt_{i+1} \quad \quad \text{for $i=1,\dots,m-2$},
$$
$$
(Hf)^{(m-1)}(t)=(-1)\sp{m-1}\int_t^R \frac{f(t_m)}{t_m^m}\,dt_m 
\quad \hbox{
and} \quad
(Hf)^{(m)}(t)=(-1)\sp{m}\frac{f(t)}{t^m}
$$
for a.e. $t\in (0,R)$, and $(Hf)^{(i)}$ vanishes elsewhere.
Inequality~\eqref{E:derivative_estimate}  hence follows,  since
\begin{equation*}
1\geq \int_0^t f^{n-d}(s)\,ds \geq tf^{n-d}(t) \quad\text{for $t\in (0,1)$}.
\end{equation*}
\par\noindent
As far as part \textup{(ii)} is concerned,  by  Fubini's theorem, if $f\in \Lambda_{n,d}$, then
\begin{equation}\label{E:fubini}
Hf(t)=\frac{\chi_{(0,R)}(t)}{(m-1)!} \int_t^R \frac{f(s)}{s^m}(s-t)^{m-1}\,ds
\leq \frac{\chi_{(0,R)}(t)}{(m-1)!} \int_t^R \frac{f(s)}{s}\,ds \quad\text{for $t\in (0,1)$}.
\end{equation}
Thus,
\begin{align*}
\sup_{f\in \Lambda_{n,d}} \|Hf\|_{L^{n-d}(0,1)}
&\lesssim \sup_{f\in \Lambda_{n,d}} \left\|\chi_{(0,R)}(t)\int_t^R \frac{f(s)}{s}\,ds\right\|_{L^{n-d}(0,1)}\\
& = \sup_{f\in \Lambda_{n,d}}\,\, \sup_{\|g\|_{L^{\frac{n-d}{n-d-1}}(0,1)}\leq 1} \int_0^R g(t) \int_t^R \frac{f(s)}{s}\,ds\,dt\\
&= \sup_{f\in \Lambda_{n,d}}\, \sup_{\|g\|_{L^{\frac{n-d}{n-d-1}}(0,1)}\leq 1} \int_0^R \frac{f(s)}{s} \int_0^s g(t)\,dt\,ds\\
&\lesssim \,\, \sup_{\|f\|_{L^{n-d}(0,1)}\leq 1} \,\sup_{\|g\|_{L^{\frac{n-d}{n-d-1}}(0,1)}\leq 1} \int_0^R f^*(s) g^{**}(s)\,ds\\
&\lesssim  \,\, \sup_{\|f\|_{L^{n-d}(0,1)}\leq 1} \,\sup_{\|g\|_{L^{\frac{n-d}{n-d-1}}(0,1)}\leq 1} \int_0^1 f^*(s) g^{**}(s)\,ds\\
&\lesssim \,\, \sup_{\|g\|_{L^{\frac{n-d}{n-d-1}}(0,1)}\leq 1} \|g^{**}\|_{L^{\frac{n-d}{n-d-1}}(0,1)}
\lesssim \,\, \sup_{\|g\|_{L^{\frac{n-d}{n-d-1}}(0,1)}\leq 1} \|g^{*}\|_{L^{\frac{n-d}{n-d-1}}(0,1)} =1.
\end{align*}
In the present proof, the constants in the relations $\lq\lq \lesssim "$ and $\lq\lq \approx " $   depend only on $m,n,d$ and $R$. Note that the last inequality holds owing to property \eqref{iv}. Equation \eqref{gen15} is thus established.
\par\noindent
Finally, we deal with part \textup{(iii)}. Let $f\in \Lambda_{n,d}$. Then
\begin{align*}
Hf(t)&=\frac{\chi_{(0,R)}(t)}{(m-1)!} \int_t^R \frac{f(s)}{s^m}(s-t)^{m-1}\,ds
\geq  \frac{\chi_{(0,\frac{R}{2})}(t)}{(m-1)!} \int_{2t}^R \frac{f(s)}{s^m}(s-t)^{m-1}\,ds\\
&\geq \frac{\chi_{(0,\frac{R}{2})}(t)}{2^{m-1}(m-1)!} \int_{2t}^R \frac{f(s)}{s}\,ds \quad\text{for $t\in (0,1)$}.
\end{align*}
Therefore, assumption \eqref{gen17} implies  that
\begin{equation}\label{E:sup_finite}
\sup_{f\in \Lambda_{n,d}} \left\|\chi_{(0,\frac{R}{2})}(t) \int_{2t}^R \frac{f(s)}{s}\,ds\right\|_{X(0,1)} <\infty.
\end{equation}
By property \eqref{X''}, Fubini's theorem, property \eqref{iv}, and the boundedness of the dilation operator on rearrangement-invariant spaces,
\begin{align}\label{E:associate}
\sup_{f\in \Lambda_{n,d}}
    &\left\|\chi_{(0,\frac{R}{2})}(t) \int_{2t}^R \frac{f(s)}{s}\,ds\right\|_{X(0,1)}
        =\sup_{f\in \Lambda_{n,d}} \sup_{\|g\|_{X'(0,1)}\leq 1}
        \int_0^{\frac{R}{2}} g^*(t) \int_{2t}^R \frac{f(s)}{s}\,ds\,dt
            \\ \nonumber
    &=\sup_{\|f\|_{L^{n-d}(0,1)}\leq 1} \sup_{\|g\|_{X'(0,1)}\leq 1}
        \int_0^R \frac{f^*(s)}{s} \int_0^{\frac{s}{2}} g^*(t)\,dt\,ds
            =\frac{1}{2} \sup_{ \|g\|_{X'(0,1)}\leq 1}
        \left\|\chi_{(0,R)}(s) g^{**}(s/2)\right\|_{L^{\frac{n-d}{n-d-1}}(0,1)}
        \\ \nonumber
    & \approx \sup_{\|g\|_{X'(0,1)}\leq 1}
        \|\chi_{(0,\frac{R}{2})} g^{**}\|_{L^{\frac{n-d}{n-d-1}}(0,1)}\\
    &\geq \frac{R}{2} \sup_{\|g\|_{X'(0,1)}\leq 1}
        \|g^{**}\|_{L^{\frac{n-d}{n-d-1}}(0,1)}
             \approx \nonumber \sup_{\|g\|_{X'(0,1)}\leq 1} \|g\|_{L^{\frac{n-d}{n-d-1}}(0,1)}.
\end{align}
Notice that the last but one inequality holds by the monotonicity of the function $g^{**}$.
 Combining equations~\eqref{E:sup_finite} and~\eqref{E:associate} tells us that  $X'(0,1) \to L^{\frac{n-d}{n-d-1}}(0,1)$, or, equivalently, that \eqref{gen16} holds.
\end{proof}

\begin{proof}[Proof of Proposition~\ref{sharpness}]
Assume, without loss of generality, that $\Omega_d=\Omega \cap \{(y,0): y \in \mathbb R^d, 0 \in \mathbb R^{n-d}\}$ and that  $0\in  \Omega$.
Denote  by    $\omega_d$ the Lebesgue measure of the $d$-dimensional unit ball in $\R^d$, and set $\gamma=\omega_d/\mu_2(\Omega)$.  Let $\Lambda_{n,d}$ be the set defined by \eqref{E:C}, and let $H$ be the operator introduced in Lemma~\ref{L:operator}.  Fix any $\kappa\in(n,\infty)$. Let $R\in (0,1)$,  $\alpha\in(0,\frac{1}{2m(n-d)})$ and   $f\in \Lambda_{n,d}$.  Define the set $E \subset \Rn$ as
$$
E=\left\{(y,z)\in \mathbb R^d \times \mathbb R^{n-d}: 0<\gamma|y|^d<R, |z|^{n-d}<|y|^{\kappa} Hf(\gamma |y|^d)^{n-d}\right\}.
$$
By Lemma~\ref{L:operator}, part \textup{(i)}, applied with $i=0$, we deduce that
\begin{equation}
    \label{gen64}
|z|^{n-d} <|y|^{\kappa} Hf(\gamma |y|^d)^{n-d} \lesssim (\gamma |y|^{d})^{\frac{\kappa}{d}-1}<R^{\frac{\kappa}{d}-1}\quad \hbox{for   $(y,z)\in E$.}
\end{equation}
Hence, if  $R$ is chosen small enough, then $\overline{E} \subseteq \Omega$ for every $f \in \Lambda_{n,d}$. Throughout this proof, constants either  explicitly appearing, or involved in the relations ``$\lesssim$'' and ``$\approx$'', depend only on $m,n,d,\Omega,\mu, \kappa$ and $R$.  In particular, all constants in this proof are independent of $f$ and  $\alpha$. In equation \eqref{gen64}  the involved constants  are, however, also independent of $R$.
%
Define the function $u: \rn \to [0, \infty)$ as
\begin{equation*}
u(y,z)= Hf(\gamma |y|^d)^{m(1-\alpha(n-d))} \big[Hf(\gamma |y|^d)^{\alpha(n-d)} |y|^{\kappa \alpha}-|z|^{(n-d)\alpha}\big]^m\chi_{E}(y,z) \quad \textup{for $(y,z)\in \mathbb R^d \times \mathbb R^{n-d}$.}
\end{equation*}
One can verify that the support of $u$ is contained in $\Omega$ and that $u$ is $m$-times weakly differentiable in $\Omega$.
To justify this assertion, one can in particular exploit the fact that   the function $u$, as well as all the terms arising by differentiating $u$ up to the order $m-1$, contain the factors $Hf(\gamma |y|^d)$ and $[Hf(\gamma |y|^d)^{\alpha(n-d)} |y|^{\kappa \alpha}-|z|^{(n-d)\alpha}]$ raised to positive powers, and that $Hf(\gamma |y|^d) \rightarrow 0$ whenever $\gamma |y|^d \rightarrow R$. Also, the verification that $u$ is actually $m$-times weakly differentiable across the singular set of $u$ can be accomplished by making use of the very definition of weak derivative, removing from $\Omega$ a neighbourhood  $U_r$ of $\mathbb R^d \times \{0\}$ with radius $r$, and showing that the integrals on $\partial U_r$ appearing after making use of the divergence theorem over $\Omega \setminus U_r$ approach $0$ as $r\to 0^+$.
\\
Let us define $B=\{y\in \mathbb R^d:~\gamma |y|^d<R\}$ and let $T_{\mu_2}$ denote the trace operator with respect to the measure $\mu_2$  defined as in \eqref{mu2}. Then,
\begin{equation}\label{E:trace_u}
T_{\mu_2} u(y,z)=Hf(\gamma |y|^d)^{m} |y|^{\kappa \alpha m} \chi_{B}(y) \quad\text{for $(y,z)\in \Omega_d$}.
\end{equation}
\\ We shall show that
\begin{equation}\label{gen52}
\|\nabla^m u\|_{L^{\frac{n-d}{m},1}(\Omega)} \leq C
\end{equation}
for some constant $C$. To begin with,
observe that
\begin{align*}
u(y,z)= Hf(\gamma |y|^d)^{m(1-\alpha(n-d))} \sum_{\ell=0}^{m} \binom{m}{\ell} (-1)^{m-\ell} Hf(\gamma |y|^d)^{\alpha \ell(n-d)} |y|^{\kappa l \alpha} |z|^{(n-d)(m-\ell)\alpha} \chi_{E}(y,z)
\end{align*}
 for $(y,z) \in \o$.
Next, for
each $\ell = 0,1,\dots, m$ define the functions
\begin{align*}
& S_\ell : \mathbb R^d \to [0, \infty) \quad \quad \quad  \hbox{as} \quad S_\ell(y)=Hf(\gamma |y|^d)^{m-\alpha(n-d)(m-\ell)} |y|^{\kappa \ell \alpha} \chi_{B}(y) \quad \quad\hbox{for $y\in \mathbb R^d$,}\\
&    T_\ell : \mathbb R^{n-d} \to [0, \infty) \quad \quad \quad \hbox{as} \quad T_\ell(z)=|z|^{(n-d)(m-\ell)\alpha} \quad\quad  \hbox{for $z\in \mathbb R^{n-d}$,}\\
&    F_\ell : \mathbb R^{n} \to [0, \infty) \quad \quad \quad \hbox{as} \quad F_\ell(y,z)=S_\ell(y) T_\ell(z)  \quad \quad\hbox{for $(y,z)\in \mathbb R^d \times \mathbb R^{n-d}$.}
\end{align*}
On denoting  by $\nabla_y^{j}$ the   $j$-th order gradient operator   with respect to the variables $y$, for $j=0,\dots, m$, and using the notation $\nabla_z^{j}$ with an analogous meaning,  we have that
\begin{align}\label{E:F}
|\nabla^m u|(y,z) \lesssim \sum_{\ell=0}^m |\nabla^m F_\ell|(y,z) \chi_E (y,z)
\lesssim \sum_{\ell=0}^m \sum_{j=0}^m |\nabla^j_y S_\ell|(y) |\nabla_z^{m-j} T_\ell|(z) \chi_E(y,z) \quad \hbox{for a.e. $(y,z) \in \o$.}
\end{align}
If $j\leq m$ and $\ell<m$, then
\begin{equation}\label{E:T}
|\nabla_z^{m-j} T_\ell|(z) \lesssim \alpha |z|^{(n-d)(m-\ell)\alpha-m+j} \quad\text{for a.e. $z\in \mathbb R^{n-d}$}.
\end{equation}
\\
In order to estimate $|\nabla^j_y S_\ell|$ for each $j=0,1,\dots,m$ and $\ell =0,1,\dots,m$, we set
\begin{align*}
I=\{(a_0, \dots,a_j,\beta):~&a_0\in \R, a_1,\dots,a_j\in \mathbb N_0, \, a_1 + 2 a_2  + \dots + j a_j\leq j,\\
&a_0+a_1+\dots +a_j=m-\alpha (m-\ell)(n-d),\, \beta=\kappa \ell \alpha + d(a_1+2a_2+\dots+j a_j)-j \},
\end{align*}
and observe that
\begin{multline}\label{E:S}
|\nabla^j_y S_\ell|(y) \\ \lesssim \chi_{\{y\in \mathbb R^d:~\gamma |y|^d<|{\rm supp} f| \}}(y)\sum_{(a_0,\dots,a_j,\beta)\in I} |y|^\beta Hf(\gamma |y|^d)^{a_0} (Hf)^{(1)}(\gamma |y|^d)^{a_1} \dots (Hf)^{(j)}(\gamma |y|^d)^{a_j}\\
\lesssim \chi_{\{y \in \mathbb R^d:~\gamma |y|^d<|{\rm supp} f|\}}(y) |y|^{\kappa \ell\alpha -\frac{nj}{n-d}} Hf(\gamma |y|^d)^{m-\alpha (m-\ell)(n-d)-j} \quad\text{for a.e.  $y\in \mathbb R^{n-d}$},
\end{multline}
where the first inequality can be proved by induction with respect to $j$ and the second inequality holds thanks to Lemma~\ref{L:operator}, part \textup{(i)}. The very definition of the set $E$ ensures that
\begin{equation}\label{E:definition_M}
\left(\frac{|z|}{Hf(\gamma |y|^d)}\right)^{n-d}\leq |y|^{\kappa}\leq \left(\frac{R}{\gamma}\right)^{\frac{\kappa}{d}} \quad\text{for $(y,z)\in E$.}
\end{equation}
From equations ~\eqref{E:F},~\eqref{E:T},~\eqref{E:S} and~\eqref{E:definition_M} one can deduce that
\begin{align}\label{E:gradient}
|\nabla^m u|(y,z)
&\lesssim \alpha \sum_{\ell=0}^{m-1} \sum_{j=0}^{m-1} |y|^{\kappa \ell\alpha -\frac{nj}{n-d}} \left(\frac{Hf(\gamma |y|^d)}{|z|}\right)^{-(n-d)(m-\ell)\alpha+m-j}\chi_E(y,z)\\
\nonumber
& \quad +\sum_{\ell=0}^{m} |y|^{\kappa \ell \alpha - \frac{nm}{n-d}} \left(\frac{|z|}{Hf(\gamma |y|^d)}\right)^{(n-d)(m-\ell)\alpha} \chi_E(y,z)\\
\nonumber
&\lesssim \alpha \sum_{\ell=0}^{m-1} \sum_{j=0}^{m-1} |y|^{-\frac{nj}{n-d}} \left(\frac{Hf(\gamma |y|^d)}{|z|}\right)^{-(n-d)(m-\ell)\alpha+m-j}\chi_E(y,z)\\
\nonumber
&\quad +\sum_{\ell=0}^{m} |y|^{- \frac{nm}{n-d}} \left(\frac{|z|}{Hf(\gamma |y|^d)}\right)^{(n-d)(m-\ell)\alpha} \chi_E(y,z)\\
\nonumber
&\lesssim \alpha \sum_{\ell=0}^{m-1} \sum_{j=0}^{m-1} \left(\frac{Hf(\gamma |y|^d)}{|z|}\right)^{-(n-d)(m-\ell)\alpha+m-j(1-\frac{n}{\kappa})}\chi_E(y,z)\\
\nonumber
&\quad +\sum_{l=0}^{m} |y|^{- \frac{nm}{n-d}+\alpha \kappa (m-\ell)} \chi_E(y,z)\\
\nonumber
&\lesssim \alpha \sum_{\ell=0}^{m-1} \sum_{j=0}^{m-1} \left(\frac{Hf(\gamma |y|^d)}{|z|}\right)^{-(n-d)(m-\ell)\alpha+m-j(1-\frac{n}{\kappa})}\chi_E(y,z) +|y|^{- \frac{nm}{n-d}} \chi_E(y,z)
\end{align}
for a.e. $(y,z) \in \o$.
Notice that the terms with $\ell=m$ and $j<m$ are not included in the above estimate since $T_m(z)=1$ for $z \in \mathbb R^{n-d}$, and therefore $|\nabla^{m-j}_z T_m|(z)=0$ whenever $j<m$.
Given any
$j=0,1,\dots,m-1$ and $\ell=0,1,\dots,m-1$,  we have that
\begin{align*}
&\left|\left\{(y,z)\in \mathbb R^d \times \mathbb R^{n-d}: \alpha \left(\frac{Hf(\gamma |y|^d)}{|z|}\right)^{-(n-d)(m-\ell)\alpha+m-j(1-\frac{n}{\kappa})}\chi_E(y,z)>\varrho \right\}\right|\\
&=\left|\left\{(y,z)\in \mathbb R^d \times \mathbb R^{n-d}: |z|^{n-d} < \chi_{B}(y) \left(\frac{\alpha}{\varrho}\right)^{\frac{n-d}{-(n-d)(m-\ell)\alpha+m-j(1-\frac{n}{\kappa})}} Hf(\gamma |y|^d)^{n-d} \right\}\right|\\
&\lesssim \left(\frac{\alpha}{\varrho}\right)^{\frac{n-d}{-(n-d)(m-\ell)\alpha+m-j(1-\frac{n}{\kappa})}} \int_{B} Hf(\gamma |y|^d)^{n-d}\,dy\\
&\approx\left(\frac{\alpha}{\varrho}\right)^{\frac{n-d}{-(n-d)(m-\ell)\alpha+m-j(1-\frac{n}{\kappa})}} \int_0^R Hf(s)^{n-d}\,ds
\lesssim \left(\frac{\alpha}{\varrho}\right)^{\frac{n-d}{-(n-d)(m-\ell)\alpha+m-j(1-\frac{n}{\kappa})}} \quad \hbox{for $\varrho \in (0, \infty)$,}
\end{align*}
where the last inequality holds thanks to Lemma~\ref{L:operator}, part \textup{(ii)}. Hence,
\begin{align}\label{E:rearrangement1}
&\left(\alpha \left(\frac{Hf(\gamma |y|^d)}{|z|}\right)^{-(n-d)(m-\ell)\alpha+m-j(1-\frac{n}{\kappa})}\chi_E(y,z)\right)^*(t)\\
\nonumber
&\lesssim \alpha t^{\frac{(n-d)(m-\ell)\alpha-m+j(1-\frac{n}{\kappa})}{n-d}} \chi_{(0,|E|)}(t)
\lesssim \alpha t^{\alpha-\frac{m}{n-d}} \chi_{(0,|
E|)}(t) \quad\text{for $t\in(0,\infty)$}.
\end{align}
Furthermore, by Lemma~\ref{L:operator}, part \textup{(i)},
\begin{align*}
&\left|\left\{(y,z)\in E: |y|^{-\frac{nm}{n-d}}>\varrho\right\}\right|
\leq \left|\left\{(y,z)\in B \times \mathbb R^{n-d}: |y|^{-\frac{nm}{n-d}}>\varrho, |z|^{n-d}<Hf(\gamma |y|^d)^{n-d} |y|^\kappa \right\}\right|\\
&\lesssim \left|\left\{(y,z)\in \mathbb R^d \times \mathbb R^{n-d}: |y|^d<\varrho^{-\frac{(n-d)d}{nm}}, |z|^{n-d}\lesssim |y|^{\kappa-d} \right\}\right|\\
&\lesssim \int_{\{x\in \mathbb R^d: |x|^d<\varrho^{-\frac{(n-d)d}{nm}}\}} |y|^{\kappa-d}\,dy
\lesssim \int_0^{C\varrho^{-\frac{(n-d)d}{nm}}} s^{\frac{\kappa}{d}-1}\,ds
\lesssim \varrho^{-\frac{(n-d)\kappa}{nm}}\quad\text{for $\varrho\in(0,\infty)$,}
\end{align*}
for some constant $C$.
Hence,
\begin{equation}\label{E:rearrangement2}
\left(|y|^{-\frac{nm}{n-d}}\chi_E(y,z)\right)^*(t)\lesssim t^{-\frac{nm}{(n-d)\kappa}}\chi_{(0,|E|)}(t) \quad\text{for $t\in(0,\infty)$}.
\end{equation}
A combination of~\eqref{E:gradient},~\eqref{E:rearrangement1} and~\eqref{E:rearrangement2} thus yields
$$
|\nabla^m u|^*(t)\lesssim \left(\alpha t^{\alpha-\frac{m}{n-d}} + t^{-\frac{nm}{(n-d)\kappa}}\right) \chi_{(0,|\Omega|)}(t) \quad\text{for $t\in(0,\infty)$,}
$$
whence equation \eqref{gen52} follows.
\\ Since $u$ is compactly supported   in $\Omega$, we infer from \eqref{gen52}, via a general Poincar\'e inequality (see, e.g.,~\cite[Lemma 4.2]{CP1998}),   that $u \in W^m_0L^{\frac{n-d}{m},1}(\Omega)$, and
\begin{equation}\label{E:boundedness}
\|u\|_{W^mL^{\frac{n-d}{m},1}(\Omega)}\leq C
\end{equation}
for some constant $C$.
\\
Now, assume that $\|\cdot \|_{Z(0,1)}$ is a rearrangement-invariant  function norm such that
$$
W^mL^{\frac{n-d}{m},1}(\Omega) \rightarrow Z(\Omega, \mu_2).
$$
Thanks to~\eqref{E:trace_u} and~\eqref{E:boundedness},
$$
\left\|Hf(\gamma |y|^d)^{m} |y|^{\kappa \alpha m} \chi_{B}(y) \chi_{\Omega_d}(y,z)\right\|_{Z(\Omega, \mu_2)} \leq C
$$
for some constant $C$. Hence, passing to limit as $\alpha \to 0^+$ and making use of    property (P3) of the definition of rearrangement-invariant function norm imply that
$$
\left\|Hf(\gamma |y|^d)^{m} \chi_{B}(y)\chi_{\Omega_d}(y,z)\right\|_{Z(\Omega, \mu_2)} \leq C.
$$
Since
\begin{align*}
\left\|Hf(\gamma |y|^d)^{m} \chi_{B}(y)\chi_{\Omega_d}(y,z)\right\|_{Z(\Omega, \mu_2)}
=\left\|Hf(t)^{m} \chi_{(0,R)}(t)\right\|_{{Z}(0, 1)} =\left(\left\|Hf(t) \chi_{(0,R)}(t)\right\|_{{Z}^{\{m\}}(0, 1)} \right)^m,
\end{align*}
the assumption of Lemma~\ref{L:operator}, part \textup{(iii)}, is fulfilled with $\|\cdot\|_{X(0,1)}$ replaced by $\|\cdot\|_{{Z}^{\{m\}}(0, 1)}$. Thus,   $L^{n-d}(0, 1) \to {Z}^{\{m\}}(0, 1)$. This embedding can be written as
$$
\|f^m\|_{{Z}(0, 1)}\lesssim \|f^m\|_{L^{\frac{n-d}{m}}(0, 1)}
$$
for $f\in \mathcal M_+(0, 1)$,
which, in its turn, implies  that $L^{\frac{n-d}{m}}(\Omega, \mu_2) \to Z(\Omega, \mu_2)$, thus proving the optimality of the  target norm  $\|\cdot \|_{L^{\frac{n-d}{m}}(\Omega, \mu_2)}$ in~\eqref{E:endpointsub_estimate} among all rearrangement-invariant norms.
\end{proof}

\section{Main results -- fast decaying measures}\label{high}

%

This section deals with $d$-Frostman measures $\mu$  for $d\in [n-m, n]$. A sharp embedding in this range of values of $d$, for the endpoint space $W^{m,1}(\Omega),$ has been established in Theorem \ref{P:endpoint}. This result, coupled with a classical embedding at an apposite endpoint, is exploited here to derive a reduction principle to one-dimensional Hardy-type inequalities for  Sobolev embeddings involving arbitrary rearrangement-invariant norms. Moreover, the Hardy inequalities in question are shown to be fully equivalent to the Sobolev embeddings if the power $d$ in the decay of the measure $\mu$ on balls is sharp at least   at one point. The reduction principle provides us with a key tool for the identification of the optimal rearrangement-invariant target space in the relevant Sobolev embeddings.


\begin{theorem}{\rm{\bf [Reduction principle: case $n-m \leq d \leq n$]}}\label{T:sufficiency}
Let $\Omega$ be an open set with finite Lebesgue measure in $\rn$, $n\geq 2$, let $m\in \N$, with $m<n$, and let $d \in [n-m, n]$. Let $\|\cdot\|_{X(0,1)}$ and $\|\cdot\|_{Y(0,1)}$ be rearrangement-invariant function norms. Assume that there exists a constant
$C_1$ such that
\begin{equation}\label{E:assumption}
\left\|\int_{t^{\frac{n}{d}}}^1 f(s) s^{-1+\frac{m}{n}}\,ds\right\|_{Y(0,1)} \leq C_1 \|f\|_{  X(0,1)}
\end{equation}
for every nonnegative non-increasing function $f\in  X(0,1)$.
\\
\textup{(}i\textup{)} Let $\mu$ be a finite  Borel measure on $\Omega$ fulfilling \eqref{E:sup}. Then
\begin{equation}\label{E:4.1-i}
\|u\|_{Y({\Omega},\mu)} \leq C_2\|\nabla ^m u\|_{X(\Omega)}
\end{equation}
for some constant $C_2$ and every $u\in W^{m}_0X(\Omega)$.
\\
\textup{(}ii\textup{)} Assume, in addition, that $\Omega$ is bounded  and has the cone property. Let  $\mu$ be a Borel measure on $\o$ fulfilling \eqref{E:sup}.  Then
\begin{equation}\label{E:4.1-ii}
\|u\|_{Y({\Omega},\mu)} \leq C_2 \|u\|_{W^mX(\Omega)}
\end{equation}
for some constant $C_2$ and every $u\in W^{m}X(\Omega)$.
\\
\textup{(}iii\textup{)} Assume that $\Omega$ is a bounded Lipschitz domain.  Let $\mu$ be a Borel measure on $\overline{\Omega}$ fulfilling \eqref{E:supbar}.  Then
\begin{equation}\label{E:4.1-iii}
\|u\|_{Y(\overline{\Omega},\mu)} \leq C_2 \|u\|_{W^mX(\Omega)}
\end{equation}
for some constant $C_2$ and every $u\in W^mX(\Omega)$.
\end{theorem}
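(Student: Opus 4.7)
The plan is to derive \eqref{E:4.1-i} by interpolation, using $K$-functional techniques, between the sharp endpoint estimate provided by Theorem~\ref{P:endpoint}(i), namely $T_\mu\colon W^{m,1}_0(\Omega)\to L^{d/(n-m),1}(\Omega,\mu)$, and the trivial estimate $T_\mu\colon W^{m,\infty}_0(\Omega)\to L^{\infty}(\Omega,\mu)$. The latter is immediate since, by \eqref{gen21} and $|\Omega|<\infty$, any function in $W^{m,\infty}_0(\Omega)$ is essentially bounded with $\|u\|_{L^\infty(\Omega)}$ controlled by $\|\nabla^m u\|_{L^\infty(\Omega)}$. With both endpoints in hand, the $K$-functional monotonicity \eqref{K} applied to the linear operator $T_\mu$, coupled with Holmstedt's formulas \eqref{E:fx2.13} (on the target side, with $p_0=d/(n-m)$ and $q_0=1$) and \eqref{gen41} (on the Sobolev side, with $p_0=q_0=1$), should yield, after rescaling so that $|\Omega|=\mu(\Omega)=1$, the integral inequality
\begin{equation*}
\int_0^{t^{d/(n-m)}} s^{(n-m)/d-1}(T_\mu u)^{*}_\mu(s)\,ds\;\lesssim\;\int_0^{C t}|D^m u|^{*}(s)\,ds,\qquad t\in(0,1).
\end{equation*}

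The heart of the argument is then to convert this integral inequality into a statement involving the Hardy-type transform appearing in \eqref{E:assumption}. The substitution $t=r^{(n-m)/d}$ rewrites the left-hand side as an integral with upper limit $r$ and integrand $s^{(n-m)/d-1}(T_\mu u)^{*}_\mu(s)$, while the substitution $\sigma=s^{d/n}$ in the right-hand integral introduces the weight $\sigma^{n/d-1}$, compatible with the kernel of the operator in \eqref{E:assumption}. Combining these substitutions with Hardy's lemma \eqref{E:hardy-lemma} and the Hardy--Littlewood--P\'olya principle \eqref{E:HLP}, the plan is to establish the norm estimate
\begin{equation*}
\|(T_\mu u)^{*}_\mu\|_{Y(0,1)}\;\lesssim\;\Bigl\|\int_{t^{n/d}}^1 |D^m u|^*(s)\, s^{-1+m/n}\,ds\Bigr\|_{Y(0,1)},
\end{equation*}
which, combined with the hypothesis \eqref{E:assumption} applied to $f=|D^m u|^*$ together with $\|f\|_{X(0,1)}=\|\nabla^m u\|_{X(\Omega)}$, yields \eqref{E:4.1-i}.

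Parts (ii) and (iii) will then follow from part (i) by essentially the same extension-and-restriction device used in parts (ii) and (iii) of the proof of Theorem~\ref{P:endpoint}. Under the cone property, one decomposes $\Omega$ into finitely many bounded Lipschitz subdomains and applies an extension operator into $W^m_0 X$ of a circumscribing ball on each piece, summing the resulting bounds; under the Lipschitz assumption in (iii), a single extension operator suffices, and one exploits that $\mu$ is defined on the whole of $\overline{\Omega}$.

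The main obstacle is the middle step, i.e.\ the passage from the $K$-functional integral inequality to the Hardy-controlled norm estimate. Since the former only provides a weighted integral comparison of $(T_\mu u)^*_\mu$ and $|D^m u|^*$, one cannot hope for a pointwise bound of $(T_\mu u)^*_\mu$ by the Hardy transform of $|D^m u|^*$ in full generality, so the reduction must proceed through \eqref{E:HLP} in a direction that remains compatible with an arbitrary rearrangement-invariant target norm $\|\cdot\|_{Y(0,1)}$. Because the exponent $(n-m)/d\in(0,1]$ can degenerate at $d=n-m$, the sequence of substitutions and the application of \eqref{E:HLP} must be performed with care, and ancillary tools such as \eqref{gen35} or \eqref{gen36} may need to be invoked to control the weights that emerge after the changes of variable.
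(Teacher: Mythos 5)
Your choice of the \emph{upper} interpolation endpoint is the weak link, and it sinks the argument. You interpolate $T_\mu$ between $W^{m,1}_0(\Omega)\to L^{d/(n-m),1}(\Omega,\mu)$ and $W^{m,\infty}_0(\Omega)\to L^\infty(\Omega,\mu)$, and correctly compute, via~\eqref{gen41} and~\eqref{E:fx2.13}, the resulting $K$-functional comparison
\begin{equation*}
\int_0^{t^{d/(n-m)}} s^{(n-m)/d-1}\,u^*_\mu(s)\,ds\ \lesssim\ \int_0^{Ct}|D^m u|^*(s)\,ds.
\end{equation*}
But because $W^{m,\infty}\subset W^m L^{n/m,1}$, the $K$-functional of $(W^{m,1},W^{m,\infty})$ dominates that of $(W^{m,1},W^m L^{n/m,1})$, so the bound above is \emph{strictly weaker} than the one the paper extracts, namely~\eqref{gen3}, whose right-hand side already contains the Hardy kernel $\int_s^\infty |D^m u|^*(r)\,r^{-1+m/n}\,dr$ coming from the Holmstedt formula~\eqref{E:fx2.14}. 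This loss is fatal: take $X=L^{n/m,1}$ and $Y=L^\infty$, a pair for which~\eqref{E:assumption} holds trivially. Setting $v(s)=s^{-\epsilon}$ with $\epsilon$ small and $h(s)=s^{-m/n}\log(e/s)^{-2}$, both non-increasing, one has $\|h\|_{L^{n/m,1}(0,1)}<\infty$ and, for $\epsilon<m(n-m)/(nd)$, the displayed integral comparison is satisfied with $v$ in place of $u^*_\mu$ and $h$ in place of $|D^m u|^*$, yet $\|v\|_{L^\infty}=\infty$. So no amount of substitutions, Hardy's lemma, or the Hardy--Littlewood--P\'olya principle can extract $\|u^*_\mu\|_{Y}\lesssim\bigl\|\int_{t^{n/d}}^1 |D^m u|^*(s)\,s^{-1+m/n}\,ds\bigr\|_{Y}$ from your $K$-functional inequality; that inequality is simply compatible with the failure of the conclusion.

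The paper takes the second endpoint to be the \emph{sub-limiting} embedding $W^m L^{n/m,1}(\Omega)\to L^\infty(\overline\Omega,\mu)$, established via \cite{Oneil,Peetre,Stein}, which is precisely what makes the Hardy transform of $|D^m u|^*$ appear inside the $K$-functional. Even after that fix, the passage from~\eqref{gen3} to the norm estimate is not accomplished by a direct HLP application as you outline. Because the two sides of~\eqref{gen3} have \emph{different} upper limits and weights, the paper first rewrites it into the form~\eqref{E:HLP-appl} (same weight $t^{-1+(n-m)/d}$, same upper limit $t$), then applies Hardy's lemma with the specific non-increasing test function $h(t)=\sup_{t<s<1}s^{1-(n-m)/d}f^*(s)$, introduces the Calder\'on-type supremum operator $S$ and the associate norm $\|\cdot\|_{X_\sigma(0,1)}$ of~\eqref{E:X-sigma}, and proves the boundedness~\eqref{E:boundedness_S} of $S$ on $X_\sigma$ using results from \cite{KP}. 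None of that apparatus appears in your sketch, and it is exactly what bridges the $K$-functional data to an arbitrary rearrangement-invariant target norm $Y$. Your plan for parts (ii) and (iii), by decomposition and extension, is sound in outline and matches the paper's structure, but the core step for part (i) (equivalently, the Lipschitz case (iii) from which the paper starts) needs both the corrected endpoint and the $S$/$X_\sigma$ machinery.
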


\begin{remark}\label{gen50} {\rm One can show that inequality \eqref{E:assumption} for  nonnegative and non-increasing functions  holds if and only if it  just holds for nonnegative functions -- see \cite[Corollary 9.8]{CPS}.}
\end{remark}

The necessity of condition \eqref{E:assumption}, under asumption \eqref{E:inf} or \eqref{E:infbar}, is the subject of the next result. Let us stress that such a condition is necessary   for  any $d \in (0,n]$, and not just for  $d \in [n-m, n]$.


\begin{theorem}{\rm{\bf [Necessity]}}\label{T:necessity}
Let $\Omega$ be an open set with finite Lebesgue measure    in $\Rn$, $n\geq 2$, let $m\in \N$  with $m<n$, and let $d\in (0,n]$. Let $\|\cdot\|_{X(0,1)}$ and $\|\cdot\|_{Y(0,1)}$ be rearrangement-invariant function  norms.
\\
\textup{(}i\textup{)} Assume that $\mu$ is a finite Borel measure on $\Omega$ fulfilling \eqref{E:sup} and \eqref{E:inf}. If either inequality \eqref{E:4.1-i} or inequality \eqref{E:4.1-ii} is satisfied for some constant $C_2$,  then there exists a constant $C_1$  such that inequality \eqref{E:assumption} holds.
\\
\textup{(}ii\textup{)} Assume that $\mu$ is a finite Borel measure on $\overline\Omega$ fulfilling \eqref{E:supbar} and \eqref{E:infbar}. If inequality \eqref{E:4.1-iii} is satisfied for some constant $C_2$, then there exists a constant $C_1$  such that inequality \eqref{E:assumption} holds.
\end{theorem}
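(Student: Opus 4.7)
Plan. I would derive \eqref{E:assumption} as a necessary condition by testing the assumed Sobolev inequality on a family of radial functions centered at the point $x_0$ where the lower Frostman bound holds, and extracting the one-dimensional Hardy inequality from the resulting two-sided estimates. After translation assume $x_0=0$; in parts (i), (ii), shrink $R$ so that $B_R\Subset\Omega$; in part (iii), where $x_0\in\overline\Omega$ may lie on $\partial\Omega$, the Lipschitz regularity of $\partial\Omega$ at $x_0$ provides a fixed-aperture truncated cone inside $\Omega$ with apex at $x_0$, and the construction below is performed within this cone with straightforward modifications.

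Given a nonnegative, non-increasing $f\in X(0,1)$, I would define $u(x)=v(|x|)$ by
\[
v(r)=\int_{r}^{R}\frac{(s-r)^{m-1}}{(m-1)!}\,f(\omega_n s^n)\,ds,\qquad r\in[0,R],
\]
extended by zero outside $B_R$. Since $v^{(k)}(R)=0$ for $k=0,\dots,m-1$, the function $u$ lies in $W^m_0X(\Omega)$, and by the Poincar\'e-type equivalence \eqref{gen21}, $\|u\|_{W^mX(\Omega)}\approx\|\nabla^m u\|_{X(\Omega)}$, so the same test function serves \eqref{E:4.1-ii} as well. For $|x|\le r_0\le R/2$, monotonicity of $v$ together with the substitution $t=\omega_n s^n$ yields
\[
u(x)\ge v(r_0)\ge c\int_{2r_0}^{R}s^{m-1}f(\omega_n s^n)\,ds =c'\int_{\omega_n(2r_0)^n}^{\omega_n R^n}f(s)\,s^{-1+m/n}\,ds.
\]
Combined with $\mu(B_{r_0})\ge c\,r_0^d$ from \eqref{E:inf} (resp.\ \eqref{E:infbar}), one obtains $u^*_\mu(c r_0^d)\ge c'\int_{c''r_0^n}^{c'''R^n}f(s)\,s^{-1+m/n}\,ds$. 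Setting $t=c r_0^d$ and rescaling $R$ harmlessly gives the lower bound $u^*_\mu(t)\gtrsim \int_{c_1 t^{n/d}}^{1}f(s)\,s^{-1+m/n}\,ds$.

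The matching upper bound $\|\nabla^m u\|_{X(\Omega)}\lesssim\|f\|_{X(0,1)}$ is the technical core. For radial $u=v(|x|)$, one has the standard pointwise estimate $|\nabla^m u|(x)\le C\sum_{k=1}^{m}|v^{(k)}(|x|)|/|x|^{m-k}$, and each summand is a radially decreasing function of $x$, so its decreasing rearrangement coincides with its value at $r=(t/\omega_n)^{1/n}$. An explicit calculation, after the change of variable $t=\omega_n s^n$ in the integrals defining $v^{(k)}$, shows that the top-order summand equals $f(t)$, while the lower-order summands reduce to fractional Hardy-type integrals of $f$. The main obstacle is to bound the $X(0,1)$-norm of their sum by $\|f\|_{X(0,1)}$; this rests on one-dimensional Hardy-type inequalities for non-increasing functions that are valid in every rearrangement-invariant norm, in the spirit of Remark~\ref{gen50} and of the arguments in \cite{CPS} and \cite{KP}.

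Granted this upper bound, the assumed Sobolev inequality gives
\[
\left\|\int_{c_1 t^{n/d}}^{1}f(s)\,s^{-1+m/n}\,ds\right\|_{Y(0,1)}\lesssim\|u\|_{Y(\Omega,\mu)}\le C_2\|u\|_{W^mX(\Omega)}\lesssim\|f\|_{X(0,1)},
\]
and the factor $c_1$ in the lower limit is absorbed using the boundedness of the dilation operator $E_{1/c_1}$ on the rearrangement-invariant space $Y(0,1)$. Remark~\ref{gen50} then extends the inequality to arbitrary nonnegative $f\in X(0,1)$, yielding \eqref{E:assumption} with some constant $C_1$ and completing all three parts simultaneously.
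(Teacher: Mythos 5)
Your overall strategy mirrors the paper's: test the Sobolev inequality on a radial function built from $f$ around the distinguished point, use the lower Frostman bound to convert $u^*_\mu$ into the Hardy integral, and close with an upper bound $\|\nabla^m u\|_{X(\Omega)}\lesssim\|f\|_{X(0,1)}$. The lower-bound half of your argument is correct and matches what the paper does. The gap is in the upper bound, and it stems from the choice of test function.

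You set $v(r)=\int_r^R\frac{(s-r)^{m-1}}{(m-1)!}f(\omega_n s^n)\,ds$, which forces $v^{(m)}(r)=\pm f(\omega_n r^n)$ but leaves $v'(0),\dots,v^{(m-1)}(0)$ generically nonzero. For a radial function $u(x)=v(|x|)$ with $m\geq 2$, this produces a genuine singularity in $\nabla^m u$ at the origin: already the tangential part of the Hessian is $\tfrac{v'(r)}{r}(\mathrm{Id}-\tfrac{x\otimes x}{r^2})$, so $v'(0)\neq 0$ forces $|\nabla^2 u|\sim |x|^{-1}$ near $0$, and analogously $|\nabla^m u|\sim |x|^{-(m-1)}$. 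Concretely, take $f\equiv 1$; then $v(r)=\tfrac{(R-r)^m}{m!}$, and $|\nabla^m u|^*(t)\sim t^{-(m-1)/n}$ near $0$, so $\|\nabla^m u\|_{L^\infty(\Omega)}=\infty$ while $\|f\|_{L^\infty(0,1)}=1$. The Hardy-type inequality you invoke — that the operators $t^{-\alpha}\int_{ct}^1 f(\sigma)\sigma^{\alpha-1}\,d\sigma$ arising from the lower-order summands, with $\alpha=(m-k)/n\in(0,1)$, are bounded on every rearrangement-invariant norm for non-increasing $f$ — is simply false: these operators are bounded on $L^1$ but not on $L^\infty$, even restricted to non-increasing $f$ (same example $f=\chi_{(0,1)}$). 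So the step $\|\nabla^m u\|_{X(\Omega)}\lesssim\|f\|_{X(0,1)}$ fails for strong norms $X$, and the argument does not close.

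The fix is to build the profile in the volume variable $t=\omega_n|x|^n$ rather than in $r$. The paper defines
\[
g(t)=\chi_{(0,\omega_n R^n)}(t)\int_t^{\omega_n R^n}\!\!\int_{r_1}^{\omega_n R^n}\!\!\!\dots\int_{r_{m-1}}^{\omega_n R^n}\! f(r_m)\,r_m^{-m+\frac{m}{n}}\,dr_m\cdots dr_1
\quad\text{and}\quad u(x)=g(\omega_n|x|^n),
\]
then cites \cite[inequality (4.20)]{CP-Trans} for $\|u\|_{W^mX(\Omega)}\lesssim\|f(\omega_n R^n t)\|_{X(0,1)}$. The crucial weight $r_m^{-m+m/n}$ makes the composed profile $v(r)=g(\omega_n r^n)$ satisfy $v^{(k)}(r)=O(r^{m-k})$ as $r\to 0^+$ (all Taylor coefficients below order $m$ vanish), so $|\nabla^m u|^*(t)\approx\sum_j t^{j-\frac{m}{n}}|g^{(j)}(t)|$, and the lower-order operators become $t^{-\alpha}\int_{ct}^1 f(\sigma)\sigma^{\alpha-1}\,d\sigma$ with $\alpha=\frac{m}{n}-j<0$ for $j\geq 1$; these are bounded on both $L^1$ and $L^\infty$, hence on every rearrangement-invariant space by Calder\'on's theorem. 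Your test function is pointwise comparable to the paper's (the kernels $(\sigma^{1/n}-t^{1/n})^{m-1}\sigma^{-1+1/n}$ and $(\sigma-t)^{m-1}\sigma^{-m+m/n}$ agree up to constants for $\sigma\geq t$), but the derivatives are not: the difference has a nonzero linear term in $r$, which carries the singularity. This is why the lower bound survives but the upper bound does not.
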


Theorems \ref{T:sufficiency} and \ref{T:necessity} enable us to exhibit the optimal  rearrangement-invariant target norms in  inequalities~\eqref{E:4.1-i}--\eqref{E:4.1-ii}, or~\eqref{E:4.1-iii},   for any measure $\mu$ fulfilling \eqref{E:sup} and \eqref{E:inf}, or \eqref{E:supbar} and \eqref{E:infbar}, with $d \in [n-m,n]$. The  optimal space is built upon the  function norm $\|\cdot \|_{X\sp m_{d,n}(0,1)}$  obeying
\begin{equation}\label{E:trace_opt_norm}
\|f\|_{(X\sp m_{d,n})'(0,1)}
  =\bigg\|t\sp{-1+\frac mn}\int _0^{t^{\frac d n}} f\sp{*}(s)ds\bigg\|_{X'(0,1)}
\end{equation}
for    $f\in \mathcal \Mpl(0,1)$.

\begin{theorem}{\rm{\bf [Optimal range space]}}\label{T:optimal} Let $\Omega$ be an open set with finite Lebesgue measure  in $\Rn$, $n\geq 2$, let $m\in \N$  with
$m<n$, and let $d \in [n-m, n]$. Let $\|\cdot\|_{X(0,1)}$ be a~rearrangement-invariant function  norm.
\\
\textup{(}i\textup{)} Assume that $\mu$ is a finite Borel measure on ${\Omega}$ fulfilling condition \eqref{E:sup}.
Then
\begin{equation}\label{E:optimal emb-i}
W^m_0X(\Omega) \to X\sp m_{d,n}(\Omega, \mu).
\end{equation}
\\
\textup{(}ii\textup{)} Assume, in addition, that $\Omega$ is bounded and has the cone property. Let
$\mu$ be a Borel measure on ${\Omega}$  fulfilling condition \eqref{E:sup}.
Then
\begin{equation}\label{E:optimal emb-ii}
W^mX(\Omega) \to X\sp m_{d,n}(\Omega, \mu).
\end{equation}
\\
\textup{(}iii\textup{)} Assume that $\Omega$ is a bounded Lipschitz domain.  Let
$\mu$ be a Borel measure on $\overline{\Omega}$ fulfilling condition \eqref{E:supbar}.
Then
\begin{equation}\label{E:optimal emb-iii}
W^mX(\Omega) \to X\sp m_{d,n}(\overline \Omega, \mu).
\end{equation}
Moreover,
 if, in addition, $\mu$ satisfies condition \eqref{E:inf} in cases \textup{(}i\textup{)} and \textup{(}ii\textup{)}, or \eqref{E:infbar} in case \textup{(}iii\textup{)}, then the target space is optimal,  in each case, among all rearrangement-invariant spaces in the pertaining embedding.
\end{theorem}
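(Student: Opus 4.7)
The statement combines two claims: the embedding into $X^m_{d,n}(\Omega,\mu)$ (or $X^m_{d,n}(\overline\Omega,\mu)$), and its optimality among rearrangement‑invariant targets. The plan is to reduce both claims to statements about the one‑dimensional Hardy operator $Tf(t)=\int_{t^{n/d}}^{1}f(s)s^{-1+m/n}\,ds$, using the sufficiency Theorem~\ref{T:sufficiency} for the embedding and the necessity Theorem~\ref{T:necessity} for optimality. A preliminary routine step is to verify that \eqref{E:trace_opt_norm} actually defines an r.i.\ function norm: positivity, homogeneity, (P6) and the Fatou property are immediate from the corresponding properties of $\|\cdot\|_{X'(0,1)}$ and from the fact that the formula depends on $f$ only through $f^{*}$, and the triangle inequality follows from the subadditivity \eqref{subadd} of $f\mapsto \int_{0}^{t}f^{*}$; the associate space $X^m_{d,n}(0,1)$ is then automatically an r.i.\ space by \eqref{X''}.

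\textbf{Sufficiency (the three embeddings).} I would check that $Y=X^m_{d,n}$ satisfies the hypothesis \eqref{E:assumption} of Theorem~\ref{T:sufficiency}. Given a non‑increasing $f\in X(0,1)$, set $g=Tf$; note $g$ is non‑increasing, so $g^{*}=g$. For any nonnegative $h$ with $\|h\|_{(X^m_{d,n})'(0,1)}\leq 1$, Hardy--Littlewood \eqref{E:HL} gives $\int_{0}^{1}g h\leq \int_{0}^{1}g h^{*}$, and Fubini rewrites this as
\[
\int_{0}^{1}f(s)\,s^{-1+m/n}\Bigl(\int_{0}^{s^{d/n}}h^{*}(t)\,dt\Bigr)ds.
\]
Hölder's inequality in the $X$--$X'$ pairing and the defining formula \eqref{E:trace_opt_norm} bound this by $\|f\|_{X(0,1)}\,\|h\|_{(X^m_{d,n})'(0,1)}\leq \|f\|_{X(0,1)}$; taking the supremum over $h$ produces $\|Tf\|_{X^m_{d,n}(0,1)}\leq\|f\|_{X(0,1)}$. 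Theorem~\ref{T:sufficiency} then yields \eqref{E:optimal emb-i}, \eqref{E:optimal emb-ii}, and \eqref{E:optimal emb-iii} in cases (i), (ii), (iii) respectively.

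\textbf{Optimality.} Suppose $Z(\Omega,\mu)$ (respectively $Z(\overline\Omega,\mu)$) is another r.i.\ target for which the embedding holds. Under the extra lower bound \eqref{E:inf} or \eqref{E:infbar}, Theorem~\ref{T:necessity} furnishes \eqref{E:assumption} with $Y=Z$, and Remark~\ref{gen50} promotes it to arbitrary nonnegative $f$. The adjoint of $T$ in the pairing \eqref{E:duo} is readily computed by Fubini to be $T'h(s)=s^{-1+m/n}\int_{0}^{s^{d/n}}h(t)\,dt$, so \eqref{E:novabis} turns the Hardy inequality for $Z$ into
\[
\|T'h\|_{X'(0,1)}\leq C\,\|h\|_{Z'(0,1)} \qquad\text{for every nonnegative } h.
\]
Applying this to $h^{*}$ and using that $\|h^{*}\|_{Z'(0,1)}=\|h\|_{Z'(0,1)}$, the left‑hand side becomes $\bigl\|s^{-1+m/n}\int_{0}^{s^{d/n}}h^{*}(t)\,dt\bigr\|_{X'(0,1)}$, which is exactly $\|h\|_{(X^m_{d,n})'(0,1)}$ by \eqref{E:trace_opt_norm}. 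Hence $Z'(0,1)\hookrightarrow (X^m_{d,n})'(0,1)$, and \eqref{emb} gives $X^m_{d,n}(0,1)\hookrightarrow Z(0,1)$, i.e.\ $X^m_{d,n}(\cdot,\mu)\hookrightarrow Z(\cdot,\mu)$, as required.

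\textbf{Anticipated obstacle.} The delicate point is the duality step in the optimality argument: Theorem~\ref{T:necessity} only delivers the Hardy inequality for non‑increasing $f$, while the identity \eqref{E:novabis} requires boundedness on arbitrary nonnegative inputs. Remark~\ref{gen50} is exactly what bridges this gap, and without it one would have to confront a restricted‑type estimate involving ``down'' norms. A subsidiary nuisance is that the one‑sided pointwise bound $T'h\leq T'h^{*}$ only yields the inequality in one direction; the trick of substituting $h^{*}$ at the end, which is legitimate because $Z'$ is rearrangement‑invariant, allows the computation to land precisely on $(X^m_{d,n})'$ as defined by \eqref{E:trace_opt_norm}.
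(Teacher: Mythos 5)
Your proposal is correct and follows essentially the same route as the paper: reduce the embedding to the Hardy inequality \eqref{E:assumption} (so that Theorem~\ref{T:sufficiency} applies) and obtain optimality from Theorem~\ref{T:necessity} via the duality \eqref{E:novabis}, noting that $(X^m_{d,n})'(0,1)$ coincides with the norm $\|\cdot\|_{X_\sigma(0,1)}$ of \eqref{E:X-sigma}. The only cosmetic difference is that you prove the bound $\|Tf\|_{X^m_{d,n}(0,1)}\le\|f\|_{X(0,1)}$ directly by Hardy--Littlewood, Fubini and H\"older, whereas the paper invokes \cite[Proposition~3.1]{CP-Trans} for that same fact; your explicit appeal to Remark~\ref{gen50} to pass from non-increasing to arbitrary nonnegative $f$ is also the same gap-bridging step the paper relies on.
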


\bigskip

Let us point out that, in the special case when $\mu$ is Lebesgue measure, and hence $d=n$, Theorems \ref{T:sufficiency}, \ref{T:necessity} and \ref{T:optimal} overlap with results form \cite{CPS}.

\begin{proof}[Proof of Theorem~\ref{T:sufficiency}]
Consider first part (iii).  Theorem~\ref{P:endpoint}, part (iii), yields
\begin{equation}\label{E:endpoint1}
W\sp{m,1}(\Omega)\to L^{\frac{d}{n-m},1}(\overline{\Omega},\mu).
\end{equation}
Furthermore, since $\Omega$ is a~Lipschitz domain, one has that $W^mL^{\frac{n}{m},1}(\Omega) \to W^1L^{n,1}(\Omega) \to C(\overline \Omega)$, where the first embedding goes back to \cite{Oneil, Peetre} and the second one holds by a result of \cite{Stein} and the fact that $\Omega$ is an extension domain. Thus,
if $u\in W^mL^{\frac{n}{m},1}(\Omega)$, then $u$ is continuous in $\overline{\Omega}$, and
\begin{equation}\label{E:endpoint2}
\|u\|_{L^\infty(\overline{\Omega},\mu)} = \|u\|_{L^\infty(\Omega)} \lesssim\|u\|_{W^mL^{\frac{n}{m},1}(\Omega)}.
\end{equation}
Here, and throughout this proof, constants either explicitly appearing, or involved in the relations  $\lq\lq \lesssim"$ and $\lq\lq \approx"$,   depend on $n$, $m$, $d$, $\Omega $ and $\mu (\Omega)$.
Inequality \eqref{E:endpoint2} tells us that
\begin{equation}\label{E:right-trace-embedding}
W\sp mL\sp{\frac{n}{m},1}(\Omega)\to L\sp{\infty}(\overline{\Omega},\mu).
\end{equation}
 Inequalities~\eqref{E:endpoint1} and~\eqref{E:endpoint2} imply, via inequality \eqref{K} applied to the trace operator $T_\mu$, that there exists a constant $C$ such that
$$
K(t,u;L^{\frac{d}{n-m},1}(\overline{\Omega},\mu), L^\infty(\overline{\Omega},\mu))
\lesssim K(Ct,u; W^{m,1}(\Omega), W^mL^{\frac{n}{m},1}(\Omega)) \quad\text{for $t\in(0,\infty)$,}
$$
and for $u\in W^{m,1}(\Omega)$. Fix any such function $u$. By formulas~\eqref{E:fx2.14} and~~\eqref{E:fx2.13}, the latter inequality takes the form
\begin{equation}\label{gen3}
\int_0^{t^{\frac{d}{n-m}}} s^{-1+\frac{n-m}{d}} u^*_\mu(s)\,ds
\lesssim\int_0^{ct^{\frac{n}{n-m}}} s^{-\frac{m}{n}} \int_s^{\infty} |D^m u|^*(r) r^{-1+\frac{m}{n}}\,dr\,ds \quad\text{for $t\in(0,\infty)$,}
\end{equation}
for some constant $c$.
Via an estimate for the right-hand side of inequality~\eqref{gen3} as in the proof of~\cite[Theorem 4.2]{KP}, one can deduce that
\begin{align*}
\int_0^t s^{-1+\frac{n-m}{d}} u^*_\mu(\mu(\overline{\Omega}) s)\,ds
\lesssim \int_0^{t} s^{-1+\frac{n-m}{d}} \int_{\frac{s^{\frac{n}{d}}}{c}}^{1} |D^m u|^*(|\Omega|r) r^{-1+\frac{m}{n}}\,dr\,ds \quad\text{for $t\in(0,\infty)$,}
\end{align*}
for some constant  $c$. Here, we have made use of the fact that $|D^m u|^*$ vanishes outside $(0, |\Omega|)$.
Hence, by property~\eqref{E:hardy-lemma},
\begin{equation}\label{E:HLP-appl}
\int_0^1  h(t) t^{-1+\frac{n-m}{d}} u^*_\mu(\mu(\overline{\Omega}) t)\,dt
\lesssim \int_0^1  h(t) t^{-1+\frac{n-m}{d}} \int_{\frac{t^{\frac{n}{d}}}{c}}^1 |D^m u|^*(|\Omega| r) r^{-1+\frac{m}{n}}\,dr  \,dt,
\end{equation}
for any non-increasing function $h: (0,1) \rightarrow [0,\infty)$.
Given $f\in\Mpl(0,1)$, define the function $Sf \in \Mpl(0,1)$ as
$$
Sf(t)=t^{-1+\frac{n-m}{d}} \sup_{t<s<1} s^{1-\frac{n-m}{d}} f^*(s) \quad\text{for $t\in(0,1)$},
$$
and choose
$$
h(t)=\sup_{t<s<1} s^{1-\frac{n-m}{d}} f^*(s) \quad\text{for $t\in(0,1)$}
$$
in \eqref{E:HLP-appl}. This results in the inequality
\begin{equation}\label{E:S-1}
\int_0^1 Sf(t) u^*_\mu(\mu(\overline{\Omega})t) \,dt
\lesssim \int_0^1 Sf(t) \bigg(\int_{\frac{t^{\frac{n}{d}}}{c}}^1 |D^m u|^*(|\Omega| s) s^{-1+\frac{m}{n}}\,ds\bigg)  \,dt.
\end{equation}
Next, define the functional $\|\cdot\|_{X_\sigma(0,1)}$ on $\Mpl(0,1)$ by
\begin{equation}\label{E:X-sigma}
\|f\|_{X_\sigma(0,1)}=\bigg\|t^{-1+\frac{m}{n}} \int_0^{t^{\frac{d}{n}}} f^*(s)\,ds\bigg\|_{X'(0,1)}
\end{equation}
for $f\in \mathcal \Mpl(0,1)$. Since $-1+\frac{m}{n}+\frac{d}{n}\geq0$, it follows from~\cite[Proposition~3.1]{CP-Trans} that the functional $\|\cdot\|_{X_\sigma(0,1)}$ is equivalent to a rearrangement-invariant function norm. Define the operators $T$ and $T'$ by
\begin{equation*}
  Tf(t)=\int_{t^{\frac{n}{d}}}^1 f(s) s^{-1+\frac{m}{n}}\,ds  \quad \hbox{and}  \quad T'f(t)=t^{-1+\frac{m}{n}} \int_0^{t^{\frac{d}{n}}} f(s)\,ds
 \quad\text{for $t\in(0,1)$,}
\end{equation*}
for $f\in \mathcal \Mpl(0,1)$. Fubini's theorem ensures that equation~\eqref{E:duo} holds with this choice of $T$ and $T'$. Hence, equation~\eqref{E:novabis} is satisfied as well. Thus, since assumption~\eqref{E:assumption} implies that $T\colon X(0,1)\to Y(0,1)$, one  has that   $T'\colon Y'(0,1)\to X'(0,1)$, namely,
\begin{equation*}
\bigg\|t^{-1+\frac{m}{n}} \int_0^{t^{\frac{d}{n}}} f(s)\,ds\bigg\|_{X'(0,1)} \lesssim \|f\|_{Y'(0,1)}
\end{equation*}
for $f\in \mathcal \Mpl(0,1)$. In particular, inasmuch as  $\|\cdot\|_{Y'(0,1)}$ is a rearrangement-invariant function norm,
\begin{equation*}
\bigg\|t^{-1+\frac{m}{n}} \int_0^{t^{\frac{d}{n}}} f^{*}(s)\,ds\bigg\|_{X'(0,1)} \lesssim \|f\|_{Y'(0,1)}
\end{equation*}
for $f\in \mathcal \Mpl(0,1)$. The very definition of $\|\cdot\|_{X_\sigma(0,1)}$ implies that
$Y'(0,1)\to X_{\sigma}(0,1)$, whence,  by~\eqref{emb},
\begin{equation}\label{E:embedding}
X'_\sigma(0,1) \to Y(0,1).
\end{equation}
Owing to
 the definition of  associate function norm, to the pointwise inequality $f^*\leq Sf$ for $f\in \Mpl(0,1)$, to inequality~\eqref{E:S-1} and to H\"older's inequality, we get
\begin{align}\label{E:estimate-by-Sf}
\|u^*_\mu(\mu(\overline{\Omega}) t)\|_{X'_\sigma(0,1)}
&=\sup_{\|f\|_{X_\sigma(0,1)}\leq 1} \int_0^1 u^*_\mu(\mu(\overline{\Omega}) t) f^*(t)\,dt
\leq \sup_{\|f\|_{X_\sigma(0,1)}\leq 1} \int_0^1 u^*_\mu(\mu(\overline{\Omega}) t) Sf(t)\,dt\\
\nonumber
&\lesssim \sup_{\|f\|_{X_\sigma(0,1)}\leq 1} \int_0^1 \bigg(\int_{\frac{t^{\frac{n}{d}}}{c}}^1 |D^m u|^*(|\Omega|s) s^{-1+\frac{m}{n}}\,ds\bigg) Sf(t)\,dt\\
\nonumber
&\lesssim \sup_{\|f\|_{X_\sigma(0,1)}\leq 1} \bigg\|\int_{\frac{t^{\frac{n}{d}}}{c}}^1 |D^m u|^*(|\Omega|s) s^{-1+\frac{m}{n}}\,ds\bigg\|_{X'_\sigma(0,1)} \|Sf\|_{X_\sigma(0,1)}.
\end{align}
We claim that
\begin{equation}\label{E:boundedness_S}
\|Sf\|_{X_\sigma(0,1)} \lesssim\|f\|_{X_\sigma(0,1)}
\end{equation}
for $f\in \Mpl(0,1)$.
Indeed, by~\cite[Theorem 3.8]{KP},
$$
(Sf)^{**}(t) \lesssim(Sf^{**})(t) \quad\text{for $t\in(0,1)$}.
$$
Hence,
\begin{align*}
\|Sf\|_{X_\sigma(0,1)}
&=\left\|t^{-1+\frac{m+d}{n}} (Sf)^{**}(t^{\frac{d}{n}})\right\|_{X'(0,1)} \lesssim\left\|t^{-1+\frac{m+d}{n}} (Sf^{**})(t^\frac{d}{n})\right\|_{X'(0,1)} \approx \bigg\|\sup_{t^{\frac{d}{n}}<s<1} s^{1-\frac{n-m}{d}} f^{**}(s)\bigg\|_{X'(0,1)}\\
&\approx\left\|\sup_{t<s<1} s^{-1+\frac{d}{n}+\frac{m}{n}}f^{**}(s^{\frac{d}{n}})\right\|_{X'(0,1)} \approx\bigg\|\sup_{t<s<1} s^{-1+\frac{m}{n}} \int_0^{s^{\frac{d}{n}}} f^*(r)\,dr\bigg\|_{X'(0,1)}\\
&\approx\left\|\sup_{t<s<1} s^{-1+\frac{m}{n}} \int_0^s f^*(r^{\frac{d}{n}}) r^{-1+\frac{d}{n}}\,dr\right\|_{X'(0,1)} .
\end{align*}
By \cite[Theorem 3.9]{KP}, the last norm does not exceed  a constant times
$$\bigg\|s^{-1+\frac{m}{n}}\int_0^s f^*(r^{\frac{d}{n}}) r^{-1+\frac{d}{n}}\,dr\bigg\|_{X'(0,1)},$$
and the latter norm  agrees with
$\|f\|_{X_\sigma(0,1)}$. Hence, inequality
~\eqref{E:boundedness_S} follows.
\\ Thanks to inequalities~\eqref{E:estimate-by-Sf},~\eqref{E:boundedness_S}, \eqref{E:X-sigma} and to   the boundedness of the dilation operator on rearrangement-invariant spaces,
\begin{align*}
\left\|u^*_\mu(\mu(\overline{\Omega})t)\right\|_{X'_\sigma(0,1)}
\lesssim \left\|\int_{t^{\frac{n}{d}}}^1 |D^m u|^*(|\Omega|s) s^{-1+\frac{m}{n}}\,ds\right\|_{X'_\sigma(0,1)}\lesssim \left\| |D^m u|^*(|\Omega|t)\right\|_{X(0,1)}
= \left\|D^m u \right\|_{X(\Omega)}.
\end{align*}
Hence, by~\eqref{E:embedding},
$$
\|u\|_{Y(\overline{\Omega},\mu)}
= \left\|u^*_\mu(\mu(\overline{\Omega})t)\right\|_{Y(0,1)}
\lesssim\left\|u^*_\mu(\mu(\overline{\Omega}) t)\right\|_{X'_\sigma(0,1)}
\lesssim\left\|D^m u\right\|_{X(\Omega)}
\approx\left\| u\right\|_{W^mX(\Omega)}.
$$
Notice that the last inequality holds thanks to equation \eqref{gen21}.
Inequality {\eqref{E:4.1-iii}} is thus established.

\smallskip
\par\noindent
Concerning part (ii), as recalled in the proof of Theorem \ref{P:endpoint}, since $\Omega$ is a bounded domain with the cone property, it can be decomposed into a finite union  of bounded Lipschitz domains $\{\o_j\}$, with $j=1, \dots , J$. Let us denote by $u_j$ the restriction of $u$ to $\o_j$, for $j=1, \dots , J$. On applying inequality \eqref{gen3} to each  function $u_j$, and adding the resultant inequalities tell us that there exists a constant $c$ such that
\begin{equation}\label{gen4}
\int_0^{t^{\frac{d}{n-m}}} s^{-1+\frac{n-m}{d}}\sum _{j=1}^J (u_j)^*_\mu(s)\,ds
\lesssim \int_0^{ct^{\frac{n}{n-m}}} s^{-\frac{m}{n}} \int_s^{\infty}\sum _{j=1}^J |D^m u_j|^*(r) r^{-1+\frac{m}{n}}\,dr\,ds \quad\text{for $t\in(0,\infty)$}.
\end{equation}
By an iteration of inequality \eqref{gen5}, there exists a constant $c=c(J)$ such   that
\begin{equation}\label{gen6}
u_\mu^*(t) \leq \Big(\sum _{j=1}^J u\chi_{\o_j}\Big)_\mu ^*(t) =
  \Big(\sum _{j=1}^J u_j\Big)_\mu ^*(t) \leq \sum _{j=1}^J (u_j)^*_\mu (c t) \quad\text{for $t\in(0,\infty)$}.
\end{equation}
On the other hand,
\begin{equation}\label{gen7}
\sum _{j=1}^J |D^m u_j|^*(t) \leq J |D^m u|^*(t) \quad\text{for $t\in(0,\infty)$}.
\end{equation}
Owing to inequalities \eqref{gen4}--\eqref{gen7}, inequality  \eqref{gen3} continues to hold, for a  suitable  constant $C$, also in the present case. The remaining part of the proof is completely analogous to that  of assertion (iii), and will be omitted.

\smallskip
\par\noindent
The proof of the part (i) is analogous.
\end{proof}

\begin{proof}[Proof of Theorem~\ref{T:necessity}]
We shall provide details about  part  (ii). Part (i) can be proved analogously. Assume, without loss of generality, that the point $x_0$ appearing in \eqref{E:inf} agrees with $0$, and   denote by $B_r$ the ball centered at $0$,  with radius equal to $r$. Given any nonnegative  function $f\in X(0,\omega_n R^n)$, define the $m$-times weakly differentiable, non-increasing function $g: (0, \infty) \to [0, \infty)$ as
$$
g(t)=\begin{cases}\displaystyle  \chi_{(0,\omega_n R^n)}(t) \int_t^{\omega_n R^n} \int_{r_1}^{\omega_n R^n} \dots \int_{r_{m-1}}^{\omega_n R^n} f(r_m) r_m^{-m+\frac{m}{n}} \,dr_m \dots \,dr_1 & \quad\text{if $m \geq 2$},
\\ \\  \displaystyle
\chi_{(0,\omega_n R^n)}(t) \int_t^{\omega_n R^n}  f(r_1) r_1^{-1+\frac{1}{n}} \,dr_1   & \quad\text{if $m = 1$},
\end{cases}
$$
for {$t \in (0, \infty) $},
and the $m$-times weakly differentiable function $u: \rn \to [0, \infty)$ as $u(x)=g(\omega_n |x|^n)$ for $x\in \Rn$. Set $r(\varrho)=(g^{-1}(\varrho)/\omega_n)^{\frac{1}{n}}$ for $\varrho\in(0,\infty)$, where $g^{-1}$ stands for a (generalized) inverse of $g$. Then,
\begin{align*}
\mu(\{x\in \overline{\Omega}: u(x) >\varrho\})
&=\int_{\overline{\Omega}} \chi_{\{x\in \overline{\Omega}:~u(x)>\varrho\}}(x)\,d\mu(x)
=\int_{\overline{\Omega}} \chi_{\{x\in \overline{\Omega}:~g(\omega_n |x|^n) >\varrho\}}(x)\,d\mu(x)\\
&
=\int_{\overline{\Omega}} \chi_{\left\{x\in \overline{\Omega}:~|x|<r(\varrho)\right\}}(x)\,d\mu(x)
=\mu\left(\overline{\Omega} \cap B_{r(\varrho)}\right) \quad \hbox{for $\varrho \in (0, \infty)$.}
\end{align*}
Since $g(t)=0$ whenever $t\geq \omega_n R^n$, we have that $g^{-1}(\varrho)\leq \omega_n R^n$, and therefore $r(\varrho)\leq R$ for every $\varrho>0$. Consequently, by assumption~\eqref{E:infbar},
\begin{equation}\label{gen65}
\mu(\{x\in \overline{\Omega}: u(x) >\varrho\})
=\mu\left(\overline{\Omega} \cap B_{r(\varrho)}\right)
\gtrsim(g\sp{-1}(\varrho))^{\frac{d}{n}} \quad\text{for $\varrho\in(0,\infty)$}.
\end{equation}
Here, and throughout this proof, the relation $\lesssim$ holds up to constants depending on $n$, $m$, $d$ and $\mu$.
Inequality \eqref{gen65} implies that
$$
u^*_\mu(t)\geq  g(ct^{\frac{n}{d}}) \quad \hbox{for $t \in (0, \infty)$,}
$$
for some constant $c$ depending on the same quantities.
Hence, by the boundedness of the dilation operator in rearrangement-invariant spaces,
\begin{equation}\label{E:heart}
\|u\|_{Y(\overline{\Omega},\mu)}
= \|u^*_\mu(\mu(\overline{\Omega}) t)\|_{Y(0,1)}
\gtrsim \|g(t^{\frac{n}{d}})\|_{Y(0,1)}.
\end{equation}
Via an iterated use of Fubini's theorem one can show that
\begin{equation}\label{gen26}
g(t^{\frac nd}) \gtrsim \chi_{(0,\omega_n R^n/2)}(t^{\frac{n}{d}}) \int_{ct^{\frac nd}}^{1}  f(\omega_n R^n r) r^{-1+\frac{m}{n}} \,dr \qquad  \hbox{for $t \in (0, \infty)$,}
\end{equation}
whence
\begin{equation}\label{E:1}
\|g(t^{\frac{n}{d}})\|_{Y(0,1)}
\gtrsim \left\|\int_{t^{\frac{n}{d}}}^1 f(\omega_n R^n s) s^{-1+\frac{m}{n}}\,ds\right\|_{Y(0,1)}.
\end{equation}
On the other hand, by~\cite[inequality (4.20)]{CP-Trans},
\begin{equation}\label{E:2}
\|u\|_{W^mX(\Omega)} \lesssim \|f(\omega_n R^n t)\|_{X(0,1)}.
\end{equation}
Combining assumption~\eqref{E:4.1-iii} with inequalities~\eqref{E:heart} -- \eqref{E:2} completes the proof.
\end{proof}

\begin{proof}[Proof of Theorem~\ref{T:optimal}]
One can verify that
 the functional $f\mapsto \|f\|_{(X\sp m_{d,n})'(0,1)}$ is equivalent to a~rearrangement-invariant function norm. Moreover, by
\cite[Proposition~3.1]{CP-Trans},
\[
\left\|\int_{t\sp{\frac nd}}\sp1f(s) s\sp{-1+\frac mn}\,ds\right\|_{X\sp m_{d,n}(0,1)}
\leq
\|f\|_{X(0,1)}
\]
for every nonnegative function $f \in X(0,1)$.
Thus, by Theorem~\ref{T:sufficiency}, inequalities~\eqref{E:4.1-i},~\eqref{E:4.1-ii} and~\eqref{E:4.1-iii} hold, with $Y=X\sp m_{d,n}$, in cases (i), (ii), (iii), respectively. Consequently, embeddings~\eqref{E:optimal emb-i},~\eqref{E:optimal emb-ii} and~\eqref{E:optimal emb-iii} hold in the corresponding cases.
\\
It remains to prove that, under assumption \eqref{E:inf} or \eqref{E:infbar},
 the target space $X\sp m_{d,n}$ is optimal among rearrangement-invariant spaces. To this purpose, suppose that one of the embeddings~\eqref{E:optimal emb-i},~\eqref{E:optimal emb-ii} and~\eqref{E:optimal emb-iii} holds with $\|\cdot\|_{X\sp m_{d,n}(0,1)}$ replaced by some rearrangement-invariant function norm $\|\cdot\|_{Z(0,1)}$. Then, by Theorem~\ref{T:necessity},  inequality \eqref{E:assumption} holds with $Y(0,1)=Z(0,1)$, namely
\[
\left\|\int_{t^{\frac{n}{d}}}^1 f(s) s^{-1+\frac{m}{n}}\,ds\right\|_{Z(0,1)} \lesssim \|f\|_{X(0,1)}
\]
for every nonnegative function $f \in X(0,1)$.
The argument showing that~\eqref{E:assumption} implies~\eqref{E:embedding} in the proof of Theorem~\ref{T:sufficiency} tells us that
\begin{equation*}
X'_\sigma(0,1) \to Z(0,1),
\end{equation*}
where $\|\cdot\|_{X'_\sigma(0,1)}$ is defined as in~\eqref{E:X-sigma}. Since   $(X\sp m_{d,n})'(0,1)=X_\sigma(0,1)$, this yields $X\sp m_{d,n}(0,1)\to Z(0,1)$, thus establishing the optimality of the space $X\sp m_{d,n}$.
\end{proof}

%

\section{Main results -- slowly decaying measures}\label{low}

We are concerned here with $d$-Frostman measures for $d \in (0, n-m)$. As in the fast decaying regime considered in the previous section, our main purpose is to obtain reduction principles for Sobolev trace embeddings to Hardy-type inequalities. Though still necessary in the present range of values of $d$ (for measures decaying exactly like the power $d$ at some point) in view of Theorem \ref{T:necessity},   the Hardy inequality \eqref{E:assumption} is not anymore sufficient for the embedding of $W^{m}X(\o)$ into $Y(\o, \mu)$, or $Y(\overline \o, \mu)$. For instance, if $\mu_2$ is given by \eqref{mu2}, then inequality \eqref{E:assumption} can be shown to hold for $X(0,1)=Y(0,1)=L^{\frac{n-d}m, 1}(0,1)$, and yet $W^mL^{\frac{n-d}m, 1} (\o)$ is not embedded into $ L^{\frac{n-d}m, 1}(\o, \mu_2)$.
\par What renders the problem even more delicate now is the fact that sufficient conditions, depending only on $d \in (0, n-m)$, for trace embeddings of $W^{m}X(\o)$,
which are also necessary,   cannot be given. This is demonstrated by  examples \eqref{gen62} and \eqref{gen63} exhibited in Subsection \ref{borderslow}. They
 involve the same Sobolev domain space, and measures in the target space fulfilling the same couple of conditions \eqref{E:sup} and \eqref{E:inf}, but have a different optimal target norm.
\par In the situation at hand, we propose two versions of reduction principles, that can be well suited to deal with different classes of rearrangement-invariant norms. Their statements share  additional features with respect to Theorem \ref{T:sufficiency}. First, an extra Hardy-type inequality besides \eqref{E:assumption} is required, the latter being not sufficient on its own. Second, the target function norm $\|\cdot\|_{Y(0,1)}$ is a priori assumed  to be, in some proper sense, at least as strong as the Lebesgue function norm $\|\cdot\|_{L^{\frac{n-d}m}(0,1)}$, which, as shown by Theorem \ref{L:endpointsub} and Proposition \ref{sharpness},  naturally appears in the  weakest possible  Sobolev trace embedding for $W^mX(\o)$.
\par
As a final result in this section, we show that, under the additional structure assumption that the measure has a radially decreasing density with respect to Lebesgue measure, the conclusions are the same as in the case when $d\in [n-m, n]$. Namely,  the Hardy inequality \eqref{E:assumption} is necessary and sufficient for the embedding of $W^{m}X(\o)$ into $Y(\o, \mu)$, or $Y(\overline \o, \mu)$, and the optimal target norm is hence the one given by Theorem \ref{T:optimal}.  A borderline case  of this results has already been stated in Proposition \ref{P:radial-improvement}.

\smallskip

\par In the first version of our general reduction principle, which is the subject of the next theorem, the a priori assumption on the target function norm amounts to requiring that it has the form $\|\cdot\|_{Y^{\langle{\frac{n-d}m}\rangle}(0,1)}$, defined as in \eqref{Yalpha}, for some rearrangement-invariant function norm $\|\cdot\|_{Y (0,1)}$. Note that the choice $\|\cdot\|_{Y (0,1)}$=$\|\cdot\|_{L^1 (0,1)}$, the weakest rearrangement-invariant function norm, yields exactly $\|\cdot\|_{Y^{\langle{\frac{n-d}m}\rangle}(0,1)}= \|\cdot\|_{L^{\frac{n-d}m}(0,1)}$, up to equivalent norms. This follows, e.g., from  \cite[Theorem 4.1]{CPSS}.
%
%

\begin{theorem}{\rm{\bf [Reduction principle: case $0<d < n-m$. First version]}}\label{T:sufficiency-sub-main}
Let $\Omega$ be an open set with finite Lebesgue measure in $\rn$, $n\geq 2$, let $m\in \N$, with $m<n$, and  let $d \in (0,n-m)$. Let $\|\cdot\|_{X(0,1)}$ and $\|\cdot\|_{Y(0,1)}$ be rearrangement-invariant function  norms. Assume that there exists a constant
$C_1$ such that
\begin{equation}\label{E:two-operators1}
\left\|\int_{t^{\frac{n}{d}}}^1 f(s) s^{-1+\frac{m}{n}}\,ds\right\|_{Y(0,1)}  \leq C_1 \|f\|_{  X(0,1)}
\end{equation}
and
\begin{equation}\label{E:two-operators2} \left\| t\sp{-\frac{m}{n-d}}\int_0\sp{t\sp{\frac{n}{d}}}f(s)s\sp{-1+\frac{m}{n-d}}\,ds  \right\|_{Y(0,1)} \leq C_1 \|f\|_{  X(0,1)}
\end{equation}
for every nonnegative non-increasing function $f\in  X(0,1)$.
\\
\textup{(}i\textup{)} Let $\mu$ be a finite Borel measure on $\Omega$ fulfilling \eqref{E:sup}. Then
\begin{equation}\label{E:sobolev_embeddingsub-i}
\|u\|_{Y^{\langle{\frac{n-d}m}\rangle}({\Omega},\mu)} \leq C_2 \|\nabla\sp mu\|_{X(\Omega)}
\end{equation}
for some constant $C_2$ and every $u\in W^{m}_0X(\Omega)$,
\\
\textup{(}ii\textup{)} Assume, in addition,  that $\Omega$ is bounded and  has the cone property. Let   $\mu$ be a Borel measure on $\o$ fulfilling \eqref{E:sup}. Then
\begin{equation}\label{E:sobolev_embeddingsub-ii}
\|u\|_{Y^{\langle{\frac{n-d}m}\rangle}({\Omega},\mu)} \leq C_2 \|u\|_{W^mX(\Omega)}
\end{equation}
for some constant $C_2$ and every $u\in W^{m}X(\Omega)$.
\\
\textup{(}iii\textup{)} Assume that  $\Omega$ is a bounded Lipschitz domain. Let $\mu$ be a Borel measure on $\overline{\Omega}$ fulfilling \eqref{E:supbar}.  Then
\begin{equation}\label{E:sobolev_embeddingsub-iii}
\|u\|_{Y^{\langle{\frac{n-d}m}\rangle}(\overline{\Omega},\mu)} \leq C_2 \|u\|_{W^mX(\Omega)}
\end{equation}
for some constant $C_2$ and every $u\in W^mX(\Omega)$.
\\ In particular, the norm $\|\, \cdot \,\|_{Y^{\langle{\frac{n-d}m}\rangle}}$ can be replaced by the norm $\|\, \cdot \,\|_{Y}$ in inequalities \eqref{E:sobolev_embeddingsub-i}--\eqref{E:sobolev_embeddingsub-iii}.
\end{theorem}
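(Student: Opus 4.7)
The plan is to mirror the strategy used in the proof of Theorem~\ref{T:sufficiency}, replacing the fast-decay endpoint of Theorem~\ref{P:endpoint} by its slow-decay counterpart, Theorem~\ref{L:endpointsub}, and letting \emph{two} Hardy inequalities take the place of the single inequality~\eqref{E:assumption}. I treat case~(iii) in detail; cases~(i) and~(ii) then follow by the extension-by-zero and domain-decomposition arguments employed in the proof of Theorem~\ref{T:sufficiency}, combined with the respective parts of Theorem~\ref{L:endpointsub}. The two endpoint trace embeddings to be used on a bounded Lipschitz $\Omega$ are
\[
T_\mu\colon W^mL^{\frac{n-d}{m},1}(\Omega)\to L^{\frac{n-d}{m}}(\overline{\Omega},\mu)
\qquad\text{and}\qquad
T_\mu\colon W^mL^{\frac{n}{m},1}(\Omega)\to L^{\infty}(\overline{\Omega},\mu),
\]
the former coming from Theorem~\ref{L:endpointsub}(iii) and the latter from the classical embedding $W^mL^{n/m,1}(\Omega)\to C(\overline{\Omega})$, valid on bounded Lipschitz domains by extension and the first-order result of~\cite{Stein}.

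Applying the general interpolation inequality~\eqref{K} to $T_\mu$ between these two couples and invoking Holmstedt's formulas yields a pointwise estimate for the $K$-functional. The relevant instances are~\eqref{E:fx2.13} with $p_0=q_0=p:=\frac{n-d}{m}>1$ on the target side, and~\eqref{E:fx2.14} with $p_0=p$, $q_0=1$, $p_1=\frac{n}{m}$, $q_1=1$ on the domain side; the resulting parameter is $\alpha=\frac{n(n-d)}{md}$, and crucially $\alpha/p=\frac{n}{d}$. After the substitution $\tau=t^p$, under which $t^{\alpha}$ becomes $\tau^{n/d}$, one obtains
\[
\left(\int_0^\tau (T_\mu u)_\mu^{*}(s)^{p}\,ds\right)^{1/p}
\lesssim
\int_0^{c\tau^{n/d}}s^{\frac{m}{n-d}-1}|D^m u|^{*}(s)\,ds
+\tau^{1/p}\int_{c\tau^{n/d}}^{\infty}s^{\frac{m}{n}-1}|D^m u|^{*}(s)\,ds
\]
for $\tau\in(0,\infty)$ and some constant $c$ depending on $n,m,d,\Omega,\mu$.

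Dividing through by $\tau^{1/p}=\tau^{m/(n-d)}$, the left-hand side takes the form $\left(\frac{1}{\tau}\int_0^\tau (T_\mu u)_\mu^{*}(s)^{p}\,ds\right)^{1/p}$, whose $Y(0,1)$-norm -- after the $\mu(\overline{\Omega})$-rescaling built into~\eqref{norm} -- coincides with $\|T_\mu u\|_{Y^{\langle p\rangle}(\overline{\Omega},\mu)}$ by definition~\eqref{Yalpha}. Taking the $Y(0,1)$-norm of both sides and applying hypotheses~\eqref{E:two-operators2} and~\eqref{E:two-operators1} to the two resulting terms -- with $f$ being a suitable $|\Omega|$-rescaling of $|D^m u|^{*}$ to $(0,1)$, and with the boundedness of the dilation operator on any rearrangement-invariant space absorbing the constant $c$ -- one obtains
\[
\|T_\mu u\|_{Y^{\langle p\rangle}(\overline{\Omega},\mu)}\lesssim\|D^m u\|_{X(\Omega)}\approx\|u\|_{W^mX(\Omega)},
\]
the last equivalence being~\eqref{gen21}. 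This is~\eqref{E:sobolev_embeddingsub-iii}; the closing remark that $\|\cdot\|_{Y^{\langle p\rangle}}$ may be replaced by $\|\cdot\|_Y$ is immediate from~\eqref{gen36}.

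The main obstacle is the interpolation step: one must pair the right endpoints so that the two $K$-functional expansions align under the substitution $\tau=t^p$, producing exactly the exponent $n/d$ appearing on the left of the hypothesized Hardy inequalities~\eqref{E:two-operators1} and~\eqref{E:two-operators2}. For cases~(i) and~(ii) one additionally needs the $W^m_0$-variant of~\eqref{E:fx2.14} noted at the end of Section~\ref{back}, together with the standard decomposition of a domain with the cone property into finitely many Lipschitz pieces already used in the proof of Theorem~\ref{T:sufficiency}. Once the displayed pointwise estimate is secured, everything else reduces to routine rearrangement-invariant space manipulations.
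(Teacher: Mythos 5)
Your proposal matches the paper's proof essentially step for step: the same two endpoint trace embeddings (Theorem~\ref{L:endpointsub}(iii) and $W^mL^{n/m,1}(\Omega)\to C(\overline\Omega)$), the same $K$-functional interpolation via~\eqref{K} and Holmstedt's formulas with $p_0=\tfrac{n-d}{m}$, $p_1=\tfrac{n}{m}$ giving $\alpha/p=n/d$, the same pointwise estimate, and the same application of~\eqref{E:two-operators1}--\eqref{E:two-operators2} plus dilation boundedness to conclude. The only point you leave implicit, and which the paper handles first, is that the trace operator $T_\mu$ is a priori well defined on $W^mX(\Omega)$: this requires $X(0,1)\to L^{\frac{n-d}{m},1}(0,1)$, which the paper extracts from hypothesis~\eqref{E:two-operators2} with $Y(0,1)=L^1(0,1)$ via~\eqref{l1linf} and Fubini.
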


The  alternative form of our reduction principle is stated in the following result. Besides the   Hardy-type operator appearing in inequalities \eqref{E:assumption} and  \eqref{E:two-operators1}, an unconventional operator involving the product of powers of two Hardy-type operators comes now  into play.
The fact that only rearrangement-invariant target function norms,  which are not weaker  than $\|\cdot \|_{L^{\frac{n-d}m}(0,1)}$, are allowed is now prescribed in a different fashion. Specifically, they are required  to have the form
$ \|\cdot\|_{Z^{\{\frac{n-d}m\}}(0,1)}
$, defined as in \eqref{Zp},
for some rearrangement-invariant function norm $\|\cdot \|_{Z(0,1)}$. Observe that, again, the choice $\|\cdot\|_{Z (0,1)}$=$\|\cdot\|_{L^1 (0,1)}$ results in $ \|\cdot\|_{Z^{\{\frac{n-d}m\}}(0,1)}= \|\cdot\|_{L^{\frac{n-d}m}(0,1)}$.

\begin{theorem}{\rm{\bf [Reduction principle: case $0<d<n-m$. Second version]}}\label{T:product-of-operators}
Let $\Omega$ be an open set with finite Lebesgue measure in $\rn$, $n\geq 2$, let $m\in \N$, with $m<n$, and let $d \in (0,n-m)$. Let $\|\cdot\|_{X(0,1)}$ and $\|\cdot\|_{Y(0,1)}$ be rearrangement-invariant function  norms. Assume that there exists  a constant $C_1$ such that
\begin{equation}\label{E:assumptionsub1}
\left\|\int_{t^{\frac{n}{d}}}^1 f(s) s^{-1+\frac{m}{n}}\,ds\right\|_{Z^{\{\frac{n-d}m\}}(0,1)} \leq C_1 \|f\|_{X(0,1)}
\end{equation}
and
\begin{equation}\label{E:assumptionsub2}
\left\|\bigg(\int_{t^{\frac{n}{d}}}^1 f(s) s^{-1+\frac{m}{n}}\,ds\bigg)^{\frac m{n-d}}   \bigg(t^{-\frac m{n-d}}\int _0^{t^{\frac n d}} f(s) s^{-1+\frac m{n-d}}\, ds\bigg)^{1-\frac m{n-d}}\right\|_{Z^{\{\frac{n-d}m\}}(0,1)} \leq C_1 \|f\|_{X(0,1)}
\end{equation}
for every nonnegative non-increasing function $f\in X(0,1)$.
\\
\textup{(}i\textup{)} Let $\mu$ be a finite Borel measure on $\Omega$ fulfilling \eqref{E:sup}. Then there exists a constant $C_2$ such that
\begin{equation}\label{E:sobolev-alternate-1}
\|u\|_{Z^{\{\frac{n-d}m\}}({\Omega},\mu)} \leq C_2 \|\nabla ^m u\|_{X(\Omega)}
\end{equation}
for every $u\in W^{m}_0X(\Omega)$.
\\
\textup{(}ii\textup{)} Assume, in additon, that $\Omega$ is bounded and  has the cone property. Let $\mu$ be a Borel measure on $\o$ fulfilling \eqref{E:sup}. Then there exists a constant $C_2$ such that
\begin{equation}\label{E:sobolev-alternate-2}
\|u\|_{Z^{\{\frac{n-d}m\}}({\Omega},\mu)} \leq C_2 \|u\|_{W^mX(\Omega)}
\end{equation}
for every $u\in W^{m}X(\Omega)$.
\\
\textup{(}iii\textup{)} Assume that  $\Omega$ is a bounded Lipschitz domain. Let $\mu$ be a Borel measure on $\overline{\Omega}$ fulfilling \eqref{E:supbar}.  Then there exists a constant $C_2$ such that
\begin{equation}\label{E:sobolev-alternate-3}
\|u\|_{Z^{\{\frac{n-d}m\}}(\overline{\Omega},\mu)} \leq C_2 \|u\|_{W^mX(\Omega)}
\end{equation}
for every $u\in W^mX(\Omega)$.
\end{theorem}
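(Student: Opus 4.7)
The proof will follow the strategy of Theorem~\ref{T:sufficiency}, replacing the fast-decay endpoint of Theorem~\ref{P:endpoint} with the slow-decay endpoint of Theorem~\ref{L:endpointsub}. I focus on part~(iii); parts~(i) and~(ii) will follow by the extension and localization arguments used in the analogous parts of Theorems~\ref{P:endpoint} and~\ref{L:endpointsub}.

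I would first identify two endpoint trace embeddings: the sharp embedding $W^m L^{(n-d)/m,1}(\o)\to L^{(n-d)/m}(\overline\o,\mu)$ from Theorem~\ref{L:endpointsub}(iii), and the embedding $W^m L^{n/m,1}(\o)\to L^\infty(\overline\o,\mu)$, which follows from the chain $W^m L^{n/m,1}(\o)\to W^1 L^{n,1}(\o)\to C(\overline\o)$ used in the proof of Theorem~\ref{T:sufficiency}. Applying \eqref{K} to the bounded linear operator $T_\mu$ between these two pairs of spaces, and then invoking Holmstedt's formulas \eqref{E:fx2.13} (target) and \eqref{E:fx2.14} (source, with $p_0=(n-d)/m$, $q_0=1$, $p_1=n/m$, $q_1=1$ and scaling exponent $\alpha=n(n-d)/(md)$), turns the K-functional comparison into the pointwise estimate
\begin{equation*}
\bigg(\int_0^{t^{(n-d)/m}} u^*_\mu(s)^{\frac{n-d}m}\,ds\bigg)^{\!\!\frac{m}{n-d}} \lesssim \int_0^{ct^\alpha} s^{-1+\frac{m}{n-d}} g^*(s)\,ds + ct\int_{ct^\alpha}^\infty s^{-1+\frac mn} g^*(s)\,ds
\end{equation*}
for $t>0$, where $g=|D^m u|$. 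Substituting $\tau=t^{(n-d)/m}$ (so $t^\alpha=\tau^{n/d}$), dividing by $\tau^{m/(n-d)}$, and absorbing the constant $c$ in the integration cutoffs via the boundedness of the dilation operator on the relevant rearrangement-invariant spaces, the right-hand side becomes equivalent to $T_1(g^*)(\tau)+T_2(g^*)(\tau)$ for the operators $T_1,T_2$ in hypotheses \eqref{E:assumptionsub1} and \eqref{E:assumptionsub2}, giving
\begin{equation*}
\bigl[(v^{(n-d)/m})^{**}(\tau)\bigr]^{\frac{m}{n-d}} \lesssim T_1(g^*)(\tau) + T_2(g^*)(\tau), \qquad v=u^*_\mu.
\end{equation*}

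The plan to finish is to take the $\|\cdot\|_{Z^{\{(n-d)/m\}}}$-norm of this pointwise bound, exploiting the algebraic identity $\|h\|_{Z^{\{\alpha\}}}^{\alpha}=\|h^{\alpha}\|_Z$ from \eqref{Zp} and the inequality $v(\tau)\le\bigl[(v^{(n-d)/m})^{**}(\tau)\bigr]^{m/(n-d)}$ that follows from the monotonicity of $v$. The two hypotheses will be used in a coupled manner by splitting $\tau$ according to the relative sizes of $T_1(g^*)(\tau)$ and $T_2(g^*)(\tau)$: on the region $\{T_1(g^*)\ge T_2(g^*)\}$, the elementary bound $T_1(g^*)+T_2(g^*)\lesssim T_1(g^*)$ combined with \eqref{E:assumptionsub1} yields the desired estimate, while on the complementary region, where $T_2(g^*)$ dominates, the geometric mean $T_1(g^*)^{m/(n-d)}T_2(g^*)^{1-m/(n-d)}$ matches the magnitude of the sum up to constants depending only on $m,n,d$ near the transition range, and hypothesis \eqref{E:assumptionsub2} takes over.

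The main obstacle is this last step: the pointwise bound only controls $v$ by a \emph{sum} of $T_1(g^*)$ and $T_2(g^*)$, whereas \eqref{E:assumptionsub2} provides norm control of the \emph{product} $T_1^{m/(n-d)}T_2^{1-m/(n-d)}$. Handling this discrepancy faithfully requires a careful treatment of the two regimes, likely supplemented by a sharpening of the pointwise estimate via a direct Riesz-potential argument with respect to Frostman measures (in the spirit of~\cite{KK}) that intrinsically yields the multiplicative structure of \eqref{E:assumptionsub2}, together with an application of the Hardy--Littlewood--P\'olya principle to pass from integral estimates to the $Z^{\{(n-d)/m\}}$-norm bound.
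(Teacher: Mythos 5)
Your setup is sound: the two endpoint embeddings are the right ones, the $K$-functional argument via \eqref{K}, \eqref{E:fx2.13}, \eqref{E:fx2.14} produces the correct pointwise estimate (this is \eqref{E:pointwise} in the paper, with $S_1, S_2$ from \eqref{S12} in place of your $T_1, T_2$), and you correctly isolate the obstacle. But the obstacle is genuine and your proposed way around it does not work.

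Your splitting by $\{S_1 g\geq S_2 g\}$ versus $\{S_2 g> S_1 g\}$ fails on the second set. There you have $S_1g+S_2g\lesssim S_2g$, but \eqref{E:assumptionsub2} only controls the geometric mean $(S_1g)^{\frac m{n-d}}(S_2g)^{1-\frac m{n-d}}$, and on that set this satisfies $(S_1g)^{\frac m{n-d}}(S_2g)^{1-\frac m{n-d}}< S_2g$, with an arbitrarily large gap when $S_2g\gg S_1g$. So near the transition the magnitudes match, as you note, but away from it the hypothesis bounds a strictly smaller quantity than the one you need, and no choice of constant depending only on $m,n,d$ fixes this. The alternative you sketch (a direct Riesz-potential argument producing the product structure) is not developed and is not how the difficulty is resolved.

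The missing idea is algebraic, not potential-theoretic. Introduce the averaging operator $Pf(t)=t^{-\frac m{n-d}}\int_0^t s^{-1+\frac m{n-d}}f(s)\,ds$ and observe, via Fubini, that
\begin{equation*}
PS_1 f=\tfrac{n-d}{m}\,(S_1+S_2)f.
\end{equation*}
This converts the \emph{sum} $S_1g+S_2g$ on the right of \eqref{E:pointwise} into the single expression $PS_1g$. Then set $\beta=\tfrac{n-d}m$ and define the non-increasing auxiliary function $h=\kappa(PS_1g)^{1-1/\beta}(S_1g)^{1/\beta}$ with $\kappa=\beta^{1/\beta}$. A direct computation (differentiating $F(t)^\beta$, where $F(t)=\int_0^t s^{-1+1/\beta}S_1g(s)\,ds$) shows that $\bigl((h^\beta)^{**}(t)\bigr)^{1/\beta}=PS_1g(t)$ exactly. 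Thus \eqref{E:pointwise} becomes $\int_0^{ct}u_\mu^*(\mu(\overline\Omega)s)^\beta\,ds\lesssim\int_0^t h(s)^\beta\,ds$, and the Hardy--Littlewood--P\'olya principle \eqref{E:HLP} (applicable because $h^\beta$ is non-increasing) gives $\|u_\mu^*(\mu(\overline\Omega)\cdot)\|_{Z^{\{\beta\}}}\lesssim\|h\|_{Z^{\{\beta\}}}$. Finally, expanding $h$ back using the same operator identity yields $h\approx S_1g+(S_1g)^{1/\beta}(S_2g)^{1-1/\beta}$, and \emph{these} two terms are precisely what hypotheses \eqref{E:assumptionsub1} and \eqref{E:assumptionsub2} control. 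In short: the product in \eqref{E:assumptionsub2} is not matched against the sum $S_1g+S_2g$ region by region; rather, the sum is absorbed into $PS_1g$, whose $\beta$-th power has an exact primitive, and that primitive's decomposition is what reproduces the sum-plus-product structure.

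So your proposal correctly diagnoses the issue but leaves the key step unproven, and the specific splitting you suggest to close it is incorrect.
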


Our last statement concerns the enhanced results mentioned above  for Frostman measures with a radially decreasing density.

\begin{theorem}{\rm{\bf [Improved reduction principle for radially decreasing densities]}}\label{T:radial}
Let $\Omega$ be an open {set} with finite Lebesgue measure in $\rn$, $n\geq 2$, let $m\in \N$, with $m<n$, and $d \in (0, n-m)$. Let $\|\cdot\|_{X(0,1)}$ and $\|\cdot\|_{Y(0,1)}$ be rearrangement-invariant {function} norms. Assume that there exists a constant
$C_1$ such that
\begin{equation}\label{E:assumption-radial}
\left\|\int_{t^{\frac{n}{d}}}^1 f(s) s^{-1+\frac{m}{n}}\,ds\right\|_{Y(0,1)} \leq C_1 \|f\|_{  X(0,1)}
\end{equation}
for every nonnegative non-increasing function $f\in  X(0,1)$.
\\
\textup{(}i\textup{)} If $\mu$ is a finite Borel measure on $\o$ fulfilling \eqref{E:sup}
and having the form \eqref{E:mu-radial} for  some  $x_0\in\Omega$,
 then
\begin{equation}\label{E:radial-general-0}
\|u\|_{Y({\Omega},\mu)} \leq C_2\|\nabla ^m u\|_{X(\Omega)}
\end{equation}
for some constant $C_2$ and every $u\in W^{m}_0X(\Omega)$.
\\
\textup{(}ii\textup{)}
Assume, in addition, that  $\Omega$ is  bounded and  has the cone property. If $\mu$ is a Borel measure on $\o$ fulfilling \eqref{E:sup}
and having the form \eqref{E:mu-radial} for  some  $x_0\in\Omega$,
then
\begin{equation}\label{E:radial-general}
\|u\|_{Y({\Omega},\mu)} \leq C_2 \|u\|_{W^mX(\Omega)}
\end{equation}
for some constant $C_2$ and every $u\in W^{m}X(\Omega)$.
\\
\textup{(}iii\textup{)}
Assume that  $\Omega$ is a bounded Lipschitz domain. If $\mu$ is a Borel measure on $\o$ fulfilling \eqref{E:supbar}
and having the form \eqref{E:mu-radial} for  some  $x_0\in \overline \Omega$,
 then
\begin{equation}\label{E:radial-general-bar}
\|u\|_{Y(\overline{\Omega},\mu)} \leq C_2 \|u\|_{W^mX(\Omega)}
\end{equation}
for some constant $C_2$ and every $u\in W^mX(\Omega)$.
\\ Conversely, assume that any of properties  \textup{(}i\textup{)}, \textup{(}ii\textup{)}, \textup{(}iii\textup{)} holds. If, in addition, $\mu$ satisfies condition \eqref{E:inf} in cases \textup{(}i\textup{)} and \textup{(}ii\textup{)}, or \eqref{E:infbar} in case \textup{(}iii\textup{)}, then inequality \eqref{E:assumption-radial} holds.
\\ In particular, inequalities \eqref{E:radial-general-0}--\eqref{E:radial-general-bar} hold with $Y=X_{d,n}^m$, where $\|\,\cdot \,\|_{X_{d,n}^m(0,1)}$ is the function norm given by \eqref{E:trace_opt_norm}, and the latter function norm is optimal in the  pertaining inequality if, in addition, $\mu$ satisfies condition \eqref{E:inf} in cases \textup{(}i\textup{)} and \textup{(}ii\textup{)}, or \eqref{E:infbar} in case \textup{(}iii\textup{)}.
\end{theorem}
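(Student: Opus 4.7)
The plan is to exploit the radial decreasing structure of $g$ to obtain a pointwise comparison between the $\mu$-rearrangement and the Lebesgue rearrangement of $u$, and combine it with the standard Sobolev pointwise rearrangement estimate for Lebesgue measure. This reduces the trace inequality directly to the single Hardy inequality \eqref{E:assumption-radial}, bypassing the second one required in Theorem~\ref{T:sufficiency-sub-main} for general slowly decaying measures.

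The first ingredient is the pointwise rearrangement bound
\begin{equation*}
u^*_\mu(t)\leq u^*\bigl(c\,t^{n/d}\bigr)
\end{equation*}
for $t$ in the relevant range, with $c$ depending only on $n$, $d$ and the local geometry of $\Omega$ at $x_0$. To see this, fix $\lambda>0$ and let $E_\lambda=\{x\in\Omega: |u(x)|>\lambda\}$ (or $\overline\Omega$ in case~(iii)). The function $y\mapsto g(|y-x_0|)\chi_\Omega(y)$ has superlevel sets of the form $B_\rho(x_0)\cap\Omega$, so the Hardy-Littlewood inequality~\eqref{E:HL} yields
\begin{equation*}
\mu(E_\lambda) = \int_{E_\lambda} g(|y-x_0|)\,dy \leq \int_{B_{\rho_\lambda}(x_0)\cap\Omega} g(|y-x_0|)\,dy = \mu\bigl(B_{\rho_\lambda}(x_0)\cap\Omega\bigr)\leq C \rho_\lambda^d,
\end{equation*}
where $\rho_\lambda$ is determined by $|B_{\rho_\lambda}(x_0)\cap\Omega|=|E_\lambda|$ and the last inequality is \eqref{E:sup} (resp.\ \eqref{E:supbar}). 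The hypotheses on $\Omega$ (cone property in (ii), Lipschitz boundary in (iii)), together with the positivity of $\operatorname{dist}(x_0,\partial\Omega)$ in~(i), ensure the uniform volume estimate $|B_\rho(x_0)\cap\Omega|\gtrsim \rho^n$ for all $\rho\leq \operatorname{diam}\Omega$; hence $\rho_\lambda\lesssim |E_\lambda|^{1/n}$, and passing to the generalized inverses of the distribution functions delivers the claim.

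The second ingredient is the classical pointwise estimate
\begin{equation*}
u^*(t)\lesssim \int_t^{|\Omega|}|D^m u|^*(s)\,s^{-1+m/n}\,ds\quad\text{for $t\in(0,|\Omega|)$,}
\end{equation*}
obtained by iterating the P\'olya-Szeg\"o inequality $m$ times on $W^m_0X(\Omega)$ and combining it with the Lipschitz-decomposition and extension operators already employed in the proofs of Theorems~\ref{P:endpoint} and~\ref{T:sufficiency} so as to cover $W^mX(\Omega)$ in cases (ii) and (iii). Combining both ingredients, rescaling $s\to |\Omega|s$ inside the integral, and invoking the boundedness of the dilation operator on rearrangement-invariant spaces yields
\begin{equation*}
\|u\|_{Y(\Omega,\mu)}=\|u^*_\mu(\mu(\Omega)t)\|_{Y(0,1)}\lesssim \bigg\|\int_{t^{n/d}}^1 |D^m u|^*(|\Omega|s)\,s^{-1+m/n}\,ds\bigg\|_{Y(0,1)},
\end{equation*}
whereupon the Hardy inequality \eqref{E:assumption-radial}, applied to the non-increasing function $f(s)=|D^m u|^*(|\Omega|s)$, bounds the right-hand side by a constant times $\|D^m u\|_{X(\Omega)}\approx \|u\|_{W^mX(\Omega)}$ via \eqref{gen21}, proving inequalities \eqref{E:radial-general-0}--\eqref{E:radial-general-bar}.

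The converse direction is immediate from Theorem~\ref{T:necessity}, whose hypotheses involve no structural assumption on $\mu$ beyond the power bounds. The \emph{in particular} statement follows on specializing $Y=X^m_{d,n}$: inequality \eqref{E:assumption-radial} with this target is built into the definition \eqref{E:trace_opt_norm} and is precisely the content of \cite[Proposition~3.1]{CP-Trans}, already invoked in the proof of Theorem~\ref{T:optimal}, so the sufficiency just proved yields the trace inequality; its optimality then follows by combining the converse with the associate-norm duality argument from the end of the proof of Theorem~\ref{T:optimal}. The most delicate step in this plan is the first rearrangement bound in case (iii) when $x_0\in\partial\Omega$: the crucial point is that the relative volume estimate $|B_\rho(x_0)\cap\overline\Omega|\gtrsim \rho^n$ survives, being a consequence of the interior cone condition at $x_0$ supplied by the Lipschitz regularity of $\partial\Omega$.
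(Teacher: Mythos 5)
There is a genuine gap in your second ingredient. You claim the pointwise estimate
\begin{equation*}
u^*(t)\lesssim \int_t^{|\Omega|}|D^m u|^*(s)\,s^{-1+m/n}\,ds
\end{equation*}
by ``iterating P\'olya--Szeg\"o $m$ times.'' This estimate is false as stated, and the proposed route to it does not work. For $m=1$, the Talenti/P\'olya--Szeg\"o argument on $W^1_0$ gives a bound with $|\nabla u|^{**}$, not $|\nabla u|^*$; unwinding $|\nabla u|^{**}$ by Fubini produces an extra term of the form $t^{-1+1/n}\int_0^t|\nabla u|^*(s)\,ds$. For $m\ge 2$, the Riesz potential representation plus O'Neil's inequality likewise gives the two-term estimate $u^*(t)\lesssim t^{-1+m/n}\int_0^t|\nabla^m u|^*(s)\,ds+\int_t^{|\Omega|}|\nabla^m u|^*(s)s^{-1+m/n}\,ds$, and the extra term is precisely what generates the second operator $S_2$ in Theorem~\ref{T:sufficiency-sub-main}. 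Moreover, as the paper itself records in Section~\ref{intro}, iterating P\'olya--Szeg\"o fails outright when $m>1$, even in $W^m_0X(\Omega)$: the symmetral of a Sobolev function of order $m\ge 2$ need not have controlled higher derivatives. So if you track through your argument with the correct two-term pointwise estimate, you recover exactly the two Hardy conditions of Theorem~\ref{T:sufficiency-sub-main}, and the whole point of Theorem~\ref{T:radial} --- that for radially decreasing densities the \emph{second} condition can be dropped --- is lost.

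The paper's proof avoids any such pointwise estimate by a different device. Your first ingredient (the bound $u^*_\mu(t)\le u^*(ct^{n/d})$ from the radial density via Hardy--Littlewood and \eqref{E:sup}) is correct and matches the paper's step. But rather than chaining with a pointwise Sobolev estimate, the paper introduces the auxiliary rearrangement-invariant norm $\|f\|_{Z(0,1)}=\|f^*(t^{n/d})\|_{Y(0,1)}$ (verifying that it is a function norm requires a small duality argument), and then observes two things: (a) the pointwise rearrangement bound shows $Y(\overline\Omega,\mu)\supset Z(\Omega)$ in the appropriate sense, and (b) by definition of $Z$, the Hardy inequality~\eqref{E:assumption-radial} for the pair $(X,Y)$ with the $t^{n/d}$ kernel is the same as the Hardy inequality~\eqref{E:assumption} for $(X,Z)$ with $d=n$. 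Then Theorem~\ref{T:sufficiency}, applied with Lebesgue measure (so $d=n\ge n-m$, the fast-decaying regime, for which a \emph{single} Hardy condition suffices by the $K$-functional argument), gives $W^mX(\Omega)\to Z(\Omega)$, and combining with (a) finishes. The fact that a single Hardy condition suffices for $d=n$ is not a pointwise fact --- it comes from the supremum-operator/duality argument in the proof of Theorem~\ref{T:sufficiency} --- and this is the step your proposal cannot reproduce by symmetrization. Your treatment of the converse and the ``in particular'' assertion is fine.
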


\begin{proof}[Proof of Theorem \ref{T:sufficiency-sub-main}]
We focus on part  (iii), the proofs of parts (i) and (ii) being analogous. To begin with, we claim  that, under assumption \eqref{E:two-operators2}, the trace operator $T_\mu$ is well defined on $W^mX(\o)$. This follows from the fact that, if  inequality \eqref{E:two-operators2} is fulfilled for some rearrangement-invariant function norm $Y(0,1)$, then, by property \eqref{l1linf}, it is also satisfied with $Y(0, 1)=L^1(0,1)$. An application of  inequality \eqref{E:two-operators2} with this choice of $Y(0, 1)$ and with $f=f^*$, and the use of Fubini's theorem tell us that
$X(0,1) \to L^{\frac{n-d}m,1}(0,1)$, whence  $W^mX(\o) \to W^m L^{\frac{n-d}m,1}(\o)$.
Therefore, our claim follows from Theorem \ref{L:endpointsub}, part (iii).
\\ Next, by the same
 theorem
we have that
\begin{equation}\label{E:left-trace-embedding}
W\sp mL\sp{\frac{n-d}{m},1}(\Omega)\to L\sp{\frac{n-d}{m}}(\overline{\Omega},\mu).
\end{equation}
Moreover, embedding~\eqref{E:right-trace-embedding} still holds. From the endpoint estimates~\eqref{E:left-trace-embedding} and~\eqref{E:right-trace-embedding}, and property \eqref{K} applied when $T$ is the trace operator $T_\mu$, one can deduce that
\begin{equation}\label{gen25}
K(u,t;L\sp{\frac{n-d}{m}}(\overline{\Omega},\mu),L\sp{\infty}(\overline{\Omega},\mu))
\lesssim
K(u,Ct;W\sp mL\sp{\frac{n-d}{m},1}(\Omega),W\sp mL\sp{\frac{n}{m},1}(\Omega))\quad\text{for $t\in(0,\infty)$,}
\end{equation}
for some constant $C$ and  every $u\in W\sp{m}L\sp{\frac{n-d}{m},1}(\Omega)$. Here, and in the remaining part of this proof, the constants in the relation $\lesssim$, and all other constants, depend only on $n,m,d$, $\mu$ and $\o$.
On
applying formulas~\eqref{E:fx2.14},~\eqref{E:fx2.13} and a change of variables, we infer from inequality \eqref{gen25} that
\begin{align*}
 \left(\frac{1}{t}\int_0^{ct} u^*_\mu(\mu(\overline{\Omega})s)^{\frac{n-d}{m}}\,ds\right)^{\frac{m}{n-d}}
 &\lesssim \int_{t^{\frac{n}{d}}}^\infty |D^mu|^*(|\Omega| s) s^{-1+\frac{m}{n}}\,ds\\
 &+t^{-\frac{m}{n-d}} \int_0^{t^{\frac{n}{d}}} |D^mu|^*(|\Omega| s) s^{-1+\frac{m}{n-d}}\,ds \quad \textup{for $t\in (0,\infty)$,}
\end{align*}
for some constant $c$.
Since $|D^mu|^*$ vanishes outside $(0,|\Omega|)$, we hence deduce that
\begin{align}\label{E:pointwise}
 \left(\frac{1}{t}\int_0^{ct} u^*_\mu(\mu(\overline{\Omega})s)^{\frac{n-d}{m}}\,ds\right)^{\frac{m}{n-d}}
 &\lesssim \int_{t^{\frac{n}{d}}}^1 |D^mu|^*(|\Omega| s) s^{-1+\frac{m}{n}}\,ds\\
 \nonumber
 & \quad +t^{-\frac{m}{n-d}} \int_0^{t^{\frac{n}{d}}} |D^mu|^*(|\Omega| s) s^{-1+\frac{m}{n-d}}\,ds \quad \textup{for $t\in (0,1)$.}
\end{align}
Define the operators $S_1$ and $S_2$ as
\begin{equation}\label{S12}
S_1f(t)=\int_{t\sp{\frac{n}{d}}}\sp{1}f(s)s\sp{-1+\frac{m}{n}}\,ds \quad\text{and}\quad
S_2f(t)=t\sp{-\frac{m}{n-d}}\int_0\sp{t\sp{\frac{n}{d}}}f(s)s\sp{-1+\frac{m}{n-d}}\,ds\quad\text{for $t\in(0,1)$,}
\end{equation}
for $f\in\Mpl(0,1)$. From inequality~\eqref{E:pointwise} and the boundedness of the dilation operator on rearrangement-invariant spaces, one obtains that
$$
\left\|(u^*_{\mu}(\mu(\overline{\Omega})\, \cdot \, )\sp{\frac{n-d}{m}})\sp{**}(t)\sp{\frac{m}{n-d}}\right\|_{Y(0,1)}
\lesssim \left\|(S_1+S_2)(|D\sp mu|\sp*(|\Omega|t))\right\|_{Y(0,1)}.
$$
Now, thanks to inequalities  \eqref{E:two-operators1} and \eqref{E:two-operators2},
\[
\left\|(u^*_{\mu}(\mu(\overline{\Omega}) \, \cdot \,)\sp{\frac{n-d}{m}})\sp{**}(t)\sp{\frac{m}{n-d}}\right\|_{Y(0,1)}
\lesssim
\left\|D\sp mu\right\|_{X(\Omega)}
\]
for $u\in W\sp{m}L\sp{\frac{n-d}{m},1}(\Omega)$, whence inequality~\eqref{E:sobolev_embeddingsub-iii} follows, via the very definition of the norm $\|\cdot\|_{Y^{\langle\frac{n-d}{m}\rangle}(0,1)}$.
\\
The  assertion about the replacement of
$Y^{\langle\frac{n-d}{m}\rangle}$ with $Y$ is a consequence of inequality \eqref{gen36}.
%
%
\end{proof}

\smallskip

\begin{proof}[Proof of Theorem \ref{T:product-of-operators}] As in the proof of Theorem \ref{T:sufficiency-sub-main}, we  carry out the argument in case (iii). As a preliminary step, let us show that the trace operator $T_\mu$ is actually well defined on $W^mX(\o)$ under assumption \eqref{E:assumptionsub2}. If the latter assumption is fulfilled for some rearrangement-invariant norm $\|\cdot\|_{Z (0,1)}$, then, by \eqref{l1linf}, it is a fortiori satisfied with $\|\cdot\|_{Z (0,1)}=\|\cdot\|_{L^1 (0,1)}$. This choice yields $\|\cdot\|_{Z^{\{\frac{n-d}m\}} (0,1)} = \|\cdot\|_{L^{\frac{n-d}m} (0,1)}$, and inequality \eqref{E:assumptionsub2}, after the use of Fubini's theorem, reads
\begin{align}\label{gen37}
\bigg(\int _0^1 f^*(r) r^{-1+\frac mn}\int _0^{r^{\frac dn}}t^{-1+\frac m{n-d}}\bigg(\int _0^{t^{\frac nd}}f^*(s)s^{-1+\frac m{n-d}}\, ds\bigg)^{-1+\frac{n-d}m }\, dt\, dr\bigg)^{\frac m{n-d}} \leq C_1 \|f\|_{X(0,1)}
\end{align}
for $f \in\Mpl(0,1)$. On the other hand,
\begin{align}\label{gen38}
\bigg(\int _0^1& f^*(r) r^{-1+\frac mn}\int _0^{r^{\frac dn}}t^{-1+\frac m{n-d}}\bigg(\int _0^{t^{\frac nd}}f^*(s)s^{-1+\frac m{n-d}}\, ds\bigg)^{-1+\frac{n-d}m}\, dt\, dr\bigg)^{\frac m{n-d}}
\\ \nonumber & \geq \bigg(\int _0^1 f^*(r) r^{-1+\frac mn}\int _ {(r/2)^{\frac dn}}^{r^{\frac dn}}t^{-1+\frac m{n-d}}\bigg(\int _0^{t^{\frac nd}}f^*(s)s^{-1+\frac m{n-d}}\, ds\bigg)^{-1+\frac{n-d}m}\, dt\, dr\bigg)^{\frac m{n-d}}
\\ \nonumber & \geq \bigg(\int _0^1 f^*(r) r^{-1+\frac m{n}-\frac dn +\frac{dm}{n(n-d)}}\big(r^{\frac  dn}  - (r/2)^{\frac dn}\big) \bigg(\int _0^{r/2}f^*(s)s^{-1+\frac m{n-d}}\, ds\bigg)^{-1+\frac{n-d}m}\, dr\bigg)^{\frac m{n-d}}
\\ \nonumber & \approx \bigg(\int _0^1 f^*(r) r^{-1+\frac m{n-d}}  \bigg(\int _0^{r/2}f^*(s)s^{-1+\frac m{n-d}}\, ds\bigg)^{-1+\frac{n-d}m}\, dr\bigg)^{\frac m{n-d}}
  \approx \|f\|_{L^{\frac{n-d}m,1}(0,1)}.
\end{align}
 Throughout this proof, the relations $\lq\lq \lesssim "$ and $\lq\lq \approx "$ hold up to constants depending on $n,m,d$, $\mu$ and $\o$.
Coupling inequalities \eqref{gen37} and \eqref{gen38} tells us that $X(0,1) \to L^{\frac{n-d}m,1}(0,1)$. Thus, $W^mX(\o) \to W^m L^{\frac{n-d}m,1}(\o)$, and hence $T_\mu$ is well defined by Theorem \ref{L:endpointsub}.
\\
Our point of departure in the proof of inequality \eqref{E:sobolev-alternate-3} is equation~\eqref{E:pointwise}.
Define the operator $P$  as
\begin{equation}\label{P}
Pf(t)= t\sp{-\frac{m}{n-d}}\int_0\sp t s\sp{-1+\frac{m}{n-d}}f(s)\,ds \qquad \hbox{for $t \in (0,1)$,}
\end{equation}
for $f \in \Mpl(0,1)$. Let $S_1$ and $S_2$ be the operators defined as in \eqref{S12}.
By Fubini's theorem, one can verify that
\begin{equation}\label{E:fubini-1}
P S_1 f(t)=\frac{n-d}{m}(S_1+S_2)f(t) \quad\text{for  $t\in(0,1)$,}
\end{equation}
for every $f\in\M_+(0,1)$.
Now, fix $u\in W^mX(\Omega)$ and set $g(t)=|D^mu|^*(|\Omega|t)$ for $t\in (0,1)$. A combination of equations ~\eqref{E:pointwise} and~\eqref{E:fubini-1} yields
\begin{equation}\label{E:average-right}
\left(\frac{1}{t}\int_0^{ct} u^*_\mu(\mu(\overline{\Omega})s)^{\frac{n-d}{m}}\,ds\right)^{\frac{m}{n-d}}
\lesssim
P S_1 g(t) \quad\text{for  $t\in(0,1)$,}
\end{equation}
for some constant $c$ independent of $u$.
Next, define the function $h\in\Mpl(0,1)$ by
\begin{equation}\label{E:g}
h(t)=\kappa(P S_1 g(t))\sp{1-\frac{m}{n-d}} (S_1 g(t))\sp{\frac{m}{n-d}} \quad\text{for $t\in(0,1)$},
\end{equation}
where $\kappa=(\frac{n-d}{m})\sp{\frac{m}{n-d}}$. Since the function on the right-hand side of equation \eqref{E:g} is non-increasing,  we have that $h=h^*$.
Raising both sides of   equality  \eqref{E:g} to the power $\frac{n-d}{m}$ and integrating the resultant equality yield
\[
\int_0\sp{t}h (s)\sp{\frac{n-d}{m}}\,ds
=
\left(\int_0\sp{t}
s\sp{-1+\frac{m}{n-d}}
\int_{s\sp{\frac{n}{d}}}\sp{1}g(r)r\sp{-1+\frac{m}{n}}\,dr\,ds\right)\sp{\frac{n-d}{m}} \quad \text{for $t\in(0,1)$}.
\]
The latter equality can be rewritten in the form
\begin{equation}\label{E:principle}
(h\sp{\frac{n-d}{m}})\sp{**}(t)\sp{\frac{m}{n-d}}=P S_1g(t) \quad\text{for $t\in(0,1)$}.
\end{equation}
Coupling equation
\eqref{E:average-right} with~\eqref{E:principle} tells us that
\[
\left(\frac{1}{t}\int_0^{ct} u^*_\mu(\mu(\overline{\Omega})s)^{\frac{n-d}{m}}\,ds\right)^{\frac{m}{n-d}}
\lesssim
(h\sp{\frac{n-d}{m}})\sp{**}(t)\sp{\frac{m}{n-d}} \quad\text{for $t\in(0,1)$},
\]
or, equivalently,
\begin{equation}\label{E:u-g-int}
\int_0\sp{ct}u\sp*_{\mu}(\mu(\overline{\Omega})s)\sp{\frac{n-d}{m}}\,ds
\lesssim
\int_0\sp{t}h(s)\sp{\frac{n-d}{m}}\,ds \quad\text{for $t\in(0,1)$}.
\end{equation}
Owing to inequality~\eqref{E:u-g-int}, property~\eqref{E:HLP} and the definition of the functional $\|\cdot\|_{Z^{\{\frac{n-d}{m}\}}(0,1)}$,
\[
\|u\sp*_{\mu}(\mu(\overline{\Omega})t)\|_{Z^{\{\frac{n-d}{m}\}}(0,1)}
\lesssim
\|h\|_{Z^{\{\frac{n-d}{m}\}}(0,1)}.
\]
By equation~\eqref{E:g}, this means that
\begin{equation}\label{gen30}
\|u\sp*_{\mu}(\mu(\overline{\Omega})t)\|_{Z^{\{\frac{n-d}{m}\}}(0,1)}
\lesssim
\|(P S_1g)\sp{1-\frac{m}{n-d}} (S_1 g)\sp{\frac{m}{n-d}}\|_{Z^{\{\frac{n-d}{m}\}}(0,1)}.
\end{equation}
Thanks to equation ~\eqref{E:fubini-1} again,
\begin{align}\label{gen31}
(P S_1 g(t))\sp{1-\frac{m}{n-d}} (S_1g(t))\sp{\frac{m}{n-d}}
&\approx
((S_1+S_2)g(t))\sp{1-\frac{m}{n-d}}(S_1g(t))\sp{\frac{m}{n-d}} \approx
S_1g(t)+(S_1g(t))\sp{\frac{m}{n-d}}(S_2g(t))\sp{1-\frac{m}{n-d}}
\end{align}
for $t\in(0,1)$. Assumptions \eqref{E:assumptionsub1} and \eqref{E:assumptionsub2}, combined with equations \eqref{gen30}, \eqref{gen31} and \eqref{gen21}, imply that
\begin{align*}
\|u \|_{Z^{\{\frac{n-d}{m}\}}(\overline \o, \mu)} & \approx \|u\sp*_{\mu}(\mu(\overline{\Omega})t)\|_{Z^{\{\frac{n-d}{m}\}}(0,1)} \lesssim
\|S_1g+(S_1g)\sp{\frac{m}{n-d}}(S_2g)\sp{1-\frac{m}{n-d}}\|_{Z^{\{\frac{n-d}{m}\}}(0,1)}\\ &  \lesssim
\| |D\sp mu|\sp*\|_{X(0,1)} \approx \| D^m u\|_{X(\o)} \approx \|u\|_{W^mX(\o)}.
\end{align*}
Hence, inequality  \eqref{E:sobolev-alternate-3} follows.
\end{proof}

\begin{proof}[Proof of Theorem \ref{T:radial}]
We begin by showing that  functional $\|\cdot\|_{Z(0,1)}$, defined by
\begin{equation}\label{E:Z}
\|f\|_{Z(0,1)}=\|f^*(t\sp{\frac nd})\|_{Y(0,1)}
\end{equation}
for $f \in \Mpl(0,1)$,
is a rearrangement-invariant function norm. In order to prove the triangle
inequality, fix any  $f,g \in \Mpl(0,1)$. The following chain holds:
\begin{align*}
\|f+g\|_{Z(0,1)}
&=\|(f+g)^*(t\sp{\frac{n}{d}})\|_{Y(0,1)}
=\sup_{\|h\|_{Y'(0,1)}\leq 1} \int_0^1 (f+g)^*(t\sp{\frac{n}{d}}) h^*(t)\,dt\\
&= {\frac{d}{n}}  \sup_{\|h\|_{Y'(0,1)}\leq 1} \int_0^1 (f+g)^*(t) h^*(t\sp{\frac{d}{n}}) t\sp{-1+\frac{d}{n}}\,dt\\
&\leq  {\frac{d}{n}} \sup_{\|h\|_{Y'(0,1)}\leq 1} \int_0^1 f^*(t) h^*(t\sp{\frac{d}{n}}) t\sp{-1+\frac{d}{n}}\,dt +  {\frac{d}{n}} \sup_{\|h\|_{Y'(0,1)}\leq 1} \int_0^1 g^*(t) h^*(t\sp{\frac{d}{n}}) t\sp{-1+\frac{d}{n}}\,dt\\
&= \sup_{\|h\|_{Y'(0,1)}\leq 1} \int_0^1 f^*(t\sp{\frac{n}{d}}) h^*(t)\,dt
+\sup_{\|h\|_{Y'(0,1)}\leq 1} \int_0^1 g^*(t\sp{\frac{n}{d}}) h^*(t)\,dt\\
&=\|f^*(t\sp{\frac{n}{d}})\|_{Y(0,1)} +\|g^*(t\sp{\frac{n}{d}})\|_{Y(0,1)}
=\|f\|_{Z(0,1)} + \|g\|_{Z(0,1)}.
\end{align*}
Note that
 the second equality is due to equation \eqref{X''}, and the inequality   to the fact that the functional
$$
f \mapsto \int_0^1 f^*(t) h^*(t\sp{\frac{d}{n}}) t\sp{-1+\frac{d}{n}}\,dt
$$
is subadditive for any fixed $h\in \mathcal M_+(0,1)$. The latter property is in its turn a consequence of properties \eqref{subadd} and \eqref{E:hardy-lemma}.
%
The remaining properties in \textup{(P1)}, as well as properties \textup{(P2)}, \textup{(P3)}, \textup{(P4)} and \textup{(P6)} of the definition of a rearrangement-invariant function norm, follow easily from the definition of $\|\cdot\|_{Z(0,1)}$. Finally, property  \textup{(P5)} holds since
%
%
$$
\int_0^1 f^*(t)\,dt \lesssim \|f^*\|_{Y(0,1)} \leq \|f^*(t\sp{\frac{n}{d}})\|_{Y(0,1)} =\|f\|_{Z(0,1)}
$$
for $f \in \Mpl(0,1)$, where the first inequality is due to property  \textup{(P5)} for the function norm $ \|\cdot\|_{Y(0,1)}$, and the second one to the inequality $t^{\frac nd} \leq t$ for $t \in (0,1)$.
Throughout this proof, the constants involved in the relations $\lq\lq \approx "$ and $\lq\lq \lesssim "$, as well as  all other constants, depend on $n,m,d$, $\mu$ and $\o$.
The fact that
 $\|\cdot\|_{Z(0,1)}$ is a rearrangement-invariant function norm is thus established.
\\ From now on, we focus on part (iii), the proof of parts (i) and (ii) being analogous, and even simpler. We claim that
\begin{equation}\label{E:weighted_embedding}
\|u\|_{Y\left(\overline \Omega,\mu\right)}\lesssim\|u\|_{Z(\Omega)}
\end{equation}
for every $u \in Z(\Omega)$.
To prove equation~\eqref{E:weighted_embedding}, first notice that
\begin{equation}\label{E:measure-estimate}
\mu(E)\lesssim|E|\sp{\frac{d}{n}}
\end{equation}
for every Borel set   $E\subset\o$. Indeed, on denoting by $B$ a ball
 centered at the point $x_0$ and satisfying $|E|=|B|$, one has that
\[
\mu(E)
=
\int_Eg(|x-x_0|)\,dx
\leq
\int_Bg(|x-x_0|)\,dx
\lesssim
\int_{B\cap \overline \o}g(|x-x_0|)\,dx
\lesssim |B\cap \overline \o|^{\frac dn}
\leq |B|\sp{\frac{d}{n}}
=|E|\sp{\frac{d}{n}},
\]
where the first inequality holds by the Hardy-Littlewood inequality \eqref{E:HL} and the monotonicity of the function $g$, the second one since $x_0\in \overline \o$ and $\o$ is a Lipschitz domain, and the third one by assumption \eqref{E:supbar}.
 Hence,
\begin{align*}
\mu(\{x\in \Omega: |u(x)|>\varrho\})
\lesssim |\{x\in \Omega: |u(x)|>\varrho\}|\sp{\frac{d}{n}} \qquad \hbox{for $\varrho \in (0, \infty)$,}
\end{align*}
for $u \in \Mpl(\overline \o, \mu)$.
Thus,
\begin{align}\label{E:rearrangement}
u^*_\mu(t)
\leq u^*(c t\sp{\frac{n}{d}}) \qquad \hbox{for $t \in (0, \infty)$,}
\end{align}
for some constant $c$ and for every $u \in \Mpl(\overline \o, \mu)$. From inequality
 \eqref{E:rearrangement} and the boundedness of the dilation operator on rearrangement-invariant spaces we deduce  that
\begin{align*}
\|u\|_{Y(\overline \Omega,\mu)}
&=\|u^*_\mu(\mu(\Omega)t)\|_{Y(0,1)}
\leq \| u^*(c (\mu(\Omega) t)\sp{\frac{n}{d}})\|_{Y(0,1)}
\lesssim \|u^*\|_{Z(0,1)} \lesssim\|u^*(|\Omega|t)\|_{Z(0,1)}
=\|u\|_{Z(\Omega)}.
\end{align*}
This establishes~\eqref{E:weighted_embedding}.
\\
Finally, by the very definition of the function norm $ \|\cdot \|_{Z(0,1)}$,
\begin{equation}\label{E:comparison-of-norms}
\left\|\int_{t}^1 f(s) s^{-1+\frac{m}{n}}\,ds\right\|_{Z(0,1)}
=\left\|\int_{t\sp{\frac{n}{d}}}^1 f(s) s^{-1+\frac{m}{n}}\,ds\right\|_{Y(0,1)}
\end{equation}
for $f\in\Mpl(0,1)$.
Owing to assumption \eqref{E:assumption-radial} and Theorem \ref{T:sufficiency} applied to the case when $\mu$ is Lebesgue measure, and hence $d=n$,
\begin{equation}\label{E:KP}
\|u\|_{Z(\Omega)} \lesssim \|u\|_{W^mX(\Omega)}
\end{equation}
for every $u \in W^mX(\o)$. Coupling inequalities \eqref{E:weighted_embedding}  and~\eqref{E:KP} yields  embedding~\eqref{E:radial-general-bar}.
\\ The necessity of condition \eqref{E:assumption-radial}
 under assumption \eqref{E:infbar} holds (for any $d$-Frostman measure and any $d\in (0, n]$) by
 Theorem~\ref{T:necessity}.
\\ With  the sufficiency and necessity of condition \eqref{E:assumption-radial} at disposal, the assertions about  the function norm $\|\cdot\|_{X_{n,d}^m(0,1)}$ follow via the same argument as in the proof of Theorem \ref{T:optimal}.
\end{proof}

\begin{proof}[Proof of Proposition
\ref{P:radial-improvement}] Owing to Theorem \ref{T:radial}, it suffices to observe that, if $X(0,1)= L^{\frac{n-d}m,1}(0,1)$, then $X_{n,d}^m(0,1)= L^{\frac{n-d}m,1}(0,1)$ as well, up to equivalent norms. This equality is a special case of the last but one equation in the proof of \cite[Theorem 5.1]{CP-Trans}.
\end{proof}

\section{Optimal embeddings for classical Sobolev spaces}\label{S:last}

In this final section    results established in the previous sections are exploited  to prove an embedding for the standard Sobolev space $W^{m,p}_0(\o)$, or $W^{m,p}(\o)$, into an optimal rearrangement-invariant target space with respect to a $d$-Frostman measure. This is the content of the next theorem, which enhances classical embeddings for measures by Adams \cite{Adams2, Adams3} for $p>1$, and Maz'ya \cite{Mazya543, Mazya548} for $p=1$, and carries over to the case of Frostman measures  results by O'Neil \cite{Oneil} and Peetre \cite{Peetre} for $1\leq p < \frac nm$ and by Br\'ezis-Wainger \cite{BW} for $p= \frac nm$. A related result, under  assumptions of a different nature on the measure, can be found in \cite{costea-mazya}.
%
%

\begin{theorem}{\rm{\bf [Optimal range space for classical Sobolev spaces]}}\label{T:main-example} Let $\Omega$ be a   domain with finite measure in $\Rn$, $n\geq 2$, and let $m\in \N$  with
$m<n$. Assume that either $d\in[n-m,n]$ and $p\in[1,\infty]$, or $d\in(0,n-m)$ and $p\in(\frac{n-d}{m},\infty]$.
\\
\textup{(}i\textup{)} Assume that $\mu$ is a finite Borel measure on ${\Omega}$ fulfilling \eqref{E:sup}. Then
\begin{equation}\label{E:optimal-leb-emb-i}
W^{m,p}_0(\Omega) \to
\begin{cases}
    L\sp {\frac{dp}{n-mp},p}(\Omega, \mu) &\text{if $p<\frac{n}{m}$}\\
    L\sp {\infty,p,-1}(\Omega, \mu) &\text{if $p=\frac{n}{m}$}\\
    L\sp {\infty}(\Omega, \mu) &\text{if $p>\frac{n}{m}$}.
  \end{cases}
\end{equation}
\\
\textup{(}ii\textup{)} Assume, in addition, that $\Omega$ has the cone property. Let
$\mu$ be a Borel measure on ${\Omega}$ fulfilling \eqref{E:sup}.
Then embedding~\eqref{E:optimal-leb-emb-i} holds with $W^{m,p}_0(\Omega)$ replaced by $W^{m,p}(\Omega)$.
\\
\textup{(}iii\textup{)} Assume that $\Omega$ is a bounded Lipschitz domain.  Let $\mu$ be a Borel measure on $\overline{\Omega}$ fulfilling \eqref{E:supbar}.
Then embedding~\eqref{E:optimal-leb-emb-i} holds with $W^{m,p}_0(\Omega)$ replaced by $W^{m,p}(\Omega)$ and $\Omega$ replaced by $\overline{\Omega}$ on the right-hand side.
\\
Moreover, if~\eqref{E:inf} is satisfied, then the target spaces in \textup{(}i\textup{)} and \textup{(}ii\textup{)}
 are optimal among all rearrangement-invariant spaces. If~\eqref{E:infbar} is satisfied, then the target spaces   in \textup{(}iii\textup{)} are optimal among all rearrangement-invariant spaces.
\end{theorem}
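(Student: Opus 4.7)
The plan is to specialize the reduction principles of Sections~\ref{high} and~\ref{low} to the case $X(\Omega)=L^p(\Omega)$, and then to identify the resulting optimal target among rearrangement-invariant spaces by direct one-dimensional computation. Parts (i), (ii), (iii) of the statement differ only in the regularity of $\Omega$ and in whether the measure lives on $\Omega$ or on $\overline\Omega$, while the reduction principles apply uniformly across all three settings; the argument therefore reduces to identifying the one-dimensional target function norm that corresponds to the Lebesgue norm $L^p$ on the domain side.

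\textbf{Fast-decay range $d\in[n-m,n]$.} I would invoke Theorem~\ref{T:optimal} with $X=L^p$, which immediately yields the embedding into the optimal target $X^m_{d,n}$ whose associate norm is given by \eqref{E:trace_opt_norm}, namely
\[
\|f\|_{(X^m_{d,n})'(0,1)}=\left\|t\sp{-1+\frac{m}{n}}\int _0^{t^{d/n}} f\sp{*}(s)\,ds\right\|_{L^{p'}(0,1)}.
\]
The substitution $s=u^{n/d}$ transfers the power weight past the inner integral, and the equivalence between $f^{**}$-type and $f^{*}$-type Lorentz norms granted by \eqref{iv}, \eqref{E:lorentz-assoc} then produces $X^m_{d,n}(0,1)=L^{q,p}(0,1)$ with $q=dp/(n-mp)$ in the subcritical range $p<n/m$. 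At the borderline $p=n/m$, the power weight degenerates into a logarithmic one and \eqref{v} identifies the optimal target as $L^{\infty,p,-1}$. When $p>n/m$ the inner integral is bounded pointwise via H\"older's inequality, producing $L^\infty$. Sharpness in this range is automatic from the optimality clause of Theorem~\ref{T:optimal}.

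\textbf{Slow-decay range $d\in(0,n-m)$ with $p>(n-d)/m$.} The case $p>n/m$ is settled by the classical continuous embedding $W^{m,p}(\Omega)\hookrightarrow L^\infty(\Omega)\hookrightarrow L^\infty(\Omega,\mu)$. In the remaining range $(n-d)/m<p\leq n/m$, I plan to apply Theorem~\ref{T:product-of-operators} with an auxiliary norm $\|\cdot\|_{Z(0,1)}$ chosen so that $Z^{\{(n-d)/m\}}(0,1)$ coincides with the Lorentz, respectively Lorentz--Zygmund, target claimed in \eqref{E:optimal-leb-emb-i}. The two Hardy conditions \eqref{E:assumptionsub1} and \eqref{E:assumptionsub2} can then be verified by direct computation; the strict inequality $p>(n-d)/m$ is precisely what ensures that $f^*(s)s^{-1+m/(n-d)}$ is integrable at $0$ for $f\in L^p(0,1)$, making the integrand of \eqref{E:assumptionsub2} well defined. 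Sharpness again follows from Theorem~\ref{T:necessity}: any rearrangement-invariant target $Z$ accommodating the embedding must satisfy the Hardy inequality \eqref{E:assumption} with $X=L^p$ and $Y=Z$, and the duality argument from the proof of Theorem~\ref{T:optimal} then forces $Z$ to contain the explicit target identified above.

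\textbf{Main obstacle.} The substantive difficulty lies in the slow-decay regime. As the comparison between \eqref{gen62} and \eqref{gen63} highlights, the conditions \eqref{E:sup} and \eqref{E:inf} alone do \emph{not} determine the optimal target for a general rearrangement-invariant domain norm, so the fact that optimality nevertheless holds for $X=L^p$ whenever $p>(n-d)/m$ must encode a structural feature of the $L^p$-scale. The key point I expect to have to establish is that, in this scale, the auxiliary Hardy inequality \eqref{E:assumptionsub2} contributes no information beyond \eqref{E:assumptionsub1}, so that the combined reduction principle of Theorem~\ref{T:product-of-operators} collapses to a single condition coinciding with the necessary condition \eqref{E:assumption}. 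Making this collapse precise at the logarithmic borderline $p=n/m$, and explicitly identifying the resulting Lorentz--Zygmund target $L^{\infty,p,-1}$, constitutes the technical heart of the argument.
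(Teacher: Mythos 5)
Your outline captures the correct high-level architecture of the proof: specialize the reduction principles to $X=L^p$, identify the one-dimensional target by duality, and obtain optimality from Theorem~\ref{T:necessity}. In the fast-decay range $d\in[n-m,n]$ your route coincides with the paper's, which applies Theorem~\ref{T:optimal} to $X=L^p$ and computes $(L^p)^m_{d,n}$ via the identity $\|f\|_{((L^p)^m_{d,n})'(0,1)}=\|f\|_{L^{(\frac{dp}{dp-(n-mp)},p')}(0,1)}$ together with \eqref{E:lorentz-assoc}, \eqref{iv} and \eqref{v}; this part is fine.

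In the slow-decay range, however, there is a genuine gap. You propose invoking Theorem~\ref{T:product-of-operators}, whereas the paper applies Theorem~\ref{T:sufficiency-sub-main} (the version with the two \emph{separate} Hardy conditions \eqref{E:two-operators1} and \eqref{E:two-operators2}), taking $Y$ equal to the claimed target directly and then weakening $Y^{\langle (n-d)/m\rangle}$ to $Y$. Either reduction principle can in principle be made to work, since by Young's inequality the geometric-mean condition \eqref{E:assumptionsub2} follows from the pair of separate conditions on $S_1$ and $S_2$. But the crux of the argument — which your proposal misdiagnoses — is precisely establishing the \emph{second} estimate, namely $S_2\colon L^p(0,1)\to L^{\frac{dp}{n-mp},p}(0,1)$ for $p\in(\tfrac{n-d}{m},\tfrac{n}{m})$ and $S_2\colon L^p(0,1)\to L^{\infty,p;-1}(0,1)$ for $p=\tfrac{n}{m}$. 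Your "main obstacle" paragraph asserts that \eqref{E:assumptionsub2} "contributes no information beyond \eqref{E:assumptionsub1}" and that the reduction principle "collapses to a single condition"; this is not correct and not where the actual work lies. The paper's argument does \emph{not} show the second condition is redundant — it proves it. Concretely, the paper exploits the identity $PS_1=\tfrac{n-d}{m}(S_1+S_2)$ from \eqref{E:fubini-1}: in the subcritical range the boundedness of the Hardy-type operator $P$ on $L^{\frac{dp}{n-mp},p}(0,1)$ (via the cited results from Bennett--Sharpley) upgrades the $S_1$ estimate to one for $S_2$, and at the borderline $p=\tfrac{n}{m}$ a separate direct computation shows $S_2\colon L^{\frac nm,\infty}(0,1)\to L^\infty(0,1)$, which then interpolates to the Lorentz--Zygmund target. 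None of this is present in your sketch, and "verified by direct computation" does not substitute for it. Without the $S_2$ estimate the sufficiency half of the theorem in the range $d\in(0,n-m)$ is unproved; supplying it is the technical heart that you correctly flag but do not address.

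A minor further remark: your observation that $p>\tfrac{n-d}{m}$ ensures integrability of $f^*(s)s^{-1+m/(n-d)}$ near $0$ is relevant only to well-posedness of the trace operator (it yields $W^mL^p\to W^mL^{\frac{n-d}{m},1}$), not to the boundedness of $S_2$; conflating these two roles of the condition $p>\tfrac{n-d}{m}$ adds to the confusion in the "main obstacle" paragraph. The optimality argument you sketch (necessity of the Hardy inequality via Theorem~\ref{T:necessity}, then duality on $S_1'$ exactly as in the proof of Theorem~\ref{T:optimal}) matches the paper.
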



\begin{proof}
Assume first that $d\in[n-m,n]$, and   fix $p\in[1,\infty]$. By Theorem \ref{T:optimal}, embedding \eqref{E:optimal-leb-emb-i} will follow if we show that
\begin{equation}\label{gen40}
(L^p)_{n,d}^m(0,1)=
\begin{cases}
    L\sp {\frac{dp}{n-mp},p}(0,1) &\text{if $p<\frac{n}{m}$}\\
    L\sp {\infty,p,-1}(0,1) &\text{if $p=\frac{n}{m}$}\\
    L\sp {\infty}(0,1) &\text{if $p>\frac{n}{m}$},
  \end{cases}
\end{equation}
up to equivalent norms,
where $(L^p)_{n,d}^m(0,1)$ denotes the space associated with $L^p(0,1)$ as in \eqref{E:trace_opt_norm}. If $p\in[1,\frac{n}{m}]$, then
%
%
\begin{align}\label{E:S1-char}
 \|f\|_{((L^{p})^{m}_{d,n})'(0,1)}
    =\bigg\|t\sp{-1+\frac mn}\int _0^{t^{\frac d n}} f\sp{*}(s)ds\bigg\|_{L^{p'}(0,1)}
= \|f\|_{L^{(\frac{dp}{dp-(n-mp)},p')}(0,1)}.
\end{align}
Hence, the cases when $1 \leq p<\frac{n}{m}$ and $p=\frac{n}{m}$ in \eqref{gen40} follow via
equations~\eqref{E:lorentz-assoc} and~\eqref{v}.
If $p>\frac{n}{m}$, then
\begin{equation}\label{gen43}
  \|f\|_{((L^{p})^{m}_{d,n})'(0,1)}=\bigg\|t\sp{-1+\frac mn}\int _0^{t^{\frac d n}} f\sp{*}(s)ds\bigg\|_{L^{p'}(0,1)}
  \leq
  \|f\|_{L^{1}(0,1)}\Big\|t\sp{-1+\frac{m}{n}}\Big\|_{L^{p'}(0,1)}
  \approx \|f\|_{L^{1}(0,1)},
\end{equation}
whence $L^{1}(0,1)\to((L^{p})^{m}_{d,n})'(0,1)$, where the relation $\lq\lq \approx "$ holds up to constants depending on $n,m,d,p$. Owing to~\eqref{l1linf}, this implies that $L^{1}(0,1)=((L^{p})^{m}_{d,n})'(0,1)$, whence, by \eqref{X''},
 $(L^{p})^{m}_{d,n}(0,1)=L^{\infty}(0,1)$. The case when $p>\frac nm$ in \eqref{gen40} is thus also settled.
\\ Assume now that
 $d\in(0,n-m)$. Let $S_1$ and $S_2$ be the operators defined by \eqref{S12}. Observe that
$$S_1' f(t) = t\sp{-1+\frac mn}\int _0^{t^{\frac d n}} f(s)ds \quad \hbox{for $t \in (0,1)$,}$$
for $f \in \Mpl(0,1)$.
If $p\in[1,\frac{n}{m}]$,  then the second equality in \eqref{E:S1-char} and inequality~\eqref{E:HL}  tell us that
\begin{equation}\label{gen42}
S_1' : L^{(\frac{dp}{dp-(n-mp)},p')}(0,1) \to L^{p'}(0,1).
\end{equation}
If $p > \frac nm$, then equation \eqref{gen43} yields
\begin{equation}\label{gen44}
S_1' : L^1(0,1) \to L^{p'}(0,1).
\end{equation}
From equations \eqref{gen42} and \eqref{gen44} we infer, via
 \eqref{E:novabis}, \eqref{iv},~\eqref{E:lorentz-assoc} and~\eqref{v} that
\begin{equation}\label{E:S-1-bded}
S_1\colon L^{p}(0,1)\to
    \begin{cases}
        L^{\frac{dp}{n-mp},p}(0,1) &\text{if $p\in[1,\frac{n}{m})$},
             \\
        L^{\infty,p;-1}(0,1) &\text{if $p=\frac{n}{m}$},
            \\
            L^{\infty}(0,1) &\text{if $p\in(\frac{n}{m},\infty]$}.
    \end{cases}
\end{equation}
Let $p\in(\frac{n-d}{m},\frac{n}{m})$, and let $P$ be the operator defined by \eqref{P}. By~\cite[Theorem~3.5.15, Lemma~4.4.5 and Theorem~4.4.6]{BS},
\begin{equation*}
P \colon L^{\frac{dp}{n-mp},p}(0,1)\to L^{\frac{dp}{n-mp},p}(0,1),
\end{equation*}
and therefore, by \eqref{E:S-1-bded} and \eqref{E:fubini-1},
\begin{equation*}
S_2\colon L^{p}(0,1)\to L^{\frac{dp}{n-mp},p}(0,1).
\end{equation*}
Next, let $p\geq\frac{n}{m}$. Note that
\begin{align*}
\|S_2f\|_{L\sp{\infty}(0,1)}
&\leq
\|S_2f\sp*\|_{L\sp{\infty}(0,1)}
=
\sup_{t \in (0,1)}t\sp{-\frac{m}{n-d}}\int_0\sp{t\sp{\frac{n}{d}}}
f\sp*(s)s\sp{\frac{m}{n}}s\sp{-1-\frac{m}{n}+\frac{m}{n-d}}\,ds\\
&\leq
\sup_{s\in (0,1)}f\sp*(s)s\sp{\frac{m}{n}}
\sup_{t \in (0,1)}t\sp{-\frac{m}{n-d}}\int_0\sp{t\sp{\frac{n}{d}}}
s\sp{-1-\frac{m}{n}+\frac{m}{n-d}}\,ds\approx\|f\|_{L\sp{\frac{n}{m},\infty}(0,1)},
\end{align*}
whence, $S_2\colon L\sp{\frac{n}{m},\infty}(0,1)\to L\sp{\infty}(0,1)$. Combining this piece of information with the embeddings $L\sp{\infty}(0,1)\to L^{\infty,p;-1}(0,1)$, which holds by~\eqref{l1linf}, and $L\sp{\frac{n}{m}}(0,1)\to L\sp{\frac{n}{m},\infty}(0,1)$, which holds  by~\eqref{ii}, tells us that $S_2\colon L\sp{\frac{n}{m}}(0,1)\to L^{\infty,p;-1}(0,1)$. When $p>\frac{n}{m}$, then obviously $L^{p}(0,1)\to L^{\frac{n}{m}}(0,1)\to L^{\frac{n}{m},\infty}(0,1)$, and consequently $S_2\colon L\sp{p}(0,1)\to L^{\infty}(0,1)$.
Altogether, we have shown that
\begin{equation}\label{E:S-2-bded}
S_2\colon L^{p}(0,1)\to
    \begin{cases}
        L^{\frac{dp}{n-mp},p}(0,1) &\text{if $p\in(\frac{n-d}{m},\frac{n}{m})$}
             \\
        L^{\infty,p;-1}(0,1) &\text{if $p=\frac{n}{m}$}
            \\
            L^{\infty}(0,1) &\text{if $p\in(\frac{n}{m},\infty]$}.
    \end{cases}
\end{equation}
Embedding \eqref{E:optimal-leb-emb-i}, and the parallel embeddings stated in parts (ii) and (iii),
follow from Theorem~\ref{T:sufficiency-sub-main}, via equations~\eqref{E:S-1-bded} and~\eqref{E:S-2-bded}.
\\  It remains to establish the optimality of the target spaces in the relevant embeddings under the additional assumption \eqref{E:inf} or \eqref{E:infbar}. To this purpose,
suppose that $W\sp{m,p}_0(\Omega)\to Z(\Omega, \mu)$ for some rearrangement-invariant function norm $\|\cdot\|_{Z(0,1)}$. Assume that $p \in [1,  \frac{n}{m}]$ and, if $d \in (0,n-m)$, also  $p>\frac{n-d}{m}$.
 By Theorem~\ref{T:necessity}, $S_1\colon L^{p}(0,1)\to Z(0,1)$, whence,
by~\eqref{E:novabis}, $S_1'\colon Z'(0,1)\to L^{p'}(0,1)$. On the other hand, equation~\eqref{E:S1-char} tells us that $\|S_1'f^{*}\|_{L^{p'}(0,1)}=\|f\| _{L^{(\frac{dp}{dp-(n-mp)},p')}(0,1)}$, and hence $Z'(0,1)\to L^{(\frac{dp}{dp-(n-mp)},p')}(0,1)$. By~\eqref{emb}, the latter embedding yields $L^{\frac{dp}{n-mp},p}(0,1)\to Z(0,1)$ if $p\in(\frac{n-d}{m},\frac{n}{m})$, and $L^{\infty,p;-1}(0,1)\to Z(0,1)$ if $p=\frac{n}{m}$,  thus proving the optimality of the target space in both  cases. Finally, when $p>\frac{n}{m}$, the target space is trivially optimal, since, by~\eqref{l1linf}, $L^{\infty}(0,1)\to Z(0,1)$.
\end{proof}

\section*{Compliance with Ethical Standards}\label{conflicts}

\smallskip
\par\noindent
{\bf Funding}. This research was partly funded by:
\\ (i)
Italian Ministry of University and Research (MIUR), Research Project Prin 2015 \lq\lq  Partial differential equations and related analytic-geometric inequalities"  (grant number 2015HY8JCC);
\\ (ii) GNAMPA   of the Italian INdAM - National Institute of High Mathematics (grant number not available);
\\  (iii) Grant Agency of the Czech Republic (grant numbers
P201-13-14743S and P201-18-00580S);
\\  (iv) Czech Ministry of Education (grant number 8X17028);
\\ (v) Charles University (project GA UK No. 62315).

\smallskip
\par\noindent
{\bf Conflict of Interest}. The authors declare that they have no conflict of interest.


\begin{thebibliography}{999}
%

\bibitem{Adams2}
\by{\name{D.R.}{Adams}}
\paper{Traces of potentials arising from translation invariant operators}
\jour{Ann. Sc. Norm. Super. Pisa}
\vol{25}
\yr{1971}
\pages{203--217}
\endpaper

\bibitem{Adams3}
\by{\name{D.R.}{Adams}}
\paper{A trace inequality for generalized potentials}
\jour{Stud. Math.}
\vol{48}
\yr{1973}
\pages{99--105}
\endpaper


\bibitem{ACPS}
\by{\name{A.}{Alberico}, \name{A.}{Cianchi}, \name{L.}{Pick}\et\name{L.}{Slav\'{\i}kov\'a}}
\paper{Sharp Sobolev type embeddings on the entire Euclidean space}
\jour{Comm. Pure Appl. Anal.}
\vol{17, 5}
\yr{2018}
\pages{2011--2037}
\endpaper

\bibitem{ABCMP}
\by{\name{A.}{Alvino}, \name{F.}{Brock},  \name{F.}{Chiacchio}, \name{A.}{Mercaldo}\et\name{M.R.}{Posteraro}}
\paper{Some isoperimetric inequalities on $\mathbb R^N$ with respect to weights $|x|^\alpha$}
\jour{J. Math. Anal. Appl. }
\vol{451}
\yr{2017}
\pages{280--318}
\endpaper

%
%

\bibitem{Aubin}
\by{\name{T.}{Aubin}} \paper{Probl\`emes isop\'erimetriques et
espaces de Sobolev} \jour{J. Diff. Geom.} \vol{11} \yr{1976}
\pages{573--598}
\endpaper

\bibitem{BS}
\by{\name{C.}{Bennett}\et\name{R.}{Sharpley}}
\book{Interpolation of Operators}
\publ{Pure and Applied Mathematics Vol.\ 129, Academic Press,
Boston 1988}
\endbook

\bibitem{BW}
\by{\name{H.}{Br\'ezis}\et\name{S.}{Wainger}}
\paper{A~note on limiting cases of Sobolev embeddings and
convolution inequalities}
\jour{Comm.\ Partial Diff.\ Eq.}
\vol{5}
\yr{1980}
\pages{773--789}
\endpaper

\bibitem{CPSS}
\by{\name{M.}{Carro}, \name{L.}{Pick}, \name{J.}{Soria}\et\name{V.}{Stepanov}}
\paper{On embeddings between classical Lorentz spaces}
\jour{Math.\ Ineq.\ Appl}
\vol{4}
\yr{2001}
\pages{397--428}
\endpaper


\bibitem{CW}
\by{\name{F.}{Catrina}  \et\name{Z.-Q.}{Wang}}
\paper{ On the Caffarelli-Kohn-Nirenberg inequalities: sharp constants,
existence (and nonexistence), and symmetry of extremal functions}
\jour{Comm. Pure Appl.
Math.}
\vol{54}
\yr{2001}
\pages{229--258}
\endpaper

\bibitem{CM}
\by{\name{P.}{Cavaliere}  \et\name{Z.}{Mihula}}
\paper{Compactness of Sobolev-type trace operators}
\jour{Nonlinear Anal.}
\vol{183}
\yr{2019}
\pages{43--69}
\endpaper



\bibitem{Cianchi-Duke}
\by{\name{A.}{Cianchi}} \paper{Second-order derivatives and
rearrangements} \jour{Duke \ Math.\ J.} \vol{105} \yr{2000}
\pages{355--385}
\endpaper



\bibitem{Cianchi_AMPA}
\by{\name{A.}{Cianchi}} \paper{Symmetrization and second-order Sobolev inequalities}
\jour{Ann. Mat. Pura Appl.} \vol{183} \yr{2004}
\pages{45--77}
\endpaper

%


\bibitem{CKP}
\by{\name{A.}{Cianchi}, \name{R.}{Kerman} \et\name{L.}{Pick}}
\paper{Boundary trace inequalities and rearrangements} \jour{J. Anal. Math.} \vol{105} \yr{2008} \pages{ 241--265}
\endpaper

\bibitem{CP1998}
\by{\name{A.}{Cianchi}\et\name{L.}{Pick}}
\paper{Sobolev embeddings into $BMO$, $VMO$ and $L_\infty$}
\jour{Ark.\ Mat.}
\yr{1998}
\vol{36}
\pages{317--340}
\endpaper

\bibitem{CP2010}
\by{\name{A.}{Cianchi} \et\name{L.}{Pick}}
\paper{An optimal endpoint trace embedding} \jour{Ann. Inst. Fourier (Grenoble)} \vol{60} \yr{2010} \pages{ 939--951}
\endpaper

\bibitem{CP-Trans}
\by{\name{A.}{Cianchi} \et\name{L.}{Pick}}
\paper{Optimal Sobolev trace embeddings} \jour{Trans. Amer. Math. Soc.}
\vol{368} \yr{2016} \pages{8349--8382}
\endpaper

\bibitem{CPS}
\by{\name{A.}{Cianchi},
\name{L.}{Pick}\et\name{L.}{Slav\'{\i}kov\'a}} \paper{Higher-order
Sobolev  embeddings and isoperimetric inequalities} \jour{Adv.\
Math.} \vol{273} \yr{2015} \pages{568--650}
\endpaper

\bibitem{CPS_appl}
\by{\name{A.}{Cianchi},
\name{L.}{Pick}\et\name{L.}{Slav\'{\i}kov\'a}} \paper{Sobolev embeddings in Orlicz and Lorentz spaces with measures} \jour{preprint}
\endprep


\bibitem{CR}
\by{\name{A.}{Cianchi} \et
\name{M.}{Randolfi}} \paper{On the modulus of continuity   of weakly differentiable functions} \jour{Indiana Univ. Math. J.} \vol{60} \yr{2011} \pages{1939--1973}
\endpaper

\bibitem{costea-mazya}
\by {\name{S.}{Costea}\et\name{V.G.}{Maz'ya}} \book {Conductor inequalities and criteria for Sobolev-Lorentz two-weight inequalities}
\publ{(English summary) Sobolev spaces in mathematics. II, 103–121,
Int. Math. Ser. (N. Y.), 9, Springer, New York, 2009}
\endbook


\bibitem{CR-1}
\by {\name{G.P.}{Curbera}\et\name{W.J.}{Ricker}}
\paper{Can optimal rearrangement invariant Sobolev imbeddings be further extended?}
\jour{Indiana Univ. Math. J.}
\vol{56}
\yr{2007}
\pages{1479--1497}
\endpaper

\bibitem{CR-2}
\by {\name{G.P.}{Curbera}\et\name{W.J.}{Ricker}}
\paper{Optimal domains for the kernel operator associated with Sobolev's inequality}
\jour{Studia Math.}
\vol{158}
\yr{2003}
\pages{131--152}
\endpaper




\bibitem{DS}
\by {\name{R.A.}{De Vore}\et\name{K.}{Scherer}}
\paper{Interpolation of linear operators on Sobolev spaces}
\jour{Ann.\ of Math.}
\vol{109}
\yr{1979}
\pages{583--599}
\endpaper



\bibitem{DEFT}
\by {\name{J.}{Dolbeault}, \name{M.J.}{Esteban} \name{S.}{Filippas} \et\name{S.}{Tertikas}}
\paper{Rigidity results with applications to
best constants and symmetry of Caffarelli-Kohn-Nirenberg and logarithmic Hardy inequalities}
\jour{Calc. Var. Partial Differ. Equations}
\vol{54}
\yr{2015}
\pages{2465--2481}
\endpaper



\bibitem{DEL}
\by {\name{J.}{Dolbeault}, \name{M.J.}{Esteban}\et\name{M.}{Loss}}
\paper{Rigidity versus symmetry breaking via nonlinear flows on cylinders and Euclidean spaces}
\jour{Invent. Math.}
\vol{206}
\yr{2016}
\pages{397--440}
\endpaper
%
%

\bibitem{EOP}
\by{\name{W.D.}{Evans}, \name{B.}{Opic}\et\name{L.}{Pick}}
\paper{Interpolation of integral operators on scales of
generalized Lorentz--Zygmund spaces}
\jour{Math.\ Nachr.}
\yr{1996}
\vol{182}
\pages{127--181}
\endpaper

\bibitem{FF}
\by{\name{H.}{Federer}\et \name{W.}{Fleming}} \paper{Normal and
integral currents} \jour{Annals of Math.} \vol{72} \yr{1960}
\pages{458--520}
\endpaper

\bibitem{FS}
\by{\name{V.}{Felli}\et \name{M.}{Schneider}} \paper{Perturbation results of critical elliptic equations of
Caffarelli-Kohn-Nirenberg type} \jour{J. Differ. Equations} \vol{191} \yr{2003}
\pages{121--142}
\endpaper


\bibitem{GMNO}
\by{\name{A.}{Gogatishvili}, \name{S.}{Moura}, \name{J.}{Neves}\et\name{B.}{Opic}}
\paper{Embeddings of Sobolev-type spaces into generalized H\"older spaces involving k-modulus of smoothness}
\jour{Ann. Mat. Pura Appl.}
\vol{194}
\yr{2015}
\pages{425--450}
\endpaper

\bibitem{GNO}
\by{\name{A.}{Gogatishvili}, \name{J.}{Neves}\et\name{B.}{Opic}}
\paper{Characterization of embeddings of
Sobolev-type spaces into generalized H\"older spaces defined by $L^p$-modulus of
smoothness}
\jour{J. Funct. Anal.}
\vol{276}
\yr{2019}
\pages{636--657}
\endpaper

\bibitem{Holik}
\by{\name{M.}{Hol\'ik}}
\paper{Reduction theorems for Sobolev embeddings into the spaces of H\"older, Morrey and Campanato type}
\jour{Math.\ Nachr.}
\vol{289}
\yr{2016}
\pages{1626--1635}
\endpaper


\bibitem{H}
\by{\name{T.}{Holmstedt}}
\paper{Interpolation of quasi-normed spaces}
\jour{Math.\ Scand.}
\vol{26}
\yr{1970}
\pages{177--199}
\endpaper

\bibitem{KP}
\by{{R. }{Kerman} \et\name{L.}{Pick}}
\paper{Optimal Sobolev embeddings} \jour{Forum Math.} \vol{18} \yr{2006} \pages{535--570}
\endpaper

\bibitem{KK}
\by{\name{J.}{Kristensen} \et\name{M. V.}{Korobkov}}
\paper{The trace theorem, the Luzin N - and Morse-Sard properties for the sharp case of Sobolev-Lorentz mappings}
\jour{J. Geom. Anal.}
\vol{28} \yr{2018} \pages{2834--2856}
\endpaper

\bibitem{MP}
\by{\name{J.}{Mal\'y} \et\name{L.}{Pick}}
\paper{An elementary proof of sharp Sobolev embeddings} \jour{Proc. Amer. Math. Soc.} \vol{130} \yr{2002} \pages{no. 2, 555--563}
\endpaper

\bibitem{MaPe}
\by{\name{L.}{Maligranda} \et\name{L.-E.}{Persson}}
\paper{Generalized duality of some Banach function spaces} \jour{Nederl. Akad. Wetensch.
Indag. Math.} \vol{51} \yr{1989} \pages{323--338}
\endpaper



\bibitem{Ma1960}
\by {\name{V.G.}{Maz'ya}} \paper {Classes of regions and imbedding
theorems for function spaces} \jour {Dokl. Akad. Nauk. SSSR}
\vol{133} \yr{1960} \pages{527--530 (Russian); English translation:
Soviet Math. Dokl. {\bf 1} (1960), 882--885}
\endpaper


\bibitem{Ma1961}
\by {\name{V.G.}{Maz'ya}} \paper {On p-conductivity and theorems on
embedding certain functional spaces into a C-space} \jour {Dokl.
Akad. Nauk. SSSR} \vol{140} \yr{1961} \pages{299--302 (Russian);
English translation: Soviet Math. Dokl. {\bf 3} (1962)}
\endpaper

\bibitem{Mazya543}
\by {\name{V.G.}{Maz'ya}} \book {Certain integral inequalities for functions of many variables}
\publ{Problems in Mathematical Analysis 3 LGU, Leningrad, 1972, 33--68 (in Russian). English translation: J. Sov. Math. 1 (1973), 205--234}
\endbook

\bibitem{Mazya548}
\by {\name{V.G.}{Maz'ya}} \book {Capacity-estimates for ``fractional'' norms}
\publ{Zap. Nauchn. Semin. Leningr. Otd. Mat. Int. Steklova 70 (1977), 161--168 (in Russian). English translation: J. Sov.\ Math.\ 23 (1983), 1997--2003}
\endbook


\bibitem{Mabook}
\by{\name{V.G.}{Maz'ya}} \book{Sobolev spaces with applications to
elliptic partial differential equations} \publ{Springer, Berlin,
2011}
\endbook

\bibitem{Moser}
\by {\name{J.}{Moser}} \paper {A sharp form of an inequality by
Trudinger} \jour {Indiana Univ. Math. J.} \vol{20} \yr{1971}
\pages{1077--1092}
\endpaper

\bibitem{Oneil}
\by{\name{R.}{O'Neil}}
\paper{Convolution operators and $L(p,q)$ spaces}
\jour{Duke Math.\ J.}
\vol{30}
\yr{1963}
\pages{129--142}
\endpaper


\bibitem{glz}
\by{\name{B.}{Opic}\et\name{L.}{Pick}}
\paper{On generalized Lorentz-Zygmund spaces}
\jour{Math.\ Ineq. Appl.}
\yr{1999}
\vol{2}
\pages{391--467}
\endpaper

\bibitem{Peetre}
\by{\name{J.}{Peetre}}
\paper{Espaces d'interpolation et th\'eor\`eme de Soboleff}
\jour{Ann.\ Inst.\ Fourier}
\vol{16}
\yr{1966}
\pages{279--317}
\endpaper

\bibitem{PKJF}
\by{\name{L.}{Pick}, \name{A.}{Kufner}, \name{O.}{John} \et \name{S.}{Fu\v c\'{\i}k}}
\book{Function Spaces, Volume 1}
\publ{2nd Revised and Extended Edition, De Gruyter Series in Nonlinear Analysis and Applications 14, De Gruyter, Berlin 2013}
\endbook

\bibitem{sawyer}
\by{\name{E.}{Sawyer}} \paper{A two weight weak type inequality for fractional integrals}
\jour{Trans. Amer. Math. Soc.}
\vol{281, 1}
\yr{1984}
\pages{339--345}
\endpaper

\bibitem{Sa}
\by {\name{E.}{Sawyer}} \paper {Boundedness of classical operators
on classical Lorentz spaces} \jour {Studia Math.} \vol{96}
\yr{1990} \pages{145--158}
\endpaper

\bibitem{Slav}
\by{\name{L.}{Slav\'{\i}kov\'a}}
\paper{Compactness of higher-order Sobolev embeddings}
\jour{Publ. Mat.}
\vol{59}
\yr{2015}
\pages{373--448}
\endpaper




\bibitem{stein}
\by{\name{E.}{Stein}} \book{Singular integrals and differentiability properties of functions}
\publ{Princeton University Press, Princeton, N.J. 1970}
\endbook

\bibitem{Stein}
\by{\name{E.}{Stein}} \paper { Editor's note: the differentiability of functions in $\mathbb R^{n}$.} \jour {Ann. of Math.} \vol{113}
\yr{1981} \pages{383–-385}
\endpaper

\bibitem{Ta}
\by {\name{G.}{Talenti}} \paper {Best constant in Sobolev
inequality} \jour {Ann. Mat. Pura Appl.} \vol{110} \yr{1976}
\pages{353--372}
\endpaper


\bibitem{hanicka}
\by {\name{H.}{Tur\v cinov\'a}} \paper {Basic functional properties of certain
scale of rearrangement-invariant spaces} \jour {Preprint, Prague, 2019}
\endprep


\end{thebibliography}
\end{document}